\documentclass[a4paper,12pt]{article}

\usepackage{amsfonts,graphicx,amsmath,amssymb,amsthm,color,nicefrac,enumerate, layout, bbm, listings,  hyperref}
\usepackage[latin1]{inputenc}
\usepackage[numbers]{natbib}
\usepackage{bbm}

\usepackage{geometry}
\geometry{a4paper,top=15mm,left=18.0mm,right=18.0mm,bottom=18mm,headsep=6mm,footskip=8mm}

\newtheorem{theorem}{Theorem}[section]
\newtheorem{prop}[theorem]{Proposition}
\newtheorem{lemma}[theorem]{Lemma}
\newtheorem{cor}[theorem]{Corollary}
\newtheorem{sett}[theorem]{Setting}

\newcommand{\defeq}{\curvearrowleft}

\newcommand{\1}{\ensuremath{\mathbbm{1}}}
\newcommand{\eps}{\varepsilon}
\newcommand{\R}{\mathbb{R}}
\newcommand{\Q}{\mathbb{Q}}
\newcommand{\N}{\mathbb{N}}
\newcommand{\E}{\mathbb{E}}
\renewcommand{\P}{\mathbb{P}}

\newcommand{\HS}{\operatorname{HS}}
\newcommand{\constFun}{\phi}

\newcommand{\F}[0]{\ensuremath{\mathcal{F}}}

\newcommand{\smallsum}{{\textstyle\sum}}

\allowdisplaybreaks

\title{On moments and strong local H\"older regularity\\
of solutions of stochastic differential equations\\
and
of their spatial derivative processes} 
\author{Anselm Hudde$^1$, Martin Hutzenthaler$^1$,
and Sara Mazzonetto$^{2}$ 
\bigskip
\\
\small{$^1$Faculty of Mathematics, University of Duisburg-Essen,
Germany}
\\
\small{$^2$Universit\'e de Lorraine, CNRS, Inria, IECL, F-54000 Nancy, France}
\smallskip
}

\begin{document}

\maketitle
\makeatletter
\let\@Xmakefnmark\@makefnmark
\let\@Xthefnmark\@thefnmark
\let\@makefnmark\relax
\let\@thefnmark\relax
\@footnotetext{\emph{Key words and phrases:}
strong completeness, strong local H\"older continuity, differentiable solution.}
\@footnotetext{\emph{AMS 2010 subject classification}: 60H10} 
\let\@makefnmark\@Xmakefnmark
\let\@thefnmark\@Xthefnmark
\makeatother

\maketitle

\begin{abstract}
  Spatial differentiability of solutions of stochastic differential
  equations (SDEs)
  is a classical question in stochastic analysis.
  The case of coefficients with globally Lipschitz continuous
  derivatives is well understood in the literature.
  Counterexamples with smooth and bounded coefficients
  demonstrate that the non-globally Lipschitz case
  is more subtle.
  In this article we establish conditions, including a suitable local monotonicity property, which provide existence of continuously differentiable solutions of SDEs, moment estimates and strong local H\"older regularity.  
\end{abstract}

\tableofcontents


\section{Introduction}
For a number of tools
such as Taylor expansions,
the
It\^o-Alekseev-Gr\"obner formula
in \cite{HuddeHutzenthalerJentzenMazzonetto2018},
or the backward It\^o-Ventzell formula
in \cite{DelMoralSingh2019}
it is convenient to have a solution
of a stochastic differentiable equation (SDE)
which is continuously differentiable in the starting point.

In the literature it is well-known that
spatially differentiable solutions of SDEs exist
if the first derivatives of the coefficient functions exist
and are globally H\"older continuous;
see, e.g., \cite[Theorem 4.6.5]{Kunita1990}.
The classical approach is to prove with
a Kolmogorov-Chentsov continuity argument
that difference
quotients have continuous versions and to infer
from this the existence of differentiable solutions.
We note that the proof of 
\cite[Theorem 4.6.5]{Kunita1990} leaves the gap
that 
\cite[Theorem 1.4.1]{Kunita1990} does not directly guarantee
existence of a continuous extension of the difference quotient at $h=0$ since $\R^d\times(\R\setminus\{0\})$
is not a domain
(Proposition \ref{p:KolChen} below closes this gap)
and
the proof of 
\cite[Theorem 4.6.5]{Kunita1990} does not explain
how to get a differentiable solution
from a continuous version of difference quotients
(Lemma \ref{lem:gradient} below closes this gap).
Moreover, we note that the conditions
of
\cite[Theorem 1.4.1]{Kunita1990} are rarely satisfied
by SDEs from applications; cf.\ examples
in \cite[Chapter 4]{CoxHutzenthalerJentzen2013v3}.

In the non-globally Lipschitz case one might also
hope for differentiable solutions if the coefficient
functions are smooth.
However, this is not the case.
Example 2.6 in \cite{LiScheutzow2011} shows
that there exist SDEs with smooth and bounded 
coefficients for which there exists no solution
which is continuous in the starting point.
In particular, SDEs with smooth coefficients do
not necessarily have differentiable solutions.

The main contribution of this article is to derive conditions which allow at the same time to ensure existence of continuously differentiable solutions of order $0$, $1$, or $2$,
to bound moments and to establish strong local H\"older regularity.
Our central assumption is the local monotonicity-type
assumption \eqref{eq:intro} below
which seems to be new.
Roughly speaking,
this condition requires $\mu'$ and $\|\sigma'\|$
to be bounded from above in a suitable sense by
a Lyapunov-type function
where $\mu$ is the drift coefficient and
$\sigma$ is the diffusion coefficient of the SDE.
The following theorem, Theorem~\ref{thm:intro},
illustrates our main results and is
a special case of Corollary~\ref{c:C2} below
which in turn is derived from 
Theorem \ref{thm:C2} below.
The proof of 
Theorem~\ref{thm:intro} is therefore omitted.
We note that Theorem \ref{thm:intro}
and Theorem \ref{thm:exists:C0} below
in particular imply \emph{strong completeness} of the SDE.


\begin{theorem}[Existence of a $C^2$-solution]\label{thm:intro}
  Let $d\in\N$,
  $T,c,\alpha\in(0,\infty)$, $p\in(6(d+3)(1+1/c)^3,\infty)$,
  let $\|\cdot\|$, $\langle,\rangle$ denote the standard norm and the standard scalar product on $\R^d$,
  let $\|\cdot\|_{\textup{F}}$ denote the Frobenius norm on $\R^{d\times d}$,
  let $(\Omega, \F, \P, (\mathbb{F}_{t})_{t\in [0,T]})$ be a filtered probability space
  satisfying the usual conditions\footnote{'usual conditions' means that for all $t\in[0,T]$ it holds
that $\{A\in\mathcal{F}\colon\P(A)=0\}\subseteq\mathbb{F}_t=\cap_{s\in(t,T]\cup\{T\}}\mathbb{F}_s$.},
  let 
  $
    W\colon[0,T]\times\Omega\to\R^d
  $
  be a standard $(\mathbb{F}_t)_{t\in[0,T]}$-Wiener process,
  let $ \mu \in C^3(\R^d,\R^d)$, 
  $ \sigma \in C^3(\R^d, \R^{d\times d})$,
  $ 
    V  \in C^{ 2 }( \R^d , [0,\infty) ) 
  $,
  $ \bar{V}\in C(\R^d,[0,\infty) )$,
  assume that $\mu'''$ and $\sigma'''$ grow at most polynomially at infinity,
  assume that $\exists \gamma\in(0,\infty)\colon \sup_{x\in\R^d}\|x\|^{\gamma}/(1+V(x))<\infty$,
  and assume for all $x,y,v\in \R^d$ that
  {\small
  \begin{align}
    &\nonumber\Big\langle v,\smallint_0^1\mu'\big(\lambda (x-y)+y\big)\,d\lambda \,\,v
    \Big\rangle
    +\tfrac{p-1}{2}
    \Big\|\smallint_0^1\sigma'\big(\lambda (x-y)+y\big)\,d\lambda \,\,v
    \Big\|_{\textup{F}}^2
  \leq \|v\|^2\cdot\Big(c
    +\tfrac{V(x)+V(y)}{4c p T e^{\alpha T}}
    +\tfrac{\bar{V}(x)+\bar{V}(y)}{4c p e^{\alpha T}}
    \Big),
\\
    &\Big\langle
  \mu( x )
  ,
  (\nabla V)(x)
  \Big\rangle
  +
  \tfrac{ 1 }{ 2 }
  \operatorname{trace}\!\Big(
    \sigma(x) [\sigma(x)]^* 
    ( \operatorname{Hess} V )( x )
  \Big)
  +
  \tfrac{ 
    1
  }{ 
    2 
  }
    \|
      \sigma( x )^* ( \nabla V )( x )
    \|^2
  +
  \bar{V}(x)
\leq
  \alpha V(x)
  +
  c.
  \label{eq:intro}
  \end{align}}
Then there exists a measurable
function
$
  X \colon \{(s,t)\in[0,T]\colon s\leq t\} \times \R^d \times \Omega \to \R^d
$
such that
\begin{enumerate}[(i)]
\item
for every $ \omega \in \Omega $
it holds
that
$
  X(\omega) \in C^{0,2}( \{(s,t)\in[0,T]\colon s\leq t\} \times \R^d, \R^d)
$ and
  \item  
for all 
$
  x \in \R^d 
$,
$ 
	s\in [0,T]
$,
$t\in[s,T]$
it holds a.s.~that 
\begin{equation}  \begin{split}
  X^x_{s,t} = 
  x
  + \int_s^{ t } \mu(X^x_{s,r} ) \, dr
  +
  \int_s^{ t } \sigma(X^x_{s,r} ) \, dW_r.
\end{split}     \end{equation}
\end{enumerate}
\end{theorem}

The paper is organized as follows.
In Section \ref{sec:2} we provide preliminary results used for the proofs of the main results.
In Sections \ref{sec:3}, \ref{sec:4}, and \ref{sec:5} we establish 
existence of
 $C^0$, $C^1$, and $C^2$ solutions respectively of SDEs using suitable strong local H\"older estimates
 obtained in Subsections \ref{sec:3.1}, \ref{sec:4.1}, and \ref{sec:5.1}.

\section{Preliminary results}\label{sec:2}

In Subsection~\ref{ssec:inferring} we prove that if there exists a continuous version
of difference quotients, then there exists a differentiable version.
Moreover, we recall for the convenience of the reader results from the literature
which are used in our proofs of existence of solutions of SDEs which are differentiable in
the initial value.
More precisely, we will use in our proofs
a local Kolmogorov-Chentsov continuity theorem (see Subsection~\ref{ssec:KolmogorovChentsov} below),
a stochastic Gronwall lemma (see Subsection~\ref{ssec:Gronwall} below),
and
exponential moment estimates (see Subsection~\ref{ssec:moments} below).

\subsection{Inferring a differentiable version from continuity of difference quotients}\label{ssec:inferring}
The following lemma shows, informally speaking, that if the
difference quotient of a continuous 
random field
has a continuous version, then the random field is differentiable
almost surely.
\begin{lemma}[Continuity of the difference quotient implies differentiability] \label{lem:gradient}
  Let $(\Omega,\mathcal{F},\P)$ be a probability space,
  let $(H,\langle \cdot,\cdot\rangle_H,\|\cdot\|_H)$ and 
  $(U,\langle \cdot,\cdot\rangle_U,\|\cdot\|_U)$ be
  separable $\R$-Hilbert spaces,
  assume that $H$ is finite-dimensional,
  let $\mathbb{H}\subseteq H$ be an orthonormal basis of $H$,
  let $O \subseteq H$ be an open subset,
  let $D \subseteq O$ be a countable dense subset,
  let $(T,\mathcal{T})$ be a topological space,
  let $\mathbb{T}\subseteq T$ be a countable dense subset,
  let $\mathcal{O}\subseteq H\times\R$ be the set
  $\mathcal{O}=\cap_{h\in\mathbb{H}}\{ (x,p) \in O \times \mathbb{\R}
  \colon x + h p \in O \} $,
  and 
  let $\mathcal{X}\colon T\times O\times\Omega\to U$
  and $\mathcal{Z}\colon  T\times
  \mathcal{O}
  \times H \times\Omega\to U$
  be random fields satisfying 
  that
  for all $(x,p)\in \mathcal{O}\cap\big(D\times(\Q\setminus\{0\})\big)$,
  $h\in\mathbb{H}$, $t\in\mathbb{T}$
  it holds a.s.~that
  \begin{equation}  \label{lem:gradient:ZX}
  \begin{split}
    \mathcal{Z}_t(x,p,h)=\tfrac{\mathcal{X}_t^{x+p h}-\mathcal{X}_t^x}{p}
  \end{split}     
  \end{equation}
  and satisfying that
  for
  all $h\in\mathbb{H}$
  and
  almost all $\omega\in\Omega$
  it holds that
  the functions
  $T\times O\ni (t,x)\mapsto \mathcal{X}_t^x(\omega)\in U$
  and 
  $T\times \mathcal{O}
  \ni (t,x,p)
  \mapsto
  \mathcal{Z}_t(x,p,h,\omega) \in U$
  are continuous.
  Then there exists a set ${\Omega_0}\in\mathcal{F}$ such that
  \begin{enumerate}[(i)]
  \item \label{item:gradient:1}
  $\P({\Omega_0})=1$, and
  \item \label{item:gradient:3}
  it holds for all $\omega\in{\Omega_0}$ and all $t\in T$ that the mapping
    $O\ni x\mapsto \mathcal{X}_t^x(\omega)\in U$ is continuously
    differentiable
    and it holds for all $\omega\in\Omega_0$, $t\in T$, $x\in O$, $v\in H$
    that $\frac{\partial }{\partial  x} \mathcal{X}_t^x(\omega)v
    =\sum_{h\in\mathbb{H}}\langle v,h\rangle_H \mathcal{Z}_t(x,0,h,\omega)$.
  \end{enumerate}
\end{lemma}
\begin{proof}[Proof of Lemma~\ref{lem:gradient}]
  Without loss of generality we assume that $H\neq \{0\}$ and that $T\neq \emptyset$.
  Throughout this proof let $d\in\N$ denote the dimension of $H$
  and let 
  $h_1,\ldots,h_d \in H$ satisfy that
  $\{h_1,\ldots,h_d\}=\mathbb{H}$.
  By assumption there exists a set $\Omega_1\in\mathcal{F}$ satisfying that $\P(\Omega_1)=1$
  and that for all $\omega\in\Omega_1$, $h\in\mathbb{H}$ it holds that
  the functions
  $T\times O\ni (t,x)\mapsto \mathcal{X}_t^x(\omega)\in U$
  and 
  $T\times \mathcal{O}
  \ni (t,x,p)\mapsto \mathcal{Z}_t(x,p,h,\omega)
  \in U$
  are continuous.
  Let $\Omega_0\subseteq\Omega$ be the set satisfying that
  \begin{equation}  \label{lem:gradient:Omega1}
  \begin{split}
   {\Omega_0} =  \Omega_1 \cap \bigcap_{(x,p)\in  \mathcal{O}\cap (D\times\Q)}\bigcap_{h\in\mathbb{H}}
    \bigcap_{t\in \mathbb{T}}
    \Big\{\omega\in\Omega\colon \mathcal{X}_t^{x+p h}(\omega)-\mathcal{X}_t^x(\omega)
     =p\mathcal{Z}_t({x,p,h,\omega})
    \Big\}.
  \end{split}
  \end{equation}
  The fact that $\mathcal{X},\mathcal{Z}$ are random fields,
  the fact that for all 
  $(x,p) \in \mathcal{O}\cap (D\times\Q)$, $h\in\mathbb{H}$, $t\in\mathbb{T}$
  it holds a.s.~that
  $p\mathcal{Z}_t(x,p,h) =\mathcal{X}_t^{x+p h}-\mathcal{X}_t^x$,
  the fact that $D\times\Q\times\mathbb{H}\times \mathbb{T}$
  is a countable set,
  the fact that $\Omega_1\in\mathcal{F}$,
  and the fact that $\P(\Omega_1)=1$
  imply that ${\Omega_0}\in\mathcal{F}$ and that $\P({\Omega_0})=1$.
  This proves item~\eqref{item:gradient:1}.

Next, we prove item~\eqref{item:gradient:3}. 
  For this we first
  observe that density of $\mathbb{T}$ in $T$
  and the fact that for all $h\in\mathbb{H}$, $\omega\in\Omega_1$
  the functions
  $T\times O\ni (t,x)\mapsto \mathcal{X}_t^x(\omega)\in U$
  and 
  $T\times \mathcal{O}\ni (t,x,p)\mapsto \mathcal{Z}_t(x,p,h,\omega) \in U$
  are continuous
  imply that
  \begin{equation}  \label{lem:gradient:Omega1b}
  \begin{split}
   {\Omega_0} =  \Omega_1 \cap \bigcap_{(x,p)\in  \mathcal{O}}\bigcap_{h\in\mathbb{H}}
    \bigcap_{t\in T}
    \Big\{\omega\in\Omega\colon \mathcal{X}_t^{x+p h}(\omega)-\mathcal{X}^x(\omega)
     =p\mathcal{Z}_t({x,p,h,\omega})
    \Big\}.
  \end{split}
  \end{equation}
For the rest of the proof, 
 	let $\omega\in{\Omega_0}$, $t\in T$, $x\in O$, and $r\in(0,\infty)$ with $\{y\in H\colon\|y-x\|<2r\}\subseteq O$ be fixed.
  %
  Now for all $v\in H$, $i\in\{1,\ldots,d\}$ with $\|v\|_{H}<r$ it holds that
  \begin{equation}  \begin{split}
  \big\|x+\sum_{j=1}^{i-1}\langle v,h_j\rangle_{H}h_j-x\big\|_H^2+|\langle v,h_i\rangle_{H}|^2
  =\sum_{j=1}^{i}\left(\langle v,h_j\rangle_{H}\right)^2\leq \|v\|_{H}^2<  r^2.
  \end{split}     \end{equation}
  This, a telescoping sum, the fact that $\omega\in\Omega_0$,
  and~\eqref{lem:gradient:Omega1b}
  yield that for all $v\in H$ with $\|v\|_H<r$ it holds that
  \begin{equation}  \begin{split}
     &\mathcal{X}_t^{x+v}(\omega)-\mathcal{X}_t^x(\omega)
    -\sum_{h\in\mathbb{H}}\langle v,h\rangle_H \mathcal{Z}_t(x,0,h,\omega)
     \\&=
     \sum_{i=1}^{d}\left(
     \mathcal{X}_t^{x+\sum_{j=1}^i\langle v,h_j\rangle_{H}h_j}(\omega)
    -\mathcal{X}_t^{x+\sum_{j=1}^{i-1}\langle v,h_j\rangle_{H}h_j}(\omega)
    \right)
    -\sum_{i=1}^{d}\langle v,h_i\rangle_H \mathcal{Z}_t(x,0,h_i,\omega)
     \\&=
     \sum_{i=1}^{d}
     \langle v,h_i\rangle_{H}
     \mathcal{Z}_t\Big(x+\smallsum_{j=1}^{i-1}\langle v,h_j\rangle_{H}h_j,\langle v,h_i\rangle_{H},h_i,\omega\Big)
     -
     \sum_{i=1}^{d}
     \langle v,h_i\rangle_{H}
     \mathcal{Z}_t(x,0,h_i,\omega).
  \end{split}     \end{equation}
  This, the triangle inequality, and the Cauchy-Schwarz inequality show that for all $v\in H$ with $\|v\|_{H}\in(0,r)$ it holds that
  \begin{equation}  \begin{split}
     &\tfrac{\left\|\mathcal{X}_t^{x+v}(\omega)-\mathcal{X}_t^x(\omega)
    -\sum_{h\in\mathbb{H}}\langle v,h\rangle_H \mathcal{Z}_t(x,0,h,\omega)
     \right\|_{U}}{\|v\|_{H}}
     \\&=
     \tfrac{1}{\|v\|_{H}}
     \left\|\sum_{i=1}^{d}\langle v,h_i\rangle_{H}
     \bigg(
     \mathcal{Z}_t\Big(x+\smallsum_{j=1}^{i-1}\langle v,h_j\rangle_{H}h_j,\langle v,h_i\rangle_{H},h_i,\omega\Big)
     -
     \mathcal{Z}_t(x,0,h_i,\omega)
    \bigg)
    \right\|_{U}
     \\&\leq
     \tfrac{\|v\|_{H}}{\|v\|_{H}}
     \left(
     \sum_{i=1}^{d}
     \left\|\mathcal{Z}_t\Big(x+\smallsum_{j=1}^{i-1}\langle v,h_j\rangle_{H}h_j,\langle v,h_i\rangle_{H},h_i,\omega\Big)
     -
     \mathcal{Z}_t(x,0,h_i,\omega)
    \right\|_{U}^2\right)^{\!\!\!\nicefrac{1}{2}}.
  \end{split}     \end{equation}
  This, the fact that $H$ is finite-dimensional, and the fact that for all $h\in\mathbb{H}$
  the function
  $\mathcal{O}\ni (y,p)\mapsto \mathcal{Z}(y,p,h,\omega) \in U$
  is continuous in $(x,0)$ yield that
  \begin{equation}  \begin{split}
    \lim_{H\ni v\to 0}\tfrac{\left\|\mathcal{X}_t^{x+v}(\omega)-\mathcal{X}_t^x(\omega)
    -\sum_{h\in\mathbb{H}}\langle v,h\rangle_H \mathcal{Z}_t(x,0,h,\omega)
      \right\|_{U}}{\|v\|_{H}}=0.
  \end{split}     \end{equation}
This together with continuity of the function
  $O\ni y\mapsto (H\ni v\mapsto
  \sum_{h\in\mathbb{H}}\langle v,h\rangle_H \mathcal{Z}_t(y,0,h,\omega)
  \in U) \in L(H,U)$
proves item~\eqref{item:gradient:3} and thus completes the proof of Lemma~\ref{lem:gradient}.
\end{proof}

\subsection{A local Komogorov-Chentsov continuity theorem}\label{ssec:KolmogorovChentsov}

For the convenience of the reader the following proposition, Proposition~\ref{p:KolChen},
reformulates part of
Corollary 3.12 in~\cite{CoxHutzenthalerJentzen2013v3}
which is a version of the
Kolmogorov-Chentsov continuity theorem.
\begin{prop}[A local Komogorov-Chentsov continuity theorem]\label{p:KolChen}
Let
$(\Omega, \mathcal{F}, \P)$ be a probability space,  
let $( H, \left< \cdot , \cdot \right>_H, \left\| \cdot \right\|_H )$
be a finite-dimensional $\R$-Hilbert space,
let $ D  \subseteq H$ be a set,
let $ ( E, \left\| \cdot \right\|_E )$ be a separable $\R$-Banach space,
let $ F \subseteq E $ be a closed subset,
let
$ p \in (\dim(H),\infty) $,  
$ 
  \alpha \in ( \frac{ \dim(H) }{ p } , 1]
$,
and let 
$ 
  X \colon D \times\Omega\to E
$
be a random field which satisfies for all $n\in\N$, $z\in D$ that
$\P(X(z)\in F)=1$, $\E[\|X(z)\|_E^p]<\infty$, and
that 
\begin{equation}  \begin{split}\label{eq:locallyHoelder}
  \sup\Big(&
  \Big\{\tfrac{\left(\E\left[\|X(x)-X(y)\|_E^p\right]\right)^{\frac{1}{p}}}
  {\|x-y\|_H^{\alpha }}
  \colon x,y\in D, \|x\|_H\leq n, \|y\|_H\leq n,x\neq y \Big\}
  \cup\{0\}
  \Big)
  <\infty.
\end{split}     \end{equation}
Then there exists 
a measurable function 
$
  \mathcal{X} \colon \overline{D} \times \Omega \to F
$
which satisfies
\begin{enumerate}[(i)]
  \item 
that for all $\omega\in\Omega$ it holds that
the function
$\overline{D}\ni x\mapsto \mathcal{X}(x,\omega)\in F$ is continuous
and
\item that for all $x\in D$ it holds a.s.~that
$
  \mathcal{X}(x) = X(x)
$.
\end{enumerate}
\end{prop}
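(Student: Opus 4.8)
The plan is to follow the classical Kolmogorov–Chentsov strategy, but localised over an exhausting sequence of bounded open balls so that the finiteness assumption \eqref{eq:locallyHoelder} can be used on each ball separately, and then to glue the resulting local continuous versions into one global version on $\overline{D}$.

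First I would fix for each $n\in\N$ the bounded set $D_n := \{x\in D\colon \|x\|_H \le n\}$ and note that on $D_n$ the hypothesis \eqref{eq:locallyHoelder} gives a uniform moment bound and a uniform $\alpha$-Hölder bound in the $L^p$-sense, with a finite constant $c_n\in[0,\infty)$. On the bounded metric space $(D_n,\|\cdot\|_H)$ I would invoke the standard Kolmogorov–Chentsov theorem for Banach-space-valued (equivalently, $F$-valued, using that $F$ is closed) random fields indexed by a subset of a finite-dimensional Hilbert space: since $p>\dim(H)$ and $\alpha > \dim(H)/p$, there is $\theta\in(0,\alpha - \dim(H)/p)$ and a modification $\mathcal{X}^{(n)}\colon \overline{D_n}\times\Omega\to F$ that is jointly measurable, has $\theta$-Hölder-continuous sample paths on $\overline{D_n}$, and satisfies $\mathcal{X}^{(n)}(x)=X(x)$ $\P$-a.s.\ for all $x\in D_n$. (This is exactly Proposition~4.5 in \cite{HHM2019OneSided} being quoted; if one prefers a self-contained argument, the chaining estimate over dyadic grids in $\overline{D_n}$ produces the modulus of continuity, and the fact that $\overline{D_n}$ is compact lets one pass from a dense dyadic set to all of $\overline{D_n}$.)

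Next I would reconcile the different modifications. For $m<n$ both $\mathcal{X}^{(m)}$ and $\mathcal{X}^{(n)}$ agree $\P$-a.s.\ with $X$ on the countable-dense-determining set, hence on a set $\Omega_{m,n}$ of full measure their continuous extensions coincide on $\overline{D_m}$; intersecting over the countably many pairs $(m,n)$ gives a single $\Omega_0\in\mathcal F$ with $\P(\Omega_0)=1$ on which all the $\mathcal X^{(n)}$ are mutually consistent. On $\Omega_0$ I define $\mathcal X(x,\omega)$, for $x\in\overline D$, by taking any $n$ with $x\in\overline{D_n}$ and setting $\mathcal X(x,\omega):=\mathcal X^{(n)}(x,\omega)$; this is well defined because $\overline{D}=\bigcup_n \overline{D_n}$ (as $\overline{D_n}\subseteq \overline D$ and any point of $\overline D$ is a limit of points of $D$, all of bounded norm eventually) and because of the consistency just established. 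On the null complement I set $\mathcal X \equiv$ some fixed point of $F$. Continuity of $x\mapsto \mathcal X(x,\omega)$ on $\overline D$ follows because continuity is a local property and near any point $x$ the function coincides with the continuous function $\mathcal X^{(n)}(\cdot,\omega)$ for $n$ large; joint measurability follows since $\mathcal X = \lim_n \mathcal X^{(n)}\one_{\overline{D_n}}$ up to the fixed null set, a countable operation on jointly measurable maps, and $F$ is closed so the limit stays in $F$. Finally, for $x\in D$ we have $x\in D_n$ for some $n$ and $\mathcal X(x)=\mathcal X^{(n)}(x)=X(x)$ $\P$-a.s.

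The main obstacle I anticipate is purely bookkeeping rather than conceptual: making the gluing measurable and the exceptional set genuinely independent of $x$ (not just of the countable index set), which is handled by intersecting over the countably many ball-pairs and using that $\overline D$ is covered by countably many $\overline{D_n}$. One should also be slightly careful that Proposition~4.5 of \cite{HHM2019OneSided} is applied with $D_n$ in place of $D$ and that $\overline{D_n}$ there denotes the closure of $D_n$ in $H$, which is compact since $D_n$ is bounded; the uniform constant from \eqref{eq:locallyHoelder} is exactly what that proposition consumes. Everything else—the chaining, the choice of Hölder exponent, the extension from a dense set—is standard and is subsumed in the cited proposition, so no further work is needed there.
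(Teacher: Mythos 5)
Your strategy is sound, but it does more work than the paper does: in the paper Proposition~\ref{p:KolChen} is not proved at all, it is an exact restatement (for the reader's convenience) of Proposition 4.5 in \cite{HHM2019OneSided}, and since the hypothesis \eqref{eq:locallyHoelder} is already formulated ball-by-ball (for every $n$), that proposition applies verbatim with the given index set $D$, so no decomposition into $D_n$ and no gluing is needed. Relative to the paper your localisation-plus-gluing argument is therefore redundant, and in its primary form it is circular: the ``local Kolmogorov--Chentsov theorem on $\overline{D_n}$'' you invoke is literally the statement being proved, with $D$ replaced by $D_n$. The gluing itself is carried out correctly (consistency of the $\mathcal{X}^{(n)}$ on a common full-measure set via density and continuity, the identity $\overline{D}=\bigcup_{n}\overline{D_n}$, measurability of the glued map, and the trivial handling of the exceptional set). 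The one genuine mathematical caution concerns your self-contained fallback: ``chaining over dyadic grids in $\overline{D_n}$'' is not directly available, because $X$ is defined only on $D$, which may be an arbitrary (non-grid, non-convex) set, so dyadic points need not belong to the domain; one must instead chain within a countable dense subset of $D_n$, or first extend the random field to a grid, and making this work for arbitrary index sets is precisely the nontrivial content of the cited Proposition 4.5 in \cite{HHM2019OneSided}. So either spell out that extension step if you want a proof resting only on the classical cube-indexed Kolmogorov--Chentsov theorem, or accept that the proof collapses to the citation, which is exactly what the paper does.
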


\subsection{A stochastic Gronwall inequality}\label{ssec:Gronwall}

For the convenience of the reader the following proposition,
Proposition \ref{prop:moments:hilbert},
reformulates part of Corollary 2.5 in~\cite{HHM2021}.
It will be used to prove strong local H\"older estimates for difference quotients.
\begin{prop}[A stochastic Gronwall inequality]\label{prop:moments:hilbert}
Let
$( H, \left< \cdot , \cdot \right>_H, \left\| \cdot \right\|_H )$
and
$( U, \left< \cdot , \cdot \right>_U, \left\| \cdot \right\|_U )$
be separable $\R$-Hilbert spaces,
let 
$p\in[2,\infty)$, $T\in(0,\infty)$,
let $(\Omega, \F, \P, (\mathbb{F}_{t})_{t\in [0,T]})$ be a filtered probability space
satisfying the usual conditions,
let $(W_t)_{t \in [0, T]}$
  be an $\textup{Id}_U$-cylindrical $(\mathbb{F}_t)_{t\in[0,T]}$-Wiener process,
let $X,a\colon[0,T]\times\Omega\to H$,
$b\colon[0,T]\times\Omega\to \HS(U,H)$,
$\alpha,\beta \colon [0,T] \times \Omega \to [0,\infty]$
be
measurable and adapted stochastic processes
which satisfy that $X$ has continuous sample paths,
which satisfy that for all $t\in[0,T]$ it holds a.s.\ that
$\int_0^T\|a_s\|_H+\|b_s\|_{\HS(U,H)}^2+|\alpha_s|\,ds<\infty$
and
$X_t=X_0+\int_0^t a_s\,ds +\int_0^tb_s\,dW_s$,
and which satisfy that it holds a.s.\ for Lebesgue-almost all $t\in[0,{T}]$ that
\begin{equation}  \begin{split}\label{eq:lin.growth}
  \langle X_t,a_t\rangle_H+\tfrac{1}{2}\|b_t\|_{\HS(U,H)}^2
  +\tfrac{p-2}{2}\tfrac{\|\langle X_t,b_t\rangle_H\|_{\HS(U,\R)}^2}{\|X_t\|_H^2}
  \leq \alpha_t\|X_t\|_H^2+\tfrac{1}{2}|\beta_t|^2.
\end{split}     \end{equation}
   Then it holds for all $t\in[0,T]$, $q_1,q_2\in(0,\infty]$
   with $\tfrac{1}{q_1}=\tfrac{1}{q_2}+\tfrac{1}{p}$ that
   \begin{equation}  \begin{split}\label{eq:prop:moments:hilbert}
     &\|X_{t}\|_{L^{q_1}(\P;H)}
     \leq
     \bigg\|
     \exp\Big(\smallint_0^{ t } \alpha_u\,du \Big)
     \bigg\|_{L^{q_2}(\P;\R)}
     \left(
     \left\|
     X_0
     \right\|_{L^p(\P;H)}^2
     +\int_0^t
     \big\|
     \beta_s
     \big\|_{L^{p}(\P;\R)}^2\,ds
     \right)^{\frac{1}{2}}.
   \end{split}     \end{equation}
\end{prop}

\subsection{Exponential moment estimates}\label{ssec:moments}
In this subsection we collect two results from the literature which formalize
a Lyapunov-method to derive (exponential) moment estimates.
We will use these estimates to prove condition~\eqref{eq:locallyHoelder}
for suitable difference quotients.
In this subsection we frequently use the following setting.
\begin{sett}\label{sett:integrability}
Let 
  $( H, \left< \cdot , \cdot \right>_H, \left\| \cdot \right\|_H )$
  and $( U, \left< \cdot , \cdot \right>_U, \left\| \cdot \right\|_U )$ be separable $\R$-Hilbert spaces,
  let $T\in [0,\infty)$, $s\in[0,T]$,
let $(\Omega, \F, \P, (\mathbb{F}_{t})_{t\in [0,T]})$ be a filtered probability space
satisfying the usual conditions,
  let 
  $
    W \colon [s,T] \times \Omega \to U
  $
  be an adapted stochastic process such that $(W_{s+t}-W_s)_{t\in[0,T-s]}$ is an $\textup{Id}_U$-cylindrical
  $(\mathbb{F}_{s+t})_{t\in[0,T-s]}$-Wiener process,
  let $O \subseteq H$ be an open set,
  let $\mathcal{O}\in\mathcal{B}(O)$,
  let $ \mu \colon \mathcal{O} \to H $
  and 
  $ \sigma \colon \mathcal{O} \to \HS(U,H)$
  be measurable functions,
  and
  let
  $
    X \colon [s,T] \times \Omega \to \mathcal{O}
  $
  be an adapted stochastic
  process with continuous sample paths
  which satisfies that for all $t\in[s,T]$ it holds a.s.\ that
  $
    \smallint_s^{ T } \| \mu( X_r ) \|_H
    + \| \sigma( X_r ) \|_{\HS(U,H)}^2
    \, dr < \infty
  $
  and
  $
    X_{t } = 
    X_s
    + \smallint_s^{ t } \mu(X_r ) \, dr
    +
    \smallint_s^{ t } \sigma(X_r ) \, dW_r
  $.
\end{sett}

The next result, Lemma~\ref{l:exp_mom}, 
is a slight generalization of
Corollary 2.4 in~\cite{CoxHutzenthalerJentzen2013v3} to arbitrary nonnegative starting times.

\begin{lemma}[Exponential moment estimates]
\label{l:exp_mom}
  Assume Setting~\ref{sett:integrability},
  let $ \alpha,\beta \in \R $, $ V \in C^2( O, \R ) $,
  and let
  $ \bar{V} \colon [s,T] \times \mathcal{O}\to\R $ be a measurable function
  which satisfies that it holds a.s.\ that $\int_s^T| \bar{V}(r, X_r) |\,dr<\infty$
  and that
  it holds
  for all
  $ (t,x)\in \cup_{\omega\in \Omega} \cup_{r\in [s,T]} \{ (r, X_{r}(\omega)) \in [s,T] \times \mathcal{O} \}$
  that 
  \begin{equation}  \begin{split}
  \label{eq:exp_mom_assumption}
  &\Big\langle
  \mu( x )
  ,
  (\nabla V)(x)
  \Big\rangle_H
  +
  \tfrac{ 1 }{ 2 }
  \operatorname{trace}\!\Big(
    \sigma(x) [\sigma(x)]^* 
    ( \operatorname{Hess} V )( x )
  \Big)
    + 
    \tfrac{ 
      1 
    }{ 
      2 e^{ \alpha t } 
    } 
    \left\| 
      \sigma(x)^*
      \left(\nabla V\right)\!(x) 
    \right\|_U^2 
    +
    \bar{V}(t, x )
    \\&
  \leq 
    \alpha V(x) +\beta.
  \end{split}     \end{equation}
  Then
  \begin{equation}
  \label{eq:exp:mom:estimates}
  \begin{split}
  &
    \E\!\left[
      \exp\!\left(
        \tfrac{   
          V( X_{ T } )
        }{ 
          e^{ \alpha T }
        }    
        +
        \smallint_s^{ T }
          \tfrac{ 
            \bar{V}( r,  X_r ) 
          }{ 
            e^{ \alpha r } 
          }
        \, dr
      \right)
    \right]
  \leq
    \E\!\left[\!
      \exp\!\left( 
        \tfrac{V(X_s)}{e^{\alpha s}} 
        +
        \smallint_s^{T}\tfrac{\beta}{e^{\alpha r}}\,dr
      \right)
    \right]
    \in [0,\infty].
  \end{split}
  \end{equation}
\end{lemma}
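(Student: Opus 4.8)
The plan is to apply an It\^o-type argument to the exponential functional, mimicking the standard proof of exponential moment bounds (as in Cox--Hutzenthaler--Jentzen), but now keeping track of the explicit lower starting time $s$ rather than $s=0$. First I would introduce, for each $n\in\N$, the stopping time $\tau_n = \inf\{t\in[s,T]\colon \|X_t\|_H \geq n \text{ or } X_t \notin \mathcal{O} \text{ or } \int_s^t |\bar V(r,X_r)|\,dr \geq n\} \wedge T$, so that on $[s,\tau_n]$ the process stays in a compact subset of $\mathcal O$ and all the relevant integrals are bounded; this localization is what makes the forthcoming It\^o formula rigorous despite $V$ being only $C^2$ without global bounds. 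On this localized picture I would consider the process
\begin{equation}
\label{eq:plan:Yt}
  Y_t := \exp\!\Big( \tfrac{V(X_t)}{e^{\alpha t}} + \smallint_s^{t} \tfrac{\bar V(r,X_r)}{e^{\alpha r}}\,dr - \smallint_s^t \tfrac{\beta}{e^{\alpha r}}\,dr \Big), \qquad t\in[s,T].
\end{equation}

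Next I would compute the It\^o differential of $Y_t$ on $[s,\tau_n]$. Writing $f(t,x) = \exp(e^{-\alpha t}V(x))$ and using the SDE for $X$, the drift part of $d\big(e^{-\alpha t}V(X_t)\big)$ produces exactly $e^{-\alpha t}\big[(\mathcal{G}_{\mu,\sigma}V)(X_t) - \alpha V(X_t)\big]\,dt$, while the quadratic-variation contribution from applying It\^o to $x\mapsto e^{y}$ at $y=e^{-\alpha t}V(X_t)$ generates the term $\tfrac12 e^{-2\alpha t}\|\sigma(X_t)^*(\nabla V)(X_t)\|^2\,dt$. Adding the $\tfrac{\bar V(t,X_t)}{e^{\alpha t}}\,dt - \tfrac{\beta}{e^{\alpha t}}\,dt$ from the explicit integrals in \eqref{eq:plan:Yt}, the total finite-variation rate of $\log Y_t$ is
\begin{equation}
  \tfrac{1}{e^{\alpha t}}\Big[ (\mathcal{G}_{\mu,\sigma}V)(X_t) - \alpha V(X_t) + \tfrac{1}{2e^{\alpha t}}\|\sigma(X_t)^*(\nabla V)(X_t)\|^2 + \bar V(t,X_t) - \beta \Big],
\end{equation}
which by hypothesis \eqref{eq:exp_mom_assumption} is $\leq 0$ for every $(t,X_t(\omega))$. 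Hence $Y_{t\wedge\tau_n}$ is a nonnegative local martingale multiplied by a nonincreasing (finite-variation) factor, so it is a nonnegative supermartingale on $[s,T]$; in particular $\E[Y_{T\wedge\tau_n}] \leq \E[Y_s] = \E\big[\exp(e^{-\alpha s}V(X_s))\big]$.

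Finally I would remove the localization. Since $\tau_n \uparrow T$ $\P$-a.s.\ (using continuity of sample paths, that $X$ takes values in $\mathcal O$, and the a.s.\ finiteness of $\int_s^T|\bar V(r,X_r)|\,dr$), and since $Y_{T\wedge\tau_n}\geq 0$, Fatou's lemma gives $\E[Y_T] \leq \liminf_{n\to\infty}\E[Y_{T\wedge\tau_n}] \leq \E\big[\exp(e^{-\alpha s}V(X_s))\big]$. Rewriting $Y_T$ via \eqref{eq:plan:Yt} and moving the deterministic-looking but $\omega$-measurable factor $\exp(-\int_s^T \beta e^{-\alpha r}\,dr)$ (constant in $\omega$) across yields exactly \eqref{eq:exp:mom:estimates}, with the value lying in $[0,\infty]$ since the right-hand side may be infinite. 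The main obstacle is the rigorous justification of the It\^o formula and the supermartingale property under the weak hypotheses: $V$ is only $C^2$ with no growth control, $\bar V$ is merely Borel measurable, and the drift coefficient $\mu$ and diffusion $\sigma$ need not be globally bounded, so care is needed to argue that on each stochastic interval $[s,\tau_n]$ everything is well-defined and that the stochastic integral part is a genuine martingale (for the supermartingale conclusion one only needs it to be a local martingale, which suffices once combined with the nonincreasing factor and nonnegativity). One should also note the degenerate conventions from the Notation subsection (e.g.\ $0\cdot\infty=0$) are compatible with the statement when the right-hand side is $+\infty$.
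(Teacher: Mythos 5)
Your argument is correct in substance, but it takes a genuinely different route from the paper: the paper's proof of Lemma~\ref{l:exp_mom} is a one-line reduction, applying Corollary 3.3 of \cite{HHM2019OneSided} after shifting time by $s$ (working with $(W_{s+t}-W_s)_{t\in[0,T-s]}$ and $(X_{s+t})_{t\in[0,T-s]}$) and rescaling the Lyapunov data ($V\mapsto e^{-\alpha s}V$, $\bar U=e^{-\alpha s}(\bar V(s+\cdot,\cdot)-\beta)$), whereas you re-derive the underlying exponential estimate from scratch: It\^o's formula for $Y_t=\exp\!\big(e^{-\alpha t}V(X_t)+\int_s^t e^{-\alpha r}(\bar V(r,X_r)-\beta)\,dr\big)$, the observation that hypothesis \eqref{eq:exp_mom_assumption} makes the drift of $Y$ nonpositive along the path, and then the supermartingale inequality plus Fatou. (Minor wording point: the term $\tfrac12 e^{-2\alpha t}\|\sigma(X_t)^*(\nabla V)(X_t)\|^2$ is the It\^o correction in the drift of $Y$, not part of the finite-variation rate of $\log Y_t$; your displayed inequality is nevertheless exactly the right one.) Your self-contained argument is essentially what sits inside the cited corollary, so nothing is lost; what the paper's reduction buys is brevity and no need to redo the localization, while your version makes the mechanism and the role of each hypothesis explicit. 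One caveat on your localization: stopping when $\|X_t\|_H\ge n$ or ``$X_t\notin\mathcal O$'' (which never occurs, since $X$ takes values in $\mathcal O$) does \emph{not} confine the process to a compact subset of $\mathcal O$ --- in Setting~\ref{sett:integrability} the space $H$ may be infinite-dimensional, and even for $H=\R^d$ the set $\{x\in\mathcal O\colon\|x\|_H\le n\}$ may have closure meeting $\partial O$, where $V$, $\nabla V$ can be unbounded --- so the claim that ``all the relevant integrals are bounded'' on $[s,\tau_n]$ is not justified as stated. The standard fix is to stop additionally when $|V(X_t)|$ (or the accumulated quadratic variation $\int_s^t\|\sigma(X_r)^*(\nabla V)(X_r)\|^2\,dr$) exceeds $n$, or simply to use the route you indicate at the end: It\^o's formula applies pathwise because the trajectory lies in the open set $O$ and the Setting gives $\int_s^T\|\mu(X_r)\|+\|\sigma(X_r)\|^2\,dr<\infty$ $\P$-a.s., so $Y$ is a nonnegative continuous local supermartingale, and localizing only the stochastic integral together with Fatou's lemma yields $\E[Y_T]\le\E[Y_s]$ without any compactness; with that adjustment your proof is complete and equivalent in strength to the paper's.
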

\begin{proof}[Proof of Lemma~\ref{l:exp_mom}]
Corollary 3.3 in~\cite{HHM2021}
(applied in the case $T>s$ with $T\defeq T-s$, 
$(\mathbb{F}_t)_{t\in [0,T-s]} \defeq  (\mathbb{F}_{t+s})_{t\in [0,T-s]} $,
$(W_t)_{t\in [0,T-s]} \defeq  (W_{s+t}-W_s)_{t\in [0,T-s]}$,
$\tau\defeq T-s$,
$(X_{t})_{t\in [0,T-s]}\defeq (X_{t+s})_{t\in [0,T-s]}$, 
$\bar{U}\defeq  \left( [0,T-s] \times \mathcal{O} \ni (t,x) \mapsto e^{-\alpha s} \bar V(t+s, x)-e^{-\alpha s}\beta \in \R \right)$,
$U\defeq \big( O \ni x \mapsto e^{-\alpha s} V(x) \in \R \big)$
in the notation of
Corollary 3.3 in~\cite{HHM2021})
implies \eqref{eq:exp:mom:estimates}.
This proves Lemma~\ref{l:exp_mom}.
\end{proof}

The next result, Lemma~\ref{l:exp_mom.moments}, 
is a consequence of
Corollary 3.4 in~\cite{HHM2021}.

\begin{lemma}[Exponential moment condition implies moments]
\label{l:exp_mom.moments}
  Assume Setting~\ref{sett:integrability},
  let $ \alpha,\beta \in [0,\infty) $, $ V \in C^2( O, [0,\infty) ) $
  satisfy
  for all $x\in \mathcal{O}$
  that 
  \begin{equation}  \begin{split}
  &\Big\langle
  \mu( x )
  ,
  (\nabla V)(x)
  \Big\rangle_H
  +
  \tfrac{ 1 }{ 2 }
  \operatorname{trace}\!\Big(
    \sigma(x) [\sigma(x)]^* 
    ( \operatorname{Hess} V )( x )
  \Big)
    + 
    \tfrac{ 
      1 
    }{ 
      2 e^{ \alpha s } 
    } 
    \left\| 
      \sigma(x)^*
      \left(\nabla V\right)\!(x) 
    \right\|_U^2 
    \\&
  \leq 
    \alpha V(x) +\beta,
  \end{split}     \end{equation}
  and let $t\in[s,T]$, $p\in[1,\infty)$.
  Then
  it holds that
  \begin{equation}  \begin{split}\label{eq:moment.exp.marginal.condition}
    &\left\|1+V(X_{t})\right\|_{L^p(\P;\R)}
    \leq
    e^{\alpha t}\left(p+\smallint_s^t \tfrac{\beta}{e^{\alpha r}}\,dr
    +e^{-\alpha s}\|V(X_s)\|_{L^p(\P;\R)}\right).
  \end{split}     \end{equation}
\end{lemma}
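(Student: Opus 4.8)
The plan is to follow the same pattern as the proof of Lemma~\ref{l:exp_mom}: reduce the assertion, by a deterministic time shift, to the corresponding $L^p$-moment estimate for vanishing initial time, i.e.\ to Corollary~3.4 in~\cite{HHM2019OneSided}. Concretely, I would apply that result with $T$ replaced by $t-s$, with the filtration $(\mathbb{F}_{\tau+s})_{\tau\in[0,t-s]}$, the Brownian motion $(W_{\tau+s}-W_s)_{\tau\in[0,t-s]}$, the process $(X_{\tau+s})_{\tau\in[0,t-s]}$, the rescaled Lyapunov function $\tilde V=(O\ni x\mapsto e^{-\alpha s}V(x)\in[0,\infty))$, and the parameters $\tilde\alpha=\alpha$, $\tilde\beta=e^{-\alpha s}\beta$. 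Dividing the assumed inequality by $e^{\alpha s}$ and using linearity of $\mathcal{G}_{\mu,\sigma}$ together with $\nabla\tilde V=e^{-\alpha s}\nabla V$, one checks that the factor $e^{-\alpha s}$ is exactly what converts the weight $\tfrac{1}{2e^{\alpha t}}$ in front of $\|\sigma(x)^*(\nabla V)(x)\|^2$ into the weight $\tfrac{1}{2e^{\alpha\tau}}$ appropriate for initial time $0$, precisely as in the proof of Lemma~\ref{l:exp_mom}, so that the hypothesis lands in the form required by the cited result. Translating the conclusion back through $X_{\tau+s}=\tilde X_\tau$ and $V=e^{\alpha s}\tilde V$ gives $\|1+e^{-\alpha s}V(X_t)\|_{L^p(\P;\R)}\le e^{\alpha(t-s)}\big(p+\int_s^t\tfrac{\beta}{e^{\alpha r}}\,dr+e^{-\alpha s}\|V(X_s)\|_{L^p(\P;\R)}\big)$, and the desired estimate~\eqref{eq:moment.exp.marginal.condition} then follows from the elementary inequality $1+e^{-\alpha s}y\ge e^{-\alpha s}(1+y)$ for $y\in[0,\infty)$ (multiplying through by $e^{\alpha s}$). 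This last point also explains the mild asymmetry in~\eqref{eq:moment.exp.marginal.condition}, namely that the left-hand side carries the normalizing summand $1$ whereas the bound features only $\|V(X_s)\|_{L^p(\P;\R)}$.

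I do not expect a genuine obstacle along this route; the only delicate point is bookkeeping --- verifying that after division by $e^{\alpha s}$ the transformed Lyapunov inequality matches the cited hypothesis on the nose (in particular the time-weight), and tracking the normalizing constants so that the final bound displays exactly the stated ``$p+\dots$'' and ``$1+V(X_t)$''. Since the parallel computation is already carried out in the proof of Lemma~\ref{l:exp_mom}, this is routine.

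If one instead wants a proof that does not invoke an external moment estimate, the plan is to apply It\^o's formula to $[s,t]\ni r\mapsto\sqrt{\kappa+V(X_r)}$ for a suitably large deterministic constant $\kappa=\kappa(p,\alpha,T)\in[1,\infty)$ (legitimate since $V\in C^2(O,[0,\infty))$ forces $\sqrt{\kappa+V}\in C^2(O,(0,\infty))$, and the integrability requirements follow from Setting~\ref{sett:integrability} and path-continuity) and then to invoke the stochastic Gronwall inequality, Proposition~\ref{prop:moments:hilbert}. The decisive observation is that, using crucially the summand $\tfrac{1}{2e^{\alpha t}}\|\sigma(x)^*(\nabla V)(x)\|^2$ of the hypothesis, the drift and diffusion coefficients $a,b$ of $\sqrt{\kappa+V(X_\cdot)}$ satisfy the monotonicity-type bound~\eqref{eq:lin.growth} with the \emph{deterministic} constant drift coefficient $\alpha/2$ and with $|\beta_r|^2=\beta/2$; Proposition~\ref{prop:moments:hilbert} then bounds $\|\sqrt{\kappa+V(X_t)}\|_{L^q(\P;\R)}$, and squaring yields a bound of the same structure as~\eqref{eq:moment.exp.marginal.condition}. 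In this route the main difficulties are (i) choosing $\kappa$ large enough to absorb the $\tfrac{P-2}{2}$-term of~\eqref{eq:lin.growth}, where $P>2p$ denotes the Hilbert-space exponent passed to Proposition~\ref{prop:moments:hilbert}, (ii) removing the unavoidable moment loss $q<P$ in Proposition~\ref{prop:moments:hilbert} by letting $P\downarrow 2p$, which is permissible here precisely because $\alpha/2$ is deterministic so that $\|\exp(\int_s^t\alpha_u\,du)\|_{L^q(\P;\R)}$ does not depend on $q$ and the estimate passes to the limit, and (iii) reproducing the sharp constant ``$p$'' rather than some larger $p$-dependent constant, which is really what makes the reduction in the first paragraph the cleaner option.
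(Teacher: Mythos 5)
Your first route is essentially the paper's own proof: the same application of Corollary~3.4 in~\cite{HHM2019OneSided} after the deterministic time shift $u\mapsto u+s$ with the rescaled data $e^{-\alpha s}V$ and $e^{-\alpha s}\beta$, differing only in trivial bookkeeping of the additive constant (the paper absorbs the $1$ via $1+V(X_t)\le e^{\alpha t}\big(p+e^{-\alpha t}V(X_t)\big)$ rather than via your inequality $1+e^{-\alpha s}y\ge e^{-\alpha s}(1+y)$). The alternative It\^o/stochastic-Gronwall route you sketch is not needed.
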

\begin{proof}[Proof of Lemma~\ref{l:exp_mom.moments}]
Corollary 3.4 in~\cite{HHM2021}
(applied in the case $t>s$ with $T\defeq t-s$, 
$(\mathbb{F}_u)_{u\in [0,t-s]} \defeq  (\mathbb{F}_{u+s})_{u\in [0,t-s]} $,
$(W_u)_{u\in [0,t-s]} \defeq  (W_{s+u}-W_s)_{u\in [0,t-s]}$,
$\mu\defeq ([0,t-s]\times\mathcal{O}\ni (r,x)\mapsto \mu(x)\in H)$,
$\sigma\defeq ([0,t-s]\times\mathcal{O}\ni (r,x)\mapsto \sigma(x)\in \HS(U,H))$,
$\tau\defeq t-s$,
$(X_{u})_{u\in [0,t-s]}\defeq (X_{s+u})_{u\in [0,t-s]}$, 
$U\defeq \big( [0,t-s]\times O \ni (r,x) \mapsto e^{-\alpha r}e^{-\alpha s} V(x) \in [0,\infty) \big)$,
$\beta\defeq \beta e^{-\alpha s}$
in the notation of
Corollary 3.4 in~\cite{HHM2021})
implies that
\begin{equation}  \begin{split}
    &\left\|1+V(X_{t})\right\|_{L^p(\P;\R)}
    \leq e^{\alpha t}\left\|p+e^{-\alpha(t-s)}e^{-\alpha s}V(X_{t})\right\|_{L^p(\P;\R)}
    \\&
    \leq
    e^{\alpha t}
    \left\|p+e^{-\alpha s}V(X_{s})+\smallint_0^{t-s}\tfrac{\beta e^{-\alpha s}}{e^{\alpha r}}\,dr\right\|_{L^p(\P;\R)}
    \leq
    e^{\alpha t}
    \left(p+\smallint_s^t \tfrac{\beta}{e^{\alpha r}}\,dr
    +e^{-\alpha s}\|V(X_s)\|_{L^p(\P;\R)}\right).
\end{split}     \end{equation}
This proves 
\eqref{eq:moment.exp.marginal.condition}
and finishes the proof of
Lemma~\ref{l:exp_mom.moments}.
\end{proof}


The following result, Lemma~\ref{lem:multiple_exp:guess}, generalizes the $k=1$ case of \cite[Lemma~2.23]{CoxHutzenthalerJentzen2013v3}
which is a special case of Lemma~\ref{lem:multiple_exp:guess} with
$H=\R^d$, $s=0$, $\bar{V}(t,\cdot)=\bar{V}(\cdot)$ for all $t\in [0,T]$, $X^{(1)}=X^{x}$, $X^{(3)}= X^{x}$, $X^{(2)}=X^{y}$,  and $X^{(4)}=X^{y}$
for $d\in \N$, $x,y\in O$.
\\

\begin{lemma}
\label{lem:multiple_exp:guess}
Assume Setting~\ref{sett:integrability}, 
let
  $
    X^{(j)} \colon [s,T] \times \Omega \to \mathcal{O}
  $, $j\in \{1,2,3,4\}$
  be $(\mathbb{F}_{t})_{t\in [s,T]} $-adapted stochastic
  processes with continuous sample paths
  satisfying that for all $j\in \{1,2,3,4\}$,
  $t\in [s,T]$ it holds a.s.\ that
  $
    \smallint_s^{ T } \| \mu( X_r^{(j)} ) \|_H 
    + \| \sigma( X_r^{(j)} ) \|_{\HS(U,H)}^2
    \, dr < \infty
  $
  and that
  $
    X_{t }^{(j)} = 
    X_{s}^{(j)}
    + \smallint_s^{ t } \mu(X_r^{(j)} ) \, dr
    +
    \smallint_s^{ t } \sigma(X_r^{(j)}) \, dW_r,
  $
let $ \alpha_0,\,\alpha_1,$ $\beta_0,\,\beta_1 \in \R$,
let
$ 
  V_0 
$,
$
  V_1  \in C^{ 2 }( O , [0,\infty) ) 
$,
let
$ 
  \bar{V}\colon[s,T]\times\mathcal{O}\to\R
$ 
be measurable
and satisfy
$\P\big(\sum_{j=1}^4\int_s^T|\bar{V}(r,X_r^{(j)})|\,dr<\infty\big)=1$
and
for all
$ i \in \{ 0, 1 \} $,
$ (t,x)\in \cup_{\omega\in \Omega} \cup_{r\in [s,T]} \cup_{j=1}^4 \{ (r, X^{(j)}_{r}(\omega)) \in [s,T] \times \mathcal{O} \}$
that
\begin{equation}
\label{eq:multiple_exp_est2}
\begin{split}
  &\Big\langle
  \mu( x )
  ,
  (\nabla V_i)(x)
  \Big\rangle_H
  +
  \tfrac{ 1 }{ 2 }
  \operatorname{trace}\!\Big(
    \sigma(x) [\sigma(x)]^* 
    ( \operatorname{Hess} V_i )( x )
  \Big)
  \\&
  +
  \tfrac{ 
    1
  }{ 
    2 
    e^{ 
      \alpha_{ i } {t} 
    }
  }
    \|
      \sigma( x )^* ( \nabla V_{ i } )( x )
    \|_U^2
  +
  \mathbbm{1}_{
    \{ 1 \}
  }(i)
  \cdot
  \bar{V}(t,x)
\leq
  \alpha_{ i } V_{ i }(x)
  +
  \beta_{ i },
\end{split}
\end{equation}
let 
$q, q_0,\,q_1\in(0,\infty]$
satisfy
$
  \tfrac{ 1 }{ q_{ 0} }+\tfrac{ 1 }{ q_{ 1} } = \tfrac{ 1 }{ q }
$,
and
let 
$
  \constFun \colon [s,T] \to \R
$
be a measurable function
with
$
  \int_s^{ T }
    |\constFun(r)|
    \,
  dr
  < \infty
$.
Then it holds that
\begin{equation}
\begin{split}\label{eq:2terms}
&
    \left\|
      \exp\!\left(
 \int_s^T
 \Big(
  \constFun(r)
  +
      \sum_{j=1}^4
  \left[
    \tfrac{ 
      V_{ 0 }( X_r^{(j)} ) 
    }{ 4 q_{ 0 } (T-s) e^{ \alpha_{ 0} r } } 
    +
    \tfrac{ 
     \bar{V}( r, X_r^{(j)} ) 
    }{ 
      4 q_{ 1} 
      e^{ \alpha_{ 1} r }
    }
  \right]
  \Big)\,dr
  \right)
    \right\|_{
      L^q( \P; \R )
    }
\\ &
\leq
  \exp\!\left(
        \int_s^T  
        \Big(
        \constFun(r)
        +
        \tfrac{
          \beta_{ 0 } \, ( 1 - \frac{ r-s }{ T-s } )
        }{
          q_{0} e^{ \alpha_{ 0 } r }
        }
        + 
        \tfrac{ \beta_{ 1 } }{ 	q_{ 1 } e^{ \alpha_{ 1 } r } }
        \Big)
        \,
        dr
        \right)
  \prod_{ i = 0 }^1 \prod_{j=1}^4
  \left\| \exp\left(  \tfrac{
          V_{ i }( X^{(j)}_s ) 
        }{
          4 q_{ i } e^{\alpha_{ i } s 
        } } \right) \right\|_{L^{4 q_{ i }}(\P;\R)}
  \!\! .
\end{split}
\end{equation}
\end{lemma}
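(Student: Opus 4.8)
The overall strategy is to reduce the estimate~\eqref{eq:2terms} to the single-process exponential moment estimate of Lemma~\ref{l:exp_mom} via a Hölder-type inequality and the generalized Young-type relation $\tfrac{1}{q_0}+\tfrac{1}{q_1}=\tfrac{1}{q}$. First I would observe that, by the assumption $\tfrac{1}{q_0}+\tfrac{1}{q_1}=\tfrac{1}{q}$, the generalized Hölder inequality gives
\begin{equation}
\left\|\exp\!\Big(\smallint_s^T\constFun(r)\,dr\Big)\,A_0\,A_1\right\|_{L^q(\P;\R)}
\leq \exp\!\Big(\smallint_s^T\constFun(r)\,dr\Big)\,\|A_0\|_{L^{q_0}(\P;\R)}\,\|A_1\|_{L^{q_1}(\P;\R)},
\end{equation}
where $A_i=\exp\big(\sum_{j=1}^4\smallint_s^T W_i(r,X^{(j)}_r)\,dr\big)$ collects the $V_0$-terms ($i=0$, with the weight $\tfrac{1}{4q_0(T-s)e^{\alpha_0 r}}$) and the $\bar V$-terms ($i=1$, with the weight $\tfrac{1}{4q_1 e^{\alpha_1 r}}$). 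This splits the problem into two independent factors, one for each index $i\in\{0,1\}$.

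For each fixed $i$, the inner exponential is a product over $j\in\{1,2,3,4\}$ of four exponentials each carrying weight $\tfrac14$ in the exponent; applying Hölder once more with four equal exponents $4q_i$ bounds $\|A_i\|_{L^{q_i}(\P;\R)}$ by $\prod_{j=1}^4\big\|\exp(\smallint_s^T 4 W_i(r,X^{(j)}_r)\,dr/4)\big\|_{L^{4q_i}(\P;\R)}^{1/4}\cdot\ldots$ — more precisely, after normalizing, by $\prod_{j=1}^4\big\|\exp(\smallint_s^T\tfrac{(\text{unweighted term})}{e^{\alpha_i r}}\,dr)\big\|^{1/4}$ raised to the appropriate power. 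Now for each fixed $j$ I would apply Lemma~\ref{l:exp_mom} to the process $X^{(j)}$: with the choices $V=V_i$, $\alpha=\alpha_i$, $\beta=\beta_i$, and $\bar V$ replaced by $\mathbbm 1_{\{1\}}(i)\cdot\bar V$ (for $i=0$ one takes $\bar V\equiv 0$ and for $i=1$ the given $\bar V$), the hypothesis~\eqref{eq:multiple_exp_est2} is exactly the drift condition~\eqref{eq:exp_mom_assumption} required by Lemma~\ref{l:exp_mom}, so
\begin{equation}
\E\!\left[\exp\!\Big(\tfrac{V_i(X^{(j)}_T)}{e^{\alpha_i T}}+\smallint_s^T\tfrac{\mathbbm 1_{\{1\}}(i)\bar V(r,X^{(j)}_r)}{e^{\alpha_i r}}\,dr\Big)\right]\leq \E\!\left[\exp\!\Big(\tfrac{V_i(X^{(j)}_s)}{e^{\alpha_i s}}+\smallint_s^T\tfrac{\beta_i}{e^{\alpha_i r}}\,dr\Big)\right].
\end{equation}
The right-hand side factors the deterministic $\beta_i$-integral out and leaves $\big\|\exp(V_i(X^{(j)}_s)/e^{\alpha_i s})\big\|_{L^1(\P;\R)}$, which after the $\tfrac14$-weighting becomes the $L^{4q_i}$-norm appearing in~\eqref{eq:2terms}.

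The one genuinely delicate point — and the step I expect to be the main obstacle — is handling the $V_0$-term correctly: in~\eqref{eq:2terms} the $V_0$-contribution at the \emph{final} time appears with the weight $\tfrac{1}{4q_0(T-s)e^{\alpha_0 r}}$ integrated against $dr$ over $[s,T]$, which integrates the \emph{running} value of $V_0(X^{(j)}_r)$ rather than only its terminal value, whereas Lemma~\ref{l:exp_mom} as stated controls $\exp(V(X_T)/e^{\alpha T}+\int_s^T\bar V(r,X_r)/e^{\alpha r}\,dr)$ with $V$ only at the endpoint. The resolution is to absorb the running $V_0$-term into the role of $\bar V$ in a second application of Lemma~\ref{l:exp_mom}: one sets $\bar V(r,x):=\tfrac{1}{T-s}V_0(x)$ and checks that~\eqref{eq:multiple_exp_est2} for $i=0$ implies~\eqref{eq:exp_mom_assumption} with the discounted factor $\big(1-\tfrac{r-s}{T-s}\big)$ appearing because $\tfrac{1}{e^{\alpha_0 r}}V_0(X_r)=\tfrac{1}{e^{\alpha_0 T}}V_0(X_T)-\smallint_r^T\tfrac{d}{du}\big(\tfrac{V_0(X_u)}{e^{\alpha_0 u}}\big)\,du$ and a Fubini/integration-by-parts on $[s,T]$ produces exactly the linear weight $\big(1-\tfrac{r-s}{T-s}\big)$ on $\beta_0$ seen in~\eqref{eq:2terms}; this is precisely the computation that underlies the $k=1$ case of \cite[Lemma~2.23]{CoxHutzenthalerJentzen2013}, so one can either cite it or repeat the short Fubini argument. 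Once this bookkeeping is in place, assembling the two Hölder splittings with the four applications of Lemma~\ref{l:exp_mom} (one per $j$) for each $i$ yields~\eqref{eq:2terms}, completing the proof.
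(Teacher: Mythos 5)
Your high-level plan (split by H\"older across the two scales $q_0,q_1$ and the four processes, then invoke Lemma~\ref{l:exp_mom} once per factor) coincides with the paper's, and your treatment of the $i=1$ factor is fine up to the unstated use of $V_1\geq 0$ to insert the terminal term $V_1(X_T^{(j)})e^{-\alpha_1 T}$. The genuine gap is exactly at the step you flag as delicate: your resolution of the running $V_0$-integral does not work as described. With $\bar V(r,x):=\tfrac{V_0(x)}{T-s}$, condition \eqref{eq:exp_mom_assumption} (with $V=V_0$, $\alpha=\alpha_0$, $\beta=\beta_0$) would read
\begin{equation*}
(\mathcal{G}_{\mu,\sigma}V_0)(x)+\tfrac{1}{2e^{\alpha_0 t}}\|\sigma(x)^*(\nabla V_0)(x)\|^2+\tfrac{V_0(x)}{T-s}\leq \alpha_0 V_0(x)+\beta_0,
\end{equation*}
which does \emph{not} follow from \eqref{eq:multiple_exp_est2} for $i=0$, since $V_0$ is in general unbounded and the extra term $\tfrac{V_0(x)}{T-s}$ cannot be absorbed into the constant $\beta_0$. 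Moreover, even if that condition held, Lemma~\ref{l:exp_mom} applied with this $\bar V$ would return the unweighted integral $\int_s^T\beta_0 e^{-\alpha_0 r}\,dr$ on the right-hand side; the weight $(1-\tfrac{r-s}{T-s})$ cannot come out of a single such application. The pathwise identity you invoke for $V_0(X_r)e^{-\alpha_0 r}$ presupposes differentiability of $u\mapsto V_0(X_u)e^{-\alpha_0 u}$ (it is an It\^o process), and merely citing the $k=1$ case of Cox--Hutzenthaler--Jentzen is not sufficient either, because the present lemma is precisely a generalization of that statement (arbitrary initial time $s$, time-dependent $\bar V$, four distinct processes, the $\phi$-term).

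The correct mechanism, and the one the paper uses, is elementary but different from your sketch: first rewrite the weighted $\beta_0$-term by Fubini,
\begin{equation*}
\int_s^T\tfrac{\beta_0\,(1-\frac{r-s}{T-s})}{q_0\,e^{\alpha_0 r}}\,dr=\int_s^T\!\!\int_s^r\tfrac{\beta_0}{q_0(T-s)\,e^{\alpha_0 u}}\,du\,dr,
\end{equation*}
then apply Jensen's inequality with respect to the probability measure $\tfrac{dr}{T-s}$ on $[s,T]$ to bound the exponential of the time average $\int_s^T\big[\tfrac{V_0(X_r^{(j)})}{e^{\alpha_0 r}}-\int_s^r\tfrac{\beta_0}{e^{\alpha_0 u}}\,du\big]\tfrac{dr}{T-s}$, after taking expectations, by $\sup_{t\in[s,T]}\E\big[\exp\big(\tfrac{V_0(X_t^{(j)})}{e^{\alpha_0 t}}-\int_s^t\tfrac{\beta_0}{e^{\alpha_0 u}}\,du\big)\big]$, and finally apply Lemma~\ref{l:exp_mom} for each intermediate terminal time $t\in[s,T]$ with the constant choice $\bar V\equiv-\beta_0$ (for $i=0$), respectively with $V=V_1$ and $\bar V$ replaced by $\bar V-\beta_1$ (for $i=1$). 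With this Jensen/Fubini step inserted in place of your absorption argument, the remainder of your plan goes through as in the paper.
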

\begin{proof}[Proof of Lemma~\ref{lem:multiple_exp:guess}]
Without loss of generality we assume for the rest of the proof that $\constFun \equiv 0$,
otherwise divide by $\exp(\int_s^T \constFun(r)\,dr)\in(0,\infty)$.
H{\"o}lder's inequality together with
$\tfrac{ 1 }{ q } =4\tfrac{ 1 }{ 4q_{ 0} }+4\tfrac{ 1 }{ 4q_{ 1} }$,
the fact that
$
\int_s^{T} \tfrac{\beta_0(1-\frac{r-s}{T-s})}{q_0 e^{\alpha_0 r }} \, dr
=
\int_s^{T} \int_s^r \tfrac{\beta_0}{q_0(T-s)e^{\alpha_0 u}} \, du \, dr
$,
Jensen's inequality,
nonnegativity of $V_1$,
and
Lemma~\ref{l:exp_mom}
(applied
for every $j\in\{1,2,3,4\}$, $t\in[s,T]$
with 
$T \defeq  t$,
$\alpha\defeq \alpha_0$,
$\beta\defeq 0$,
$V\defeq V_0$,
$\bar{V}\defeq  -\beta_0$,
$X \defeq  X^{(j)}$ 
and applied
for every $j\in\{1,2,3,4\}$
with
$\alpha\defeq \alpha_1$,
$\beta\defeq 0$,
$V\defeq V_1$,
$\bar{V}\defeq \bar{V}-\beta_1$,
$X \defeq  X^{(j)}$
in the notation of Lemma~\ref{l:exp_mom})
show that
\begin{equation}
\begin{split}
&
    \left\|
      \exp\!\left(
 \int_s^T
      \sum_{j=1}^4
  \left[
    \tfrac{ 
      V_{ 0 }( X_r^{(j)} ) 
    }{ 4 q_{ 0 } (T-s) e^{ \alpha_{ 0} r } } 
    +
    \tfrac{ 
     \bar{V}(r, X_r^{(j)} ) 
    }{ 
      4 q_{ 1} 
      e^{ \alpha_{ 1} r }
    }
  \right]dr
  \right)
    \right\|_{
      L^q( \P; \R )
    }
  \exp\left(
    -\int_s^{T}\sum_{i=0}^1\tfrac{\beta_i(1-\frac{r-s}{T-s})^{1-i}}{q_ie^{\alpha_ir}}\,dr\right)
    \\
    &
\leq
 \prod_{j=1}^4
 \left[
    \left\|
      \exp\!\left(
      \tfrac{1}{T-s}
 \int_s^{T}
 \Big(
    \tfrac{ 
      V_{ 0 }( X_{r}^{(j)} ) 
    }{  4q_0  e^{ \alpha_{ 0} r } } 
    -\int_s^r\tfrac{\beta_0}{4 q_0  e^{\alpha_0 u}}\,du
  \Big)
    \,dr
    \right)
    \right\|_{
      L^{4q_0}( \P; \R )
    }
    \right.
    \\ & \qquad \cdot 
    \left.
 \left\|
   \exp\!\left(
   \int_s^{T} \!\!
    \tfrac{ 
     \bar{V}(r, X_{r}^{(j)} )-\beta_1 
    }{ 
      4 q_1 e^{ \alpha_{ 1} r }
    }
  \,dr
  \right)
    \right\|_{
      L^{4q_1}( \P; \R )
    }
    \right]
    \\
    &
\leq
\prod_{j=1}^4
\left[\sup_{t\in[s,T]}
\left(
    \E\!\left[
      \exp\!\left(
    \tfrac{ 
      V_{ 0 }( X_{t}^{(j)} ) 
    }{   e^{ \alpha_{ 0} t } } 
    -\int_s^t \tfrac{\beta_0}{e^{\alpha_0 u}}\,du
    \right)
    \right]
    \right)^{\!\!\nicefrac{1}{4q_0}}
    \right. 
    \\ & \qquad \cdot 
 \left. \left(\E\!\left[
   \exp\!\left(
    \tfrac{ 
       V_{ 1 }( X_T^{(j)} ) 
    }{ e^{ \alpha_{ 1} T } } 
    +
   \int_s^{T}
    \tfrac{ 
     \bar{V}(r, X_{r}^{(j)} ) -\beta_1
    }{ 
      e^{ \alpha_{ 1} r } 
    }
  \,dr
  \right)
  \right]\right)^{\!\!\nicefrac{1}{4q_1}}
  \right]
\\ &
\leq
\prod_{j=1}^4
\left[
\left( \E\!\left[ \exp\!\Big(\tfrac{V_0(X^{(j)}_s)}{e^{\alpha_{ 0 } s}}\Big) \right]\right)^{\!\!\nicefrac{1}{4q_0}}
\left(
\E\!\left[
\exp\!\left(
          \tfrac{V_{ 1 }( X^{(j)}_s )}{e^{\alpha_{ 1 } s}} 
    \right)
\right]
\right)^{\!\!\nicefrac{1}{4q_1}}
\right]
=
  \prod_{ i = 0 }^1 \prod_{j=1}^4 
  \left\| \exp\left(  \tfrac{
          V_{ i }( X^{(j)}_s ) 
        }{
          4 q_{i} e^{\alpha_{ i } s 
        } } \right) \right\|_{L^{ 4 q_{ i } }(\P;\R)}
  \!\! .
\end{split}
\end{equation}
This implies~\eqref{eq:2terms}.
The proof of Lemma~\ref{lem:multiple_exp:guess} is thus completed.
\end{proof}


\section{Existence of a $C^0$-solution}\label{sec:3}
In this section we prove 
a strong local H\"older estimate 
for solutions of SDEs in Lemma \ref{lem:Hoelder} below,
a moment estimate for the first derivative process
in Lemma \ref{lem:moments:Zv} below,
and
establish existence of a continuous solution under suitable assumptions
in Theorem \ref{thm:exists:C0} below.
First, we introduce the setting for these results.
\begin{sett} \label{s:exists:C0}
Let $( H, \left< \cdot , \cdot \right>_H, \left\| \cdot \right\|_H )$ and $( U, \left< \cdot , \cdot \right>_U, \left\| \cdot \right\|_U )$
be separable $\R$-Hilbert spaces,
let
$ T \in (0,\infty) $,
let $O \subseteq H$ be an open set,
let $\mathcal{O}\in\mathcal{B}(O)$,
let $ \mu \in C(\mathcal{O},H)$,
$ \sigma \in C(\mathcal{O}, \HS(U,H))$,
let $ \alpha_0,\alpha_1,\beta_0,\beta_1\in [0,\infty)$,
$ 
  V_0 
$,
$
  V_1  \in C^{ 2 }( O , [0,\infty) ) 
$,
let
$ 
  \bar{V} \colon [0,T] \times \mathcal{O} \to [0,\infty) 
$
be a measurable function,
assume
for all 
$ i \in \{ 0, 1 \} $,
$t\in[0,T]$,
$x\in \mathcal{O}$
that
\begin{equation}
\begin{split}
  &\Big\langle
  \mu( x )
  ,
  (\nabla V_i)(x)
  \Big\rangle_H
  +
  \tfrac{ 1 }{ 2 }
  \operatorname{trace}\!\Big(
    \sigma(x) [\sigma(x)]^* 
    ( \operatorname{Hess} V_i )( x )
  \Big)
  \\&
  +
  \tfrac{ 
    1
  }{ 
    2 
    e^{ 
      \alpha_{ i } {t} 
    }
  }
    \|
      \sigma( x )^* ( \nabla V_{ i } )( x )
    \|_U^2
  +
  \mathbbm{1}_{
    \{ 1 \}
  }(i)
  \cdot
  \bar{V}(t,x)
\leq
  \alpha_{ i } V_{ i }(x)
  +
  \beta_{ i },
\end{split}
\end{equation}
let 
$
  \constFun \colon [0,T] \to [0,\infty)
$
be a measurable function satisfying that $\int_0^T \constFun(r) \, dr <\infty$,
let $p\in[2,\infty)$,
$\theta\in[0,\infty)$, $q_0,q_1\in(0,\infty)$
satisfy $\frac{\theta}{p(1+\theta)}=\frac{1}{q_0}+\frac{1}{q_1}$,
assume for all $t\in[0,T]$, $x,y\in \mathcal{O}$ that
\begin{equation}  \begin{split}
  &\big\langle x-y,\mu(x)-\mu(y)\big\rangle_H+\tfrac{1}{2}\big\|\sigma(x)-\sigma(y)\big\|^2_{\HS(U,H)}
  +\tfrac{p(1+\theta)-2}{2}
    \tfrac{
    \left\|\left\langle x-y,\sigma(x)-\sigma(y)\right\rangle_H\right\|_{\HS(U,\R)}^2}{\|x-y\|_H^2}
  \\&
  \leq \|x-y\|^2_H\cdot\Big(\phi(t)
  +\tfrac{V_0(x)+V_0(y)}{2q_0T e^{\alpha_0t}}
  +\tfrac{\bar{V}(t,x)+\bar{V}(t,y)}{2q_1e^{\alpha_1t}}
  \Big),
\end{split}     \end{equation}
let $\gamma\in[\tfrac{1}{p},\infty)$, $c\in[0,\infty)$ satisfy for all $x\in\mathcal{O}$ that
\begin{align}
\begin{split}
 \max\left\{ \|\mu(x)\|_H , \|\sigma(x)\|_{\HS(U,H)} \right\}
 \leq  c(1+V_0(x))^{\gamma},
\end{split}
\end{align}
let $(\Omega, \F, \P, (\mathbb{F}_{t})_{t\in [0,T]})$ be a filtered probability space
satisfying the usual conditions,
let 
$
  (W_t)_{t\in[0,T]}
$
  be an $\textup{Id}_U$-cylindrical $(\mathbb{F}_t)_{t\in[0,T]}$-Wiener process,
for all $s\in [0,T]$, $x\in\mathcal{O}$
let $ X^x_{s,\cdot} \colon [s,T] \times \Omega \to \mathcal{O} $
be an $(\mathbb{F}_t)_{t\in[s,T]}$-adapted stochastic process
with continuous sample paths
satisfying that for all $t\in [s,T]$ it holds a.s.~that
$
  \int_s^T | \bar{V}(r, X_{s,r}^x) | d r <\infty
$
and 
\begin{align}
  X^x_{s,t} = 
  x
  + \int_s^{ t } \mu(X^x_{s,r} ) \, dr
  +
  \int_s^{ t } \sigma(X^x_{s,r} ) \, dW_r,
\end{align}
and
let $\Delta_T =\{(s,t)\in [0,T]^2 \colon s \leq t\}$.
\end{sett}

\subsection{Strong local H\"older estimate}\label{sec:3.1}
The following lemma proves strong local H\"older continuity
of SDE solutions in the starting point, starting time, and terminal time.
Lemma \ref{lem:Hoelder}
improves existing results in
in~\cite{CoxHutzenthalerJentzen2013v3,
FangImkellerZhang2007,
Li1994,%
Zhang2010,%
HHM2021
}.

\begin{lemma}[Strong local H\"older estimate] \label{lem:Hoelder}
Assume Setting~\ref{s:exists:C0}
and let 
$s_1,s_2,t_1,t_2\in [0,T]$, $x_1,x_2\in \mathcal{O}$
satisfy that $s_1\leq t_1$, $s_2\leq t_2$
and $s_1\leq s_2$.
Then it holds that 
\begin{equation}  \begin{split}
  &\|X_{s_1,t_1}^{x_1}-X_{s_2,t_2}^{x_2}\|_{L^{p}(\P;H)}
  \leq
    \sqrt{|t_1-t_2|}c e^{\alpha_0\gamma T}\Big|p\gamma+e^{-\alpha_0 s_1}V_0(x_1)
    +\textstyle\int_{s_1}^T\tfrac{\beta_0}{e^{\alpha_0u}}\,du\Big|^{\gamma}
    \big(\sqrt{T}+p\big)
\\&\quad
+\|x_1-x_2\|_H
  \exp\!\left(
        \int_{s_1}^{T}  
        \Big(
        \constFun(r)
        +
        \tfrac{
          \beta_{ 0 } 
        }
        { q_0 e^{ \alpha_{ 0 } r } }
        + 
        \tfrac{ \beta_{ 1 } }
        { q_1 e^{ \alpha_{ 1 } r } }
        \Big)
        \,
        dr
        +
        \sum_{ i = 0 }^1 
        \tfrac{V_i(x_1)+V_i(x_2)}{2q_ie^{\alpha_i s_1} } 
      \right)
  \\&\quad+
c e^{\alpha_0 \gamma|s_2-s_1|}\Big|p(1+\theta)\gamma+e^{-\alpha_0 s_1}V_0(x_2)
+\textstyle\int_{s_1}^T\tfrac{\beta_0}{e^{\alpha_0u}}\,du\Big|^{\gamma}
\big(\sqrt{T}+p(1+\theta)\big)\sqrt{|s_2-s_1|}
\\&
\qquad\quad\cdot
  \exp\!\left(
        \int_{s_1}^{T}  
        \Big(
        \constFun(r)
        +
        \tfrac{
          \beta_{ 0 } 
        }
        { q_0 e^{ \alpha_{ 0 } r } }
        + 
        \tfrac{ \beta_{ 1 } }
        { q_1 e^{ \alpha_{ 1 } r } }
        \Big)
        \,
        dr
        +
        \sum_{ i = 0 }^1 
        \tfrac{V_i(x_2)}{q_i e^{\alpha_i s_1} } 
      \right)
\!.
\end{split}     \end{equation}
\end{lemma}
\begin{proof}[Proof of Lemma~\ref{lem:Hoelder}]
  Lemma 3.8 in~\cite{HHM2021}
  (applied  in the case $s_1<T$
  with $T\defeq T-s_1$,
  $(\mathbb{F}_t)_{t\in[0,T-s_1]}\defeq (\mathbb{F}_{s_1+t})_{t\in[0,T-s_1]}$,
  $(W_t)_{t\in[0,T-s_1]}\defeq (W_{s_1+t}-W_{s_1})_{t\in[0,T-s_1]}$,
  $\mu\defeq ([0,T-s_1]\times\mathcal{O}\ni (r,x)\mapsto \mu(x)\in H)$,
  $\sigma\defeq ([0,T-s_1]\times\mathcal{O}\ni (r,x)\mapsto \sigma(x)\in \HS(U,H))$,
  $\tau\defeq T-s_1$,
  $(X_t)_{t\in[0,T-s_1]}\defeq (X_{s_1,s_1+t}^{x_1})_{t\in[0,T-s_1]}$,
  $(Y_t)_{t\in[0,T-s_1]}\defeq (X_{s_1,s_1+t}^{x_2})_{t\in[0,T-s_1]}$,
  $\beta_0\defeq e^{-\alpha_0 s_1}\beta_0$, $\beta_1=e^{-\alpha_1 s_1}\beta_1$,
  $V_0\defeq (O\ni x\mapsto e^{-\alpha_0 s_1}V_0(x)\in[0,\infty))$,
  $V_1\defeq (O\ni x\mapsto e^{-\alpha_1 s_1}V_1(x)\in[0,\infty))$,
  $\bar{V}\defeq ([0,T-s_1]\times \mathcal{O}\ni(t,x)\mapsto e^{-\alpha_1 s_1}\bar{V}(s_1+t,x)\in\R)$,
  $\constFun\defeq ([0,T-s_1]\ni t\mapsto\constFun(s_1+t)\in[0,\infty))$,
  $p\defeq p(1+\theta)$,
  $q\defeq p(1+1/\theta)$,
  $q_0\defeq q_0$,
  $q_1\defeq q_1$,
  $t_1\defeq t_1-s_1$,
  $t_2\defeq t_2-s_1$
  in the notation of
  Lemma 3.8 in~\cite{HHM2021})
  implies that
\begin{equation}  \begin{split}\label{eq:local.Hoelder.fixed.s}
  &\|X_{s_1,t_1}^{x_1}-X_{s_1,t_2}^{x_2}\|_{L^{p}(\P;H)}
  \\&
\leq
    \sqrt{|t_1-t_2|}c e^{\alpha_0\gamma (T-s_1)}\Big|p\gamma+e^{-\alpha_0 s_1}V_0(x_1)
    +\int_0^{T-s_1}\tfrac{e^{-\alpha_0 s_1}\beta_0}{e^{\alpha_0u}}\,du\Big|^{\gamma}
    \Big(\sqrt{T-s_1}+p\Big)
\\&\;\;
+\|x_1-x_2\|_H
  \exp\!\left(
        \int_0^{T-s_1}  
        \Big(
        \constFun(s_1+r)
        +
        \tfrac{
          e^{-\alpha_{0}s_1}\beta_{ 0 } 
        }
        { q_0 e^{ \alpha_{ 0 } r } }
        + 
        \tfrac{ e^{-\alpha_{1}s_1}\beta_{ 1 } }
        { q_1 e^{ \alpha_{ 1 } r } }
        \Big)
        \,
        dr
        +
        \sum_{ i = 0 }^1 
        \tfrac{V_i(x_1)+V_i(x_2)}{2q_ie^{\alpha_i s_1} } 
      \right)
  \! .
\end{split}     \end{equation}
  The fact that $\gamma \geq \tfrac{1}{p}\geq \tfrac{1}{p(1+\theta)}$,
  Lemma 3.7 in~\cite{HHM2021}
  (applied in the case $s_2>s_1$
  with $T\defeq s_2-s_1$,
  $(\mathbb{F}_t)_{t\in[0,s_2-s_1]}\defeq (\mathbb{F}_{s_1+t})_{t\in[0,s_2-s_1]}$,
  $(W_t)_{t\in[0,s_2-s_1]}\defeq (W_{s_1+t}-W_{s_1})_{t\in[0,s_2-s_1]}$,
  $\mu\defeq ([0,s_2-s_1]\times\mathcal{O}\ni(t,x)\mapsto\mu(x)\in H)$,
  $\sigma\defeq ([0,s_2-s_1]\times\mathcal{O}\ni(t,x)\mapsto\sigma(x)\in \HS(U,H))$,
  $\tau\defeq s_2-s_1$,
  $(X_t)_{t\in[0,s_2-s_1]}\defeq (X_{s_1,s_1+t}^{x_2})_{t\in[0,s_2-s_1]}$,
  $(Y_t)_{t\in[0,s_2-s_1]}\defeq (X_{s_1,s_1+t}^{x_2})_{t\in[0,s_2-s_1]}$,
  $\beta_0\defeq e^{-\alpha_0 s_1}\beta_0$,
  $\beta_1\defeq e^{-\alpha_1 s_1}\beta_1$,
  $V_0\defeq (O\ni x\mapsto e^{-\alpha_0 s_1}V_0(x)\in[0,\infty))$,
  $V_1\defeq (O\ni x\mapsto e^{-\alpha_1 s_1}V_1(x)\in[0,\infty))$,
  $\bar{V}\defeq ([0,s_2-s_1]\times \mathcal{O}\ni(t,x)\mapsto e^{-\alpha_1s_1}\bar{V}(s_1+t,x)\in[0,\infty))$,
  $\phi\defeq ([0,s_2-s_1]\ni t\mapsto\phi(s_1+t)\in[0,\infty))$,
  $p\defeq p(1+\theta)$,
  $q\defeq p(1+1/\theta)$,
  $q_0\defeq q_0$,
  $q_1\defeq q_1$,
  $r\defeq p(1+\theta)$,
  $s\defeq 0$
  in the notation of
  Lemma 3.7 in~\cite{HHM2021})
  yield that
 \begin{equation}  \begin{split}\label{eq:local.Hoelder.variable.s}
  &\|X_{s_1,s_2}^{x_2}-x_2\|_{L^{p(1+\theta)}(\P;H)}
  =\|X_{s_1,s_2}^{x_2}-X_{s_1,s_1}^{x_2}\|_{L^{p(1+\theta)}(\P;H)}
  \\&
\leq
c e^{\alpha_0 \gamma(s_2-s_1)}\Big|p(1+\theta)\gamma+e^{-\alpha_0 s_1}V_0(x_2)
+\int_0^{s_2-s_1}\tfrac{e^{-\alpha_0 s_1}\beta_0}{e^{\alpha_0 u}}\,du\Big|^{\gamma}
\big(\sqrt{s_2-s_1}+p(1+\theta)\big)\sqrt{s_2-s_1}.
\end{split}     \end{equation}
Moreover, the fact that for all $t\in[0,T-s_2]$
it holds a.s.~that
\begin{equation}  \begin{split}
  X_{s_1,s_2+t}^{x_2}
  =
  X_{s_1,s_2}^{x_2}+\int_0^t\mu(X_{s_1,s_2+r}^{x_2})\,dr
  +\int_0^t\sigma(X_{s_1,s_2+r}^{x_2})\,d(W_{s_2+r}-W_{s_2})
\end{split}     \end{equation}
and  that
\begin{equation}  \begin{split}
  X_{s_2,s_2+t}^{x_2}
  =
  x_2+\int_0^t\mu(X_{s_2,s_2+r}^{x_2})\,dr
  +\int_0^t\sigma(X_{s_2,s_2+r}^{x_2})\,d(W_{s_2+r}-W_{s_2}),
\end{split}     \end{equation}
  Lemma 3.8 in~\cite{HHM2021}
  (applied  in the case $s_2<T$
  with $T\defeq T-s_2$,
  $(\mathbb{F}_t)_{t\in[0,T-s_2]}\defeq (\mathbb{F}_{s_2+t})_{t\in[0,T-s_2]}$,
  $(W_t)_{t\in[0,T-s_2]}\defeq (W_{s_2+t}-W_{s_2})_{t\in[0,T-s_2]}$,
  $\mu\defeq ([0,T-s_2]\times\mathcal{O}\ni (r,x)\mapsto \mu(x)\in H)$,
  $\sigma\defeq ([0,T-s_2]\times\mathcal{O}\ni (r,x)\mapsto \sigma(x)\in \HS(U,H))$,
  $\tau\defeq T-s_2$,
  $(X_t)_{t\in[0,T-s_2]}\defeq (X_{s_1,s_2+t}^{x_2})_{t\in[0,T-s_2]}$,
  $(Y_t)_{t\in[0,T-s_2]}\defeq (X_{s_2,s_2+t}^{x_2})_{t\in[0,T-s_2]}$,
  $\beta_0\defeq e^{-\alpha_0 s_2}\beta_0$,
  $\beta_1\defeq e^{-\alpha_1 s_2}\beta_1$,
  $V_0\defeq (O\ni x\mapsto e^{-\alpha_0 s_2}V_0(x)\in[0,\infty))$,
  $V_1\defeq (O\ni x\mapsto e^{-\alpha_1 s_2}V_1(x)\in[0,\infty))$,
  $\bar{V}\defeq ([0,T-s_2]\times \mathcal{O}\ni(t,x)\mapsto e^{-\alpha_1 s_2}\bar{V}(s_2+t,x)\in\R)$,
  $p\defeq p(1+\theta)$,
  $q\defeq p(1+1/\theta)$,
  $q_0\defeq q_0$,
  $q_1\defeq q_1$,
  $\phi\defeq ([0,T-s_2]\ni t\mapsto\phi(s_2+t)\in[0,\infty))$,
  $t_1\defeq t_2-s_2$,
  $t_2\defeq t_2-s_2$
  in the notation of
  Lemma 3.8 in~\cite{HHM2021}),
  \eqref{eq:local.Hoelder.variable.s},
  nonnegativity of $\bar{V}$,
  and Lemma~\ref{l:exp_mom}
  (applied for every $i\in\{0,1\}$ with
  $T\defeq s_2$,
  $s\defeq s_1$,
$X\defeq (X^{x_2}_{s_1,t})_{t\in[s_1,s_2]}$,
$\alpha\defeq \alpha_i$,
$\beta\defeq \beta_i$,
$V\defeq V_i$,
$s\defeq 0$,
$\bar{V}\defeq 0$
in the notation of Lemma~\ref{l:exp_mom})
  imply that
\begin{equation}  \begin{split}\label{eq:s-diff}
  &\|X_{s_1,t_2}^{x_2}-X_{s_2,t_2}^{x_2}\|_{L^{p}(\P;H)}
  \\&
\leq
\Big\|X_{s_1,s_2}^{x_2}-x_2\Big\|_{L^{p(1+\theta)}(\P;H)}
 \left(\prod_{i=0}^1\left(\left(\E\Big[\exp\Big(
 \tfrac{V_i(X_{s_1,s_2}^{x_2})}{e^{\alpha_i s_2}}\Big)
 \Big]\right)^{\frac{1}{2q_i}}\right)\right)
 \\&\quad\cdot \exp\!\left(
        \int_0^{T-s_2}  
        \Big(
        \constFun(s_2+r)
        +
        \tfrac{
          e^{-\alpha_{0}s_2}\beta_{ 0 } 
        }
        { q_0 e^{ \alpha_{ 0 } r } }
        + 
        \tfrac{ e^{-\alpha_{1}s_2}\beta_{ 1 } }
        { q_1 e^{ \alpha_{ 1 } r } }
        \Big)
        \,
        dr
        +
        \sum_{ i = 0 }^1 
        \tfrac{V_i(x_2)}{2q_ie^{\alpha_i s_2} } 
      \right)
\\&
\leq
c e^{\alpha_0 \gamma|s_2-s_1|}\Big|p(1+\theta)\gamma+e^{-\alpha_0 s_1}V_0(x_2)
+\textstyle\int_{s_1}^T\tfrac{\beta_0}{e^{\alpha_0u}}\,du\Big|^{\gamma}
\big(\sqrt{|s_2-s_1|}+p(1+\theta)\big)\sqrt{|s_2-s_1|}
\\&
\quad\cdot
\left(\prod_{i=0}^1
\left(
  \exp\!\left(
        \tfrac{V_i(x_2)}{e^{\alpha_i s_1} } 
        +
        \int_{s_1}^{s_2}  
        \tfrac{
          \beta_{ i } 
        }
        { e^{ \alpha_{ i } r } }
        \,
        dr
      \right)
  \right)^{\frac{1}{2q_i}}
  \right)
 \\&\quad\cdot \exp\!\left(
        \int_0^{T-s_2}  
        \Big(
        \constFun(s_2+r)
        +
        \tfrac{
          e^{-\alpha_{0}s_2}\beta_{ 0 } 
        }
        { q_0 e^{ \alpha_{ 0 } r } }
        + 
        \tfrac{ e^{-\alpha_{1}s_2}\beta_{ 1 } }
        { q_1 e^{ \alpha_{ 1 } r } }
        \Big)
        \,
        dr
        +
        \sum_{ i = 0 }^1 
        \tfrac{V_i(x_2)}{2q_ie^{\alpha_i s_2} } 
      \right)
 \end{split}     \end{equation}
Finally, the triangle inequality, \eqref{eq:local.Hoelder.fixed.s},
\eqref{eq:s-diff},
and
nonnegativity of $\phi,\alpha_0,\alpha_1,\beta_0,\beta_1,V_0,V_1$
yield that
\begin{equation}  \begin{split}
  &\|X_{s_1,t_1}^{x_1}-X_{s_2,t_2}^{x_2}\|_{L^{p}(\P;H)}
  \leq
  \|X_{s_1,t_1}^{x_1}-X_{s_1,t_2}^{x_2}\|_{L^{p}(\P;H)}
  +
  \|X_{s_1,t_2}^{x_2}-X_{s_2,t_2}^{x_2}\|_{L^{p}(\P;H)}
  \\&
  \leq
    \sqrt{|t_1-t_2|}c e^{\alpha_0\gamma T}\Big|p\gamma+e^{-\alpha_0 s_1}V_0(x_1)
    +\textstyle\int_{s_1}^T\tfrac{\beta_0}{e^{\alpha_0u}}\,du\Big|^{\gamma}
    \big(\sqrt{T}+p\big)
\\&\quad
+\|x_1-x_2\|_H
  \exp\!\left(
        \int_{s_1}^{T}  
        \Big(
        \constFun(r)
        +
        \tfrac{
          \beta_{ 0 } 
        }
        { q_0 e^{ \alpha_{ 0 } r } }
        + 
        \tfrac{ \beta_{ 1 } }
        { q_1 e^{ \alpha_{ 1 } r } }
        \Big)
        \,
        dr
        +
        \sum_{ i = 0 }^1 
        \tfrac{V_i(x_1)+V_i(x_2)}{2q_ie^{\alpha_i s_1} } 
      \right)
  \\&\quad+
c e^{\alpha_0 \gamma|s_2-s_1|}\Big|p(1+\theta)\gamma+e^{-\alpha_0 s_1}V_0(x_2)
+\textstyle\int_{s_1}^T\tfrac{\beta_0}{e^{\alpha_0u}}\,du\Big|^{\gamma}
\big(\sqrt{T}+p(1+\theta)\big)\sqrt{|s_2-s_1|}
\\&
\qquad\quad\cdot
  \exp\!\left(
        \int_{s_1}^{T}  
        \Big(
        \constFun(r)
        +
        \tfrac{
          \beta_{ 0 } 
        }
        { q_0 e^{ \alpha_{ 0 } r } }
        + 
        \tfrac{ \beta_{ 1 } }
        { q_1 e^{ \alpha_{ 1 } r } }
        \Big)
        \,
        dr
        +
        \sum_{ i = 0 }^1 
        \tfrac{V_i(x_2)}{q_i e^{\alpha_i s_1} } 
      \right)
  \! .
\end{split}     \end{equation}
This completes the proof of Lemma~\ref{lem:Hoelder}.
\end{proof}

\subsection{Moment estimates for the first derivative process}
The following lemma, Lemma~\ref{lem:moments:Zv}, provides
a moment estimate for spatial derivatives of solutions of SDEs.
\begin{lemma}[Moment estimates for the first derivative process] \label{lem:moments:Zv}
Assume Setting~\ref{s:exists:C0},
let $D\subseteq\mathcal{O}$ be an open set,
let $s\in[0,T]$, $t\in[s,T]$,
let $Y\colon\Omega\to D$, $Z\colon\Omega\to H$ be $\mathbb{F}_s$/$\mathcal{B}(H)$-measurable,
assume that $\sigma(\{X_{s,t}^x\colon x\in D\})$ and $\mathbb{F}_s$ are independent,
and
assume for all $\omega\in\Omega$ that $(D\ni x\mapsto X_{s,t}^x(\omega)\in H)\in C^1(D,H)$.
Then it holds that
\begin{align} \label{lem:moments:Z:eq2}
\begin{split}
& \left\| \tfrac{\partial}{\partial x}X_{s,t}^Y Z\right\|_{L^{p}(\P;H)} 
	 \leq  
\left\|Z
\exp\!\left(
      \int_s^{t} \!
        \Big(
        \constFun(r)
        +
        \sum_{i=0}^1
         \tfrac{
          \beta_{ i }
        }{
          q_i
           e^{ \alpha_{ i } r }
        }
        \Big)
        \,
        d r
        +
      \sum_{ i = 0 }^1
        \tfrac{
          V_{ i }( Y )
        }{
            q_ie^{ \alpha_{i} s }
        }
    \right)
 \right\|_{L^{p}(\P;H)}.
\end{split}
\end{align} 
\end{lemma}
\begin{proof}[Proof of Lemma~\ref{lem:moments:Zv}]
 Independence of $\sigma(\{X_{s,t}^x\colon x\in D\})$ and $\mathbb{F}_s$,
 a disintegration formula (e.g.~\cite[Lemma 2.3]{HJK+18}),
 the fact that
 for all $\omega\in\Omega$ it holds that $(D\ni x\mapsto X_{s,t}^x(\omega)\in H)\in C^1(D,H)$,
 Fatou's lemma (e.g.\ Lemma 3.10 in~\cite{HutzenthalerJentzen2015Memoires}),
 and Lemma~\ref{lem:Hoelder}
 (applied in the case $t>0$
  for every $y\in D$, $z\in H$, $h\in\{r\in\R\setminus\{0\}\colon y+zr\in D\}$
 with $T\defeq t$,
 $s_1\defeq s$, $s_2\defeq s$, $t_1\defeq t$, $t_2\defeq t$, $x_1\defeq y+zh$, $x_2\defeq y$
 in the notation of
 Lemma~\ref{lem:Hoelder})
 yield that
\begin{align}
\begin{split}
  & \left\| \tfrac{\partial}{\partial x}X_{s,t}^Y Z\right\|_{L^{p}(\P;H)}^{p}
  =\int_{D\times H} \E\Big[\Big\|\tfrac{\partial}{\partial x}X_{s,t}^y z\Big\|_H^{p}\Big]\P\Big((Y,Z)\in d(y,z)\Big)
  \\&
  =\int_{D\times H} \E\Big[\Big\|\liminf_{\{r\in (\R\setminus\{0\})\colon y+zr\in D\}\ni h\to0}\tfrac{\left(X_{s,t}^{y+zh}-X_{s,t}^{y}\right)}{h} \Big\|_H^{p}\Big]\P\Big((Y,Z)\in d(y,z)\Big)
  \\&
  \leq\int_{D\times H} \liminf_{\{r\in (\R\setminus\{0\})\colon y+zr\in D\}\ni h\to0}
   \E\Big[\Big\|\tfrac{\left(X_{s,t}^{y+zh}-X_{s,t}^{y}\right)}{h} \Big\|_H^{p}\Big]\P\Big((Y,Z)\in d(y,z)\Big)
  \\&
  \leq\int_{D\times H}\left( \liminf_{
  \Big\{\substack{r\in (\R\setminus\{0\})\colon\\
     y+zr\in D}\!\!\Big\}\ni h\to0}
  \tfrac{\|y+zh-y\|_H}{h}
  \exp\!\left(
      \int_s^{t} \!
      \Big(
        \constFun(r)
        +
        \sum_{i=0}^1
         \tfrac{
          \beta_{ i }
        }{
          q_ie^{ \alpha_{ i } r }
        }
        \Big)
        \,
        d r
        +
      \sum_{ i = 0 }^1
        \tfrac{
          V_{ i }( y + z h ) + V_{ i }( y )
        }{
           2q_ie^{ \alpha_{i} s }
        }
    \right)
    \right)^{p}
\\&
\qquad\qquad\qquad\qquad
\qquad\qquad\qquad\qquad
\qquad\qquad\qquad\qquad
\qquad\qquad
\P\Big((Y,Z)\in d(y,z)\Big)
\\&
=
\int_{D\times H}
\left(
\|z\|_H
\exp\!\left(
      \int_s^{t} \!
        \Big(
        \constFun(r)
        +
        \sum_{i=0}^1
         \tfrac{
          \beta_{ i }
        }{
          q_ie^{ \alpha_{ i } r }
        }
        \Big)
        \,
        d r
        +
      \sum_{ i = 0 }^1
        \tfrac{
          V_{ i }( y )
        }{
            q_ie^{ \alpha_{i} s }
        }
    \right)
    \right)^{p}
\P\Big((Y,Z)\in d(y,z)\Big)
\\&
=
\left\|Z
\exp\!\left(
      \int_s^{t} \!
        \Big(
        \constFun(r)
        +
        \sum_{i=0}^1
         \tfrac{
          \beta_{ i }
        }{
          q_ie^{ \alpha_{ i } r }
        }
        \Big)
        \,
        d r
        +
      \sum_{ i = 0 }^1
        \tfrac{
          V_{ i }( Y )
        }{
            q_ie^{ \alpha_{i} s }
        }
    \right)
 \right\|_{L^{p}(\P;H)}^{p}.
\end{split}
\end{align}
This implies \eqref{lem:moments:Z:eq2}
and finishes
the proof of Lemma~\ref{lem:moments:Zv}.
\end{proof}

\subsection{Existence of a $C^0$-solution}
The following theorem establishes existence of continuous
solutions of SDEs.
Theorem \ref{thm:exists:C0}
improves existing results
in~\cite{CoxHutzenthalerJentzen2013v3}
and also existing results on strong completeness, e.g., in 
\cite{Zhang2010,Attanasio2010,FlandoliGubinelliPriola2010,FangImkellerZhang2007,FangZhang2005,Schmalfuss1997,SchenkHoppe1996Deterministic,Li1994}.

\begin{theorem}[Existence of a $C^0$-solution] \label{thm:exists:C0}
Assume Setting~\ref{s:exists:C0},
assume that $V_0$, $V_1$ are bounded on every bounded subset
of $\mathcal{O}$,
and
assume that $\dim(H)<\infty$
and
$p\in(2\dim(H)+4,\infty)$.
Then
there exists a measurable function
$
  \mathcal{X} \colon \Delta_T \times \overline{\mathcal{O}} \times \Omega \to \overline{\mathcal{O}}
$
such that
\begin{enumerate}[(i)]
  \item  
for all 
$
  x \in \mathcal{O}, 
$ 
$ 
	s\in [0,T]
$ 
it holds a.s.~that 
$
  (\mathcal{X}^x_{s, t })_{t \in [s,T]} 
  = 
  (X^x_{s,t})_{t \in [s,T]}
$ 
and
\item
for every $ \omega \in \Omega $
it holds
that
$
  \mathcal{X}(\omega) \in C( \Delta_T \times \overline{\mathcal{O}}, \overline{\mathcal{O}})
$.
\end{enumerate}
\end{theorem}
\begin{proof}[Proof of Theorem~\ref{thm:exists:C0}]
Throughout this proof let $K_n\subseteq \R\times\R\times H$,
$n\in \N$, be the sets which satisfy for all $n\in \N$ that
$K_n=\{(s,t,x)\in \Delta_T\times\mathcal{O} \colon s^2+t^2+\|x\|_H^2 \leq n^2 \}$.
Lemma~\ref{lem:Hoelder},
the fact that $\int_0^T \constFun(r) \, dr <\infty$,
and
$\forall n\in\N\colon 
\sup\big(\big\{V_0(x)+V_1(x)\colon(s,t,x)\in K_n\big\}\cup\{0\}\big)<\infty$
yield for all $n\in\N$ that
\begin{equation}\label{eq:C0.ass.Hoelder} 
  \sup\bigg(\bigg\{
  \tfrac{ 
  \big(\E\big[  \|X_{s_1,t_1}^{x_1} - X_{s_2,t_2}^{x_2}   \|^{p}_{H}\big]\big)^{\frac{1}{p}}
  }
  {\left( |s_1-s_2|^2+|t_1-t_2|^2+\|x_1-x_2\|_H^{2}  \right)^{\frac{1}{4}}}
  \colon
  \substack{
  (s_1,t_1,x_1), (s_2,t_2,x_2) \in  K_n \colon
  \\
  (s_1,t_1,x_1)\neq (s_2,t_2,x_2)
  }
  \bigg\}\cup\{0\}\bigg)
  <\infty.
\end{equation}
In particular this implies for all $n\in\N$ that
\begin{equation}  \begin{split}
  &\sup\bigg(\bigg\{
  \Big(\E\Big[  \|X_{s,t}^{x}   \|^{p}_{H}\Big]\Big)^{\frac{1}{p}}
  \colon (s,t,x) \in  K_n
  \bigg\}\cup\{0\}\bigg)
  \\&
  \leq
  \sup\bigg(\bigg\{
  \frac{ 
  \big(\E\big[  \|X_{s,t}^{x} - X_{s,s}^{x}   \|^{p}_{H}\big]\big)^{\frac{1}{p}}
  }
  {\left( |s-s|^2+|t-s|^2+\|x-x\|_H^{2}  \right)^{\frac{1}{4}}}
  \sqrt{T}+\|x\|_H
  \colon (s,t,x) \in  K_n
  \bigg\}\cup\{0\}\bigg)
  <\infty.
\end{split}     \end{equation}
This, \eqref{eq:C0.ass.Hoelder},
Proposition~\ref{p:KolChen}
(applied with 
$H \defeq  \R\times\R \times H$,
$D\defeq \Delta_T \times \mathcal{O}$,
$E\defeq H$,
$F\defeq \overline{\mathcal{O}}$,
$\alpha\defeq \nicefrac12$,
$X\defeq \left( \Delta_T \times \mathcal{O} \ni (s,t,x) \mapsto X_{s,t}^x \in \overline{\mathcal{O}} \right)$
in the notation of Proposition~\ref{p:KolChen}),
and path continuity of $X_{s,\cdot}^{x}$, $s\in[0,T]$, $x\in\mathcal{O}$
establish the existence of a
measurable function
$
  \mathcal{X} \colon \Delta_T  \times \overline{\mathcal{O}} \times \Omega \to \overline{\mathcal{O}}
$
which satisfies that for all $\omega \in \Omega$ it holds that 
$\mathcal{X}(\omega) \in C(\Delta_T \times \overline{\mathcal{O}} , \overline{\mathcal{O}})$
and which satisfies
that
for all $x\in\mathcal{O}$, $s\in[0,T]$
it holds a.s.~that
$(\mathcal{X}_{s,t}^x)_{t\in[s,T]}=(X_{s,t}^x)_{t\in[s,T]}$.
This 
completes the proof of Theorem~\ref{thm:exists:C0}.
\end{proof}

\section{Existence of a $C^1$-solution}\label{sec:4}
In this section we prove 
a strong local H\"older estimate for the first
derivative process in Lemma \ref{2l:local.Lip.C2} below,
a moment estimate for the second derivative process
in Lemma \ref{lem:moments:ZZv} below,
and
establish existence of a continuously differentiable solution
under suitable assumptions
in Theorem \ref{thm:C1} below.
First, we introduce the setting for these results.
\begin{sett} \label{sett:exists:C1}
Let $( H, \left< \cdot , \cdot \right>_H, \left\| \cdot \right\|_H )$ and $( U, \left< \cdot , \cdot \right>_U, \left\| \cdot \right\|_U )$
be separable $\R$-Hilbert spaces,
let
$ T \in (0,\infty) $,
let $(\Omega, \F, \P, (\mathbb{F}_{t})_{t\in [0,T]})$ be a filtered probability space
satisfying the usual conditions,
let 
$
  (W_t)_{t\in[0,T]}
$
  be an $\textup{Id}_U$-cylindrical $(\mathbb{F}_t)_{t\in[0,T]}$-Wiener process,
let $\Delta_T =\{(s,t)\in [0,T]^2 \colon s \leq t\}$,
let $O \subseteq H$ be an open set,
let $\mathcal{O}\subseteq O$ be a convex set,
let $ \mu \in C^1(O,H)$, 
$ \sigma \in C^1(O, \HS(U,H))$,
for all $s\in [0,T]$, $x\in \mathcal{O}$
let $ X^x_{s,\cdot} \colon [s,T] \times \Omega \to \mathcal{O} $
be an $(\mathbb{F}_t)_{t\in[s,T]}$-adapted stochastic process
with continuous sample paths
which satisfies that for all $t\in [s,T]$ it holds a.s.~that
\begin{align} \label{eq:def:X}
  X^x_{s,t} = 
  x
  + \int_s^{ t } \mu(X^x_{s,r} ) \, dr
  +
  \int_s^{ t } \sigma(X^x_{s,r} ) \, dW_r,
\end{align}
let $ \alpha_0,\,\alpha_1,\beta_0,\,\beta_1,c \in [0,\infty)$,
$ 
  V_0 
$,
$
  V_1  \in C^{ 2 }( O , [0,\infty) ) 
$,
let
$ 
  \bar{V} \colon [0,T] \times \mathcal{O} \to [0,\infty)
$
be a measurable function,
assume
for all
$ i \in \{ 0, 1 \} $,
$t\in[0,T]$,
$x\in\mathcal{O}$
that 
$\P\big(	\int_t^T  \bar{V}(r, X_{t,r}^x)  \,dr <\infty\big)=1$ and
\begin{equation}
\begin{split}
  &\Big\langle
  \mu( x )
  ,
  (\nabla V_i)(x)
  \Big\rangle_H
  +
  \tfrac{ 1 }{ 2 }
  \operatorname{trace}\!\Big(
    \sigma(x) [\sigma(x)]^* 
    ( \operatorname{Hess} V_i )( x )
  \Big)
  \\&
  +
  \tfrac{ 
    1
  }{ 
    2 
    e^{ 
      \alpha_{ i } {t} 
    }
  }
    \|
      \sigma( x )^* ( \nabla V_{ i } )( x )
    \|_U^2
  +
  \mathbbm{1}_{
    \{ 1 \}
  }(i)
  \cdot
  \bar{V}(t,x)
\leq
  \alpha_{ i } V_{ i }(x)
  +
  \beta_{ i },
\end{split}
\end{equation}
let 
$
  \constFun \colon [0,T] \to [0,\infty)
$
be a measurable function which satisfies that
$\int_0^T \constFun(r) \, dr <\infty$,
let $p\in[2,\infty)$, $\theta,\delta\in[0,\infty)$, $q_0,q_1\in(0,\infty)$,
$\gamma\in[\tfrac{1}{p},\infty)$
satisfy that $\tfrac{\theta}{2p(1+\theta)^2(1+\delta)}
=\tfrac{1}{q_0}+\tfrac{1}{q_1}$,
assume that for all $t\in[0,T]$, $x,y\in\mathcal{O}$, $v\in H\setminus\{0\}$
it holds that
\begin{equation}  \begin{split}\label{eq:Lip.ass.strong}
  &\Big\langle v,\smallint_0^1\mu'(\lambda x+(1-\lambda)y)\,d\lambda \,\,v+\delta v
  \Big\rangle_H
  +\tfrac{1+\delta}{2}
  \Big\|\smallint_0^1\sigma'(\lambda x+(1-\lambda)y)\,d\lambda \,\,v
  \Big\|_{\HS(U,H)}^2
  \\&
  \tfrac{\big(p(1+\theta)^2(1+\delta)-1\big)\big\|
  \big\langle v,\smallint_0^1\sigma'(\lambda x+(1-\lambda)y)\,d\lambda\,\, v
  \big\rangle_H\big\|_{\HS(U,\R)}^2}{\|v\|_H^2}
  \leq \|v\|^2_H\cdot\Big(\phi(t)
  +\tfrac{V_0(x)+V_0(y)}{2q_0T e^{\alpha_0t}}
  +\tfrac{\bar{V}(t,x)+\bar{V}(t,y)}{2q_1e^{\alpha_1t}}
  \Big),
\end{split}     \end{equation}
assume for all $x\in\mathcal{O}$ that
\begin{align}
\begin{split}
 \max\left\{ \|\mu(x)\|_H , \|\sigma(x)\|_{\HS(U,H)} \right\}
 \leq  c(1+V_0(x))^{\gamma},
\end{split}
\end{align}
assume for all $x,y\in\mathcal{O}$ that
\begin{align} \label{eq:growth.Dmue}
\begin{split}
 &\max\left\{
    \left\|\smallint_0^1\mu'\big(\lambda x+(1-\lambda)y\big)\,d\lambda 
    \right\|_{L(H,H)} ,
    \left\|\smallint_0^1\sigma'\big(\lambda x+(1-\lambda)y\big)\,d\lambda 
    \right\|_{L(H,\HS(U,H))}
 \right\}
 \\&
 \leq  c\left(2+V_0(x)+V_0(y)\right)^{\gamma},
\end{split}
\end{align}
assume for all $x_1,x_2,x_3,x_4\in\mathcal{O}$
that
\begin{equation}  \begin{split}\label{eq:Dmu.localLip}
  &\max\Big\{\big\|\smallint_0^1
    \mu'\big(\lambda x_1+(1-\lambda)x_2\big)
      -\mu'\big(\lambda x_3+(1-\lambda)x_4\big)
    \,d\lambda
  \big\|_{L(H,H)},
  \\&\qquad\quad
  \big\|\smallint_0^1\sigma'\big(\lambda x_1+(1-\lambda)x_2\big)-\sigma'\big(\lambda x_3+(1-\lambda)x_4\big)
    \,d\lambda
  \big\|_{L(H,\HS(U,H))}
  \Big\}
  \\&
  \leq c\smallint_0^1\lambda\|x_1-x_3\|_H+(1-\lambda)\|x_2-x_4\|_H\,d\lambda
  \,\big(4+\smallsum_{j=1}^4 V_0(x_i)\big)^{\gamma},
\end{split}     \end{equation}
and
for all $(s,t)\in\Delta_T$,
$x\in \mathcal{O}$, $v\in H$, $h\in \R\setminus\{0\}$ with $ x + v h \in \mathcal{O}$
let
$
	D_{s,t}^{x,h}(v) \colon \Omega \to H
$
be the function which satisfies that
\begin{align} \label{eq:def:Zvy}
D_{s,t}^{x,h}(v)= 
\frac{X_{s,t}^{x+h v} -X_{s,t}^{x}}{h}.
\end{align}
\end{sett}

\begin{lemma}\label{l:C1.implies.C0}
  Assume Setting~\ref{sett:exists:C1}
  and let $t\in[0,T]$, $x,y\in \mathcal{O}$.
  Then it holds that
\begin{equation}  \begin{split}\label{eq:ass.41}
  &\big\langle x-y,\mu(x)-\mu(y)\big\rangle_H+\tfrac{1}{2}\big\|\sigma(x)-\sigma(y)\big\|^2_{\HS(U,H)}
  +\tfrac{2p(1+\theta)^2(1+\delta)-2}{2}
  \tfrac{
    \|\langle x-y,\sigma(x)-\sigma(y)\rangle_H\|_{\HS(U,\R)}^2
    }{\|x-y\|_H^2}
  \\&
  \leq \|x-y\|^2_H\cdot\Big(\phi(t)
  +\tfrac{V_0(x)+V_0(y)}{2q_0Te^{\alpha_0t}}
  +\tfrac{\bar{V}(t,x)+\bar{V}(t,y)}{2q_1e^{\alpha_1t}}
  \Big).
\end{split}     \end{equation}
\end{lemma}
\begin{proof}[Proof of Lemma~\ref{l:C1.implies.C0}]
  Convexity of $\mathcal{O}$,
  the fundamental theorem of calculus,
  and 
  assumption~\eqref{eq:Lip.ass.strong}
  (applied in the case $x\neq y$ with $v\defeq x-y$
  in the notation of 
  assumption~\eqref{eq:Lip.ass.strong})
  yield that
\begin{equation}  \begin{split}
  &\big\langle x-y,\mu(x)-\mu(y)\big\rangle_H+\tfrac{1}{2}\big\|\sigma(x)-\sigma(y)\big\|^2_{\HS(U,H)}
  +\tfrac{2p(1+\theta)^2(1+\delta)-2}{2}
  \tfrac{
    \|\langle x-y,\sigma(x)-\sigma(y)\rangle_H\|_{\HS(U,\R)}^2
    }{\|x-y\|_H^2}
  \\&
  =\big\langle x-y,\smallint_0^1\mu'\big(\lambda x+(1-\lambda)y\big)\,d\lambda \,(x-y)\big\rangle_H
  +\tfrac{1}{2}\big\|\smallint_0^1\sigma'\big(\lambda x+(1-\lambda)y\big)\,d\lambda \,(x-y)\big\|^2_{\HS(U,H)}
  \\&\qquad
  +(p(1+\theta)^2(1+\delta)-1)
    \tfrac{
    \big\|\big\langle x-y,\smallint_0^1\sigma'(\lambda x+(1-\lambda)y)\,d\lambda \,(x-y)\big\rangle_H\big\|_{\HS(U,\R)}^2}{\|x-y\|_H^2}
  \\&
  \leq \|x-y\|^2_H\cdot\Big(\phi(t)
  +\tfrac{V_0(x)+V_0(y)}{2q_0Te^{\alpha_0t}}
  +\tfrac{\bar{V}(t,x)+\bar{V}(t,y)}{2q_1e^{\alpha_1t}}
  \Big).
\end{split}     \end{equation}
 This completes the proof of Lemma~\ref{l:C1.implies.C0}.
\end{proof}

\begin{lemma}[Difference processes satisfy linear SDEs] \label{lem:Zv:eq}
Assume Setting~\ref{sett:exists:C1}
and let
$x\in \mathcal{O}$, $v\in H$, $h\in \R\setminus\{0\}$, $(s,t)\in\Delta_T$ satisfy
that $x+vh\in \mathcal{O}$.
Then
it holds a.s.~that
\begin{align} \label{eq:eq:Zvy}
\begin{split}
D_{s,t}^{x,h}(v)
	& = 
v +
\int_s^t \int_0^1 \mu' \big( X_{s,r}^{x} + \lambda (X_{s,r}^{x + v h} - X_{s,r}^{x}) \big) D_{s,r}^{x,h}(v) 
\, d\lambda \, d r
	\\
	& \quad
+ \int_s^t \int_0^1 \sigma' \big( X_{s,r}^{x} + \lambda (X_{s,r}^{x + v h} - X_{s,r}^{x}) \big) D_{s,r}^{x,h}(v) 
\, d\lambda \, d W_r.
\end{split}
\end{align}
\end{lemma}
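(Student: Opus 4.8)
The plan is to subtract the integral equations~\eqref{eq:def:X} satisfied by $X_{s,\cdot}^{x+vy}$ and by $X_{s,\cdot}^{x}$, to rewrite the resulting increments of $\mu$ and of $\sigma$ by means of the fundamental theorem of calculus along the line segment joining $X_{s,r}^{x}$ and $X_{s,r}^{x+vy}$, and finally to divide by~$y$ and invoke~\eqref{eq:def:Zvy}.

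First I would observe that convexity of $\mathcal{O}$ together with $X_{s,r}^{x}(\omega),X_{s,r}^{x+vy}(\omega)\in\mathcal{O}\subseteq O$ implies that $X_{s,r}^{x}(\omega)+\lambda(X_{s,r}^{x+vy}(\omega)-X_{s,r}^{x}(\omega))=(1-\lambda)X_{s,r}^{x}(\omega)+\lambda X_{s,r}^{x+vy}(\omega)\in\mathcal{O}\subseteq O$ for all $\lambda\in[0,1]$, $r\in[s,T]$, $\omega\in\Omega$, so that $\mu'$ and $\sigma'$ may be evaluated at these points. Joint measurability in $(\lambda,r,\omega)$ of $\mu'(X_{s,r}^{x}+\lambda(X_{s,r}^{x+vy}-X_{s,r}^{x}))$ and of the analogous $\HS(U,H)$-operator-valued expression with $\sigma'$ follows from continuity of $\mu',\sigma'$ and adaptedness and path continuity of $X_{s,\cdot}^{x},X_{s,\cdot}^{x+vy}$, and the $\lambda$-integrals $\smallint_0^1\mu'(X_{s,r}^{x}+\lambda(X_{s,r}^{x+vy}-X_{s,r}^{x}))\,d\lambda$ and $\smallint_0^1\sigma'(X_{s,r}^{x}+\lambda(X_{s,r}^{x+vy}-X_{s,r}^{x}))\,d\lambda$ exist as Bochner integrals and define adapted processes with continuous (hence locally bounded) sample paths on $[s,T]$; combined with path continuity of $D_{s,\cdot}^{v}(x,y)$ from Setting~\ref{sett:exists:C1}, this shows that the Lebesgue and It\^o integrals appearing in~\eqref{eq:eq:Zvy} are well-defined.

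Next I would apply the fundamental theorem of calculus to the $C^{1}$-functions $\mu\colon O\to H$ and $\sigma\colon O\to\HS(U,H)$ along the above segment and use the definition~\eqref{eq:def:Zvy} of $D_{s,r}^{v}(x,y)$ to obtain, for every $r\in[s,T]$, $\P$-a.s., that
\begin{equation}
\begin{split}
\mu(X_{s,r}^{x+vy})-\mu(X_{s,r}^{x})
&=\Big(\smallint_0^1\mu'\big(X_{s,r}^{x}+\lambda(X_{s,r}^{x+vy}-X_{s,r}^{x})\big)\,d\lambda\Big)\big(X_{s,r}^{x+vy}-X_{s,r}^{x}\big)
\\&
=y\smallint_0^1\mu'\big(X_{s,r}^{x}+\lambda(X_{s,r}^{x+vy}-X_{s,r}^{x})\big)D_{s,r}^{v}(x,y)\,d\lambda
\end{split}
\end{equation}
and, analogously, $\sigma(X_{s,r}^{x+vy})-\sigma(X_{s,r}^{x})=y\smallint_0^1\sigma'(X_{s,r}^{x}+\lambda(X_{s,r}^{x+vy}-X_{s,r}^{x}))D_{s,r}^{v}(x,y)\,d\lambda$. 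Substituting these two identities into the difference of the integral equations~\eqref{eq:def:X} for $X_{s,\cdot}^{x+vy}$ and $X_{s,\cdot}^{x}$, using that $X_{s,s}^{x+vy}-X_{s,s}^{x}=vy$, dividing by $y$, and using path continuity of both sides of~\eqref{eq:eq:Zvy} to render the exceptional null set independent of $t$ then yields~\eqref{eq:eq:Zvy}.

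I expect the only point requiring care to be the measurability and integrability bookkeeping for the $\lambda$-integrals together with the verification that they may legitimately be inserted into the already present Lebesgue and It\^o integrals in $r$; since we merely substitute the pointwise-in-$(r,\omega)$ identity furnished by the fundamental theorem of calculus into the existing integrals, no stochastic Fubini argument is needed, and the remainder reduces to linearity of the Lebesgue and It\^o integrals.
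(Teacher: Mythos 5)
Your proposal is correct and follows essentially the same route as the paper: subtract the two integral equations \eqref{eq:def:X} for $X_{s,\cdot}^{x+vy}$ and $X_{s,\cdot}^{x}$, divide by $y$, and rewrite the increments of $\mu$ and $\sigma$ via the fundamental theorem of calculus along the connecting segment, using \eqref{eq:def:Zvy}. The paper states this more tersely, while you additionally spell out the convexity, measurability, and null-set bookkeeping that the paper leaves implicit.
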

\begin{proof}[Proof of Lemma~\ref{lem:Zv:eq}]
Note that~\eqref{eq:def:Zvy},
\eqref{eq:def:X},
$\mu\in C^1(O,H)$, $\sigma\in C^1(O,\HS(U,H))$,
convexity of $\mathcal{O}$,
and
the fundamental theorem of calculus
imply that it holds a.s.~that
\begin{align}
\begin{split}
D_{s,t}^{x,h}(v)
	& = 
\tfrac{x+vh-x}{h} + 
\int_s^t \tfrac{ \mu(X_{s,r}^{x + v h}) - \mu(X_{s,r}^{x}) }{h} \, dr
+ \int_s^t \tfrac{ \sigma(X_{s,r}^{x + v h}) - \sigma(X_{s,r}^{x}) }{h} \, d W_r
\\
	& = 
v +
\int_s^t \int_0^1 \mu' \big( X_{s,r}^{x} + \lambda (X_{s,r}^{x + v h} - X_{s,r}^{x}) \big) D_{s,r}^{x,h}(v) 
\, d\lambda \, d r
	\\
	& \quad
+ \int_s^t \int_0^1 \sigma' \big( X_{s,r}^{x} + \lambda (X_{s,r}^{x + v h} - X_{s,r}^{x}) \big) D_{s,r}^{x,h}(v) 
\, d\lambda \, d W_r
.
\end{split}
\end{align}
This proves~\eqref{eq:eq:Zvy}
and finishes
the proof of Lemma~\ref{lem:Zv:eq}.
\end{proof}

%


\subsection{Strong local H\"older estimate}\label{sec:4.1}
The following lemma proves strong local H\"older continuity
of the difference quotients.

\begin{lemma}[Strong local H\"older estimate for difference quotients]\label{l:local.Lip.C1}
  Assume Setting~\ref{sett:exists:C1} and let 
  $s_1,s_2,t_1,t_2\in[0,T]$, $x_1,x_2\in\mathcal{O}$, $v_1,v_2\in H$, $h_1,h_2\in\R\setminus\{0\}$ satisfy that
  $s_1\leq t_1$, $s_2\leq t_2$, $x_1+v_1h_1,x_2+v_2h_2\in \mathcal{O}$,
  and $s_1\leq s_2$.
  Then it holds that
  \begin{equation}  \begin{split}
    &\Big\|
     D_{s_1,t_1}^{x_1,h_1}(v_1)-D_{s_2,t_2}^{x_2,h_2}(v_2)
     \Big\|_{L^{p}(\P;H)}
   \\&
   \leq
 \bigg[
  \Big(\Big(\|x_1-x_2\|_H+\|v_1h_1-v_2h_2\|_H\Big)\sqrt{t_2-s_2}
  +\sqrt{|s_1-s_2|}+
  \sqrt{|t_1-t_2|}\Big)
  \|v_1\|_H
  +\|v_1-v_2\|_H
  \bigg]
  \\&\quad
  \cdot
    e^{2\alpha_0 \gamma T}
    \left(\tfrac{4p(1+\theta)^2(1+\delta)(1+\gamma)}{\min\{\delta,1\}}
    +2\sqrt{T}
    +\smallint_0^T \tfrac{2\beta_0}{e^{\alpha_0 r}}\,dr
    +2\max_{\iota\in\{0,1\},j\in\{1,2\}}
    V_0(x_j+\iota v_jh_j)
  \right)^{2\gamma+2}
  \\&\quad
  \cdot
  (1+c)^2\max_{\iota\in\{0,1\},j\in\{1,2\}}
  \exp\!\left(
        3\int_{0}^{T}  
        \Big(
        \constFun(r)
        +
        \sum_{i=0}^1
        \tfrac{
          \beta_{ i } 
        }{
          q_{i} e^{ \alpha_{ i } r }
        }
        \Big)
        \,
        dr
        +
        3\sum_{ i = 0 }^1 
        \tfrac{V_i(x_j+\iota v_jh_j)}{ q_{i}e^{\alpha_i s_1} } 
      \right).
  \end{split}     \end{equation}
\end{lemma}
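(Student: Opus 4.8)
The plan is to transfer the proof of Lemma~\ref{lem:Hoelder} to the level of the difference process $D$. First I would use the triangle inequality together with the cocycle property of the SDE to reduce to the case $s_1=s_2=:s$. Assuming $s_1\le s_2$ and writing $X^{z}_{s_1,t}=X^{X^{z}_{s_1,s_2}}_{s_2,t}$ and $X^{x_2+v_2y_2}_{s_1,s_2}=X^{x_2}_{s_1,s_2}+y_2\,D^{v_2}_{s_1,s_2}(x_2,y_2)$, the ``initial-time shift'' contribution $D^{v_2}_{s_1,t_2}(x_2,y_2)-D^{v_2}_{s_2,t_2}(x_2,y_2)$ becomes the difference of two linear SDEs started at time $s_2$ whose initial conditions differ by $D^{v_2}_{s_1,s_2}(x_2,y_2)-v_2=D^{v_2}_{s_1,s_2}(x_2,y_2)-D^{v_2}_{s_1,s_1}(x_2,y_2)$ and whose coefficients differ through $X^{x_2}_{s_1,r}-X^{x_2}_{s_2,r}$; by Lemma~\ref{lem:Hoelder} and Lemma~\ref{lem:moments:Zv} both of these are of order $\sqrt{|s_2-s_1|}$ in the relevant $L^{\cdot}(\P;H)$-norm, so this contribution is handled by exactly the same SDE-comparison argument as the main term below. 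Similarly I would peel off $D^{v_2}_{s,t_1}(x_2,y_2)-D^{v_2}_{s,t_2}(x_2,y_2)=\int_{t_2}^{t_1}(\cdots)\,dr+\int_{t_2}^{t_1}(\cdots)\,dW_r$ and estimate it by the Burkholder--Davis--Gundy and H\"older inequalities together with the growth bound~\eqref{eq:growth.Dmue} on $\mu',\sigma'$ and the moment bound of Lemma~\ref{lem:moments:Zv}, which supplies the factor $\sqrt{|t_1-t_2|}$.

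After these reductions it remains to bound $\|\Delta_{t_1}\|_{L^{pq/(p+q)}(\P;H)}$ where $\Delta_r:=D^{v_1}_{s,r}(x_1,y_1)-D^{v_2}_{s,r}(x_2,y_2)$. By Lemma~\ref{lem:Zv:eq} the process $\Delta$ solves the linear SDE $d\Delta_r=(\mathcal A_r\Delta_r+\mathcal I^{\mu}_r)\,dr+(\mathcal B_r\Delta_r+\mathcal I^{\sigma}_r)\,dW_r$ with $\Delta_s=v_1-v_2$, where $\mathcal A_r=\int_0^1\mu'((1-\lambda)X^{x_2}_{s,r}+\lambda X^{x_2+v_2y_2}_{s,r})\,d\lambda$, $\mathcal B_r$ is the analogous operator built from $\sigma'$, and the inhomogeneities are $\mathcal I^{\mu}_r=\big(\int_0^1[\mu'((1-\lambda)X^{x_1}_{s,r}+\lambda X^{x_1+v_1y_1}_{s,r})-\mu'((1-\lambda)X^{x_2}_{s,r}+\lambda X^{x_2+v_2y_2}_{s,r})]\,d\lambda\big)D^{v_1}_{s,r}(x_1,y_1)$ and $\mathcal I^{\sigma}_r$ the same with $\sigma'$. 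Applying the local Lipschitz estimate~\eqref{eq:Dmu.localLip} (and its counterpart for $\sigma'$) with the points $X^{x_1+v_1y_1}_{s,r},X^{x_1}_{s,r},X^{x_2+v_2y_2}_{s,r},X^{x_2}_{s,r}$ in place of $x_1,x_2,x_3,x_4$ shows that $\|\mathcal I^{\mu}_r\|_H+\|\mathcal I^{\sigma}_r\|_{\HS(U,H)}$ is at most $\max\{c_2,c_3\}$ times $\big(\|X^{x_1}_{s,r}-X^{x_2}_{s,r}\|_H+\|X^{x_1+v_1y_1}_{s,r}-X^{x_2+v_2y_2}_{s,r}\|_H\big)$ times $\big(4+\smallsum_{j\in\{1,2\},\,\iota\in\{0,1\}}V_0(X^{x_j+\iota v_jy_j}_{s,r})\big)^{\gamma}$ times $\|D^{v_1}_{s,r}(x_1,y_1)\|_H$; this carries the desired small prefactor coming from the differences of the starting data.

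I would then apply the stochastic Gronwall inequality, Proposition~\ref{prop:moments:hilbert}, to $\Delta$ with the exponent $p$ in hypothesis~\eqref{eq:lin.growth} and with the remaining free exponent equal to $q$, so that the conclusion controls the $L^{pq/(p+q)}(\P;H)$-norm. To verify~\eqref{eq:lin.growth}, decompose $\langle\Delta_r,\mathcal A_r\Delta_r+\mathcal I^{\mu}_r\rangle_H+\tfrac12\|\mathcal B_r\Delta_r+\mathcal I^{\sigma}_r\|^2+\tfrac{p-2}{2}\|\langle\Delta_r,\mathcal B_r\Delta_r+\mathcal I^{\sigma}_r\rangle\|^2/\|\Delta_r\|^2$ into the ``diagonal'' part, in which only $\Delta_r$ occurs, and the ``mixed'' part coupling $\Delta_r$ with $\mathcal I^{\mu}_r,\mathcal I^{\sigma}_r$. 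The diagonal part is bounded exactly by assumption~\eqref{eq:Lip.ass.strong} applied with $h=\Delta_r$ and the points $X^{x_2+v_2y_2}_{s,r},X^{x_2}_{s,r}$; the crucial observation is that the left-hand side of~\eqref{eq:Lip.ass.strong} carries spare room $\delta\|\Delta_r\|^2+\tfrac{\delta}{2}\|\mathcal B_r\Delta_r\|^2+(\tfrac{p}{2}+p\delta)\,\|\langle\Delta_r,\mathcal B_r\Delta_r\rangle\|^2/\|\Delta_r\|^2$ beyond the coefficients $1,\tfrac12,\tfrac{p-2}{2}$ in~\eqref{eq:lin.growth}, and by Young's inequality this room absorbs all the mixed terms into $|\beta_r|^2=C_{\delta,p}(\|\mathcal I^{\mu}_r\|^2+\|\mathcal I^{\sigma}_r\|^2)$, leaving $\alpha_r$ equal to the right-hand side of~\eqref{eq:Lip.ass.strong} along $X^{x_2}_{s,r},X^{x_2+v_2y_2}_{s,r}$. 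Proposition~\ref{prop:moments:hilbert} then bounds $\|\Delta_{t_1}\|_{L^{pq/(p+q)}(\P;H)}$ by $\|v_1-v_2\|_H$ plus a constant multiple of $(\int_s^{T}\E[|\beta_r|^p]\,dr)^{1/p}$, times $\|\exp(\int_s^T\alpha_r\,dr)\|_{L^q(\P;\R)}$, times an explicit exponential in $T$ coming from the free parameter. The factor $\|\exp(\int_s^T\alpha_r\,dr)\|_{L^q(\P;\R)}$ is controlled by Lemma~\ref{lem:multiple_exp:guess}, which supplies the $\exp\!\big(3\int_0^T\constFun(r)+\smallsum_i\tfrac{\beta_i}{q_ie^{\alpha_ir}}\,dr+3\smallsum_i\tfrac{V_i}{q_i}\big)$-type factor; and $(\int_s^T\E[|\beta_r|^p]\,dr)^{1/p}$ is estimated by raising the inhomogeneity bound to the power $p$, integrating in $r$, and separating the three factors by H\"older's inequality in $\omega$, using Lemma~\ref{lem:Hoelder} for $\|X^{x_1}_{s,r}-X^{x_2}_{s,r}\|_{L^{\cdot}(\P;H)}\le\|x_1-x_2\|_H\exp(\cdots)$ and $\|X^{x_1+v_1y_1}_{s,r}-X^{x_2+v_2y_2}_{s,r}\|_{L^{\cdot}(\P;H)}\le(\|x_1-x_2\|_H+\|v_1y_1-v_2y_2\|_H)\exp(\cdots)$ (whose hypotheses follow from Lemma~\ref{l:C1.implies.C0}, which upgrades~\eqref{eq:Lip.ass.strong} to the monotonicity condition of Setting~\ref{s:exists:C0} with effective exponent $2p(1+\delta)$), Lemma~\ref{l:exp_mom.moments} (and Lemma~\ref{l:exp_mom}) for the moments of $\big(4+\smallsum_{j,\iota}V_0(X^{x_j+\iota v_jy_j}_{s,r})\big)^{\gamma}$, and Lemma~\ref{lem:moments:Zv} for $\|D^{v_1}_{s,r}(x_1,y_1)\|_{L^{\cdot}(\P;H)}\le\|v_1\|_H\exp(\cdots)$ (which is the source of the overall factor $\|v_1\|_H$); the hypotheses $q\delta>2p(1+\delta)$ and $\gamma pq(1+\delta)\ge(q-2p)\delta$ are precisely what make these three H\"older exponents simultaneously admissible and compatible with the target exponent $pq/(p+q)$. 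Collecting the three contributions by the triangle inequality yields the asserted estimate.

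The step I expect to be the main obstacle is this last one: coordinating the exponent used in Proposition~\ref{prop:moments:hilbert}, the Young parameters in the verification of~\eqref{eq:lin.growth}, and the three H\"older exponents so that the diagonal monotonicity estimate, the absorption of every mixed term, and the finiteness of the needed moments of $\beta_r$ and of $\exp(\int\alpha)$ all hold at once; in particular one must check that the spare $\delta$-room in~\eqref{eq:Lip.ass.strong} suffices to absorb simultaneously the drift mixed term $\langle\Delta_r,\mathcal I^{\mu}_r\rangle$, the mixed term in $\tfrac12\|\mathcal B_r\Delta_r+\mathcal I^{\sigma}_r\|^2$, and the mixed term in $\tfrac{p-2}{2}\|\langle\Delta_r,\mathcal B_r\Delta_r+\mathcal I^{\sigma}_r\rangle\|^2/\|\Delta_r\|^2$. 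A secondary technical point is setting up the reduction to $s_1=s_2$ via the cocycle property so that the resulting ``generalized'' difference quotients, which have random ($\mathbb F_{s_2}$-measurable) base points and directions, can still be fed into the same linear-SDE machinery together with Lemma~\ref{lem:Hoelder} and Lemma~\ref{lem:moments:Zv}.
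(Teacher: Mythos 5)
Your proposal is correct and follows essentially the same route as the paper's proof: write the difference of the two difference-quotient processes as a linear SDE with inhomogeneities controlled via~\eqref{eq:Dmu.localLip}, verify the hypothesis of Proposition~\ref{prop:moments:hilbert} from~\eqref{eq:Lip.ass.strong} by absorbing the mixed terms into the $\delta$-slack with Young's inequality, bound the exponential weight by Lemma~\ref{lem:multiple_exp:guess} and the inhomogeneities by a three-fold H\"older splitting using Lemma~\ref{lem:Hoelder} and Lemma~\ref{l:exp_mom.moments}, and treat the temporal shifts by a Burkholder--Davis--Gundy estimate with~\eqref{eq:growth.Dmue}. The only deviations are cosmetic: the paper handles the $s$-shift jointly with the spatial shift by running the difference as an It\^o process from time $s_2$ with random initial value $D^{v_1}_{s_1,s_2}(x_1,y_1)-v_2$ (conditioning on $\mathbb{F}_{s_2}$) rather than reducing to a common starting time via the cocycle property, and the moment bound you attribute to Lemma~\ref{lem:moments:Zv} is, for finite $y$, obtained directly from Lemma~\ref{lem:Hoelder} as in the paper.
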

\begin{proof}[Proof of Lemma~\ref{l:local.Lip.C1}]
  Throughout this proof let $Y,a,\zeta^{\mu}\colon[0,T-s_2]\times\Omega\to H$, $b,\zeta^{\sigma}\colon[0,T-s_2]\times\Omega\to\HS(U,H)$,
  and $\eta\colon[0,T-s_2]\times\Omega\to L(H,\HS(U,H))$
  be the functions which satisfy for all $r\in[0,T-s_2]$ that
  \begin{align}
    \label{eq:ar}
    a_r&=
     \int_0^1
     \Big(
     \mu' \big( X_{s_1,s_2+r}^{x_1} + \lambda (X_{s_1,s_2+r}^{x_1 + v_1 h_1} - X_{s_1,s_2+r}^{x_1}) \big) D_{s_1,s_2+r}^{x_1,h_1}(v_1)
     \\&\qquad\qquad\nonumber
     -
     \mu' \big( X_{s_2,s_2+r}^{x_2} + \lambda (X_{s_2,s_2+r}^{x_2 + v_2 h_2} - X_{s_2,s_2+r}^{x_2}) \big) D_{s_2,s_2+r}^{x_2,h_2}(v_2)
     \Big)
     \,d\lambda,
  \\
    \label{eq:br}
    b_r&=
     \int_0^1
     \Big(
     \sigma' \big( X_{s_1,s_2+r}^{x_1} + \lambda (X_{s_1,s_2+r}^{x_1 + v_1 h_1} - X_{s_1,s_2+r}^{x_1}) \big) D_{s_1,s_2+r}^{x_1,h_1}(v_1)
     \\&\qquad\qquad\nonumber
     -
     \sigma' \big( X_{s_2,s_2+r}^{x_2} + \lambda (X_{s_2,s_2+r}^{x_2 + v_2 h_2} - X_{s_2,s_2+r}^{x_2}) \big) D_{s_2,s_2+r}^{x_2,h_2}(v_2)
     \Big)
     \,d\lambda,
  \\
    \label{eq:Yr}
     Y_r&=D_{s_1,s_2+r}^{x_1,h_1}(v_1)-D_{s_2,s_2+r}^{x_2,h_2}(v_2),
   \\
     \eta_r&=\label{eq:etar}
     \int_0^1
     \sigma' \big( X_{s_2,s_2+r}^{x_2} + \lambda (X_{s_2,s_2+r}^{x_2 + v_2 h_2} - X_{s_2,s_2+r}^{x_2}) \big)\,d\lambda,
   \\
     \zeta_r^{\mu}&=\label{eq:zeta.mue}
     \int_0^1
     \Big(
     \mu' \big(  \lambda X_{s_1,s_2+r}^{x_1 + v_1 h_1} +(1-\lambda) X_{s_1,s_2+r}^{x_1} \big)
     \\&\qquad\qquad\nonumber
     -
     \mu' \big(  \lambda X_{s_2,s_2+r}^{x_2 + v_2 h_2} +(1-\lambda) X_{s_2,s_2+r}^{x_2} \big)
     \Big)
     \,d\lambda D_{s_1,s_2+r}^{x_1,h_1}(v_1),
   \\
     \zeta_r^{\sigma}&=\label{eq:zeta.sigma}
     \int_0^1
     \Big(
     \sigma' \big(  \lambda X_{s_1,s_2+r}^{x_1 + v_1 h_1} +(1-\lambda) X_{s_1,s_2+r}^{x_1} \big)
     \\&\qquad\qquad\nonumber
     -
     \sigma' \big(  \lambda X_{s_2,s_2+r}^{x_2 + v_2 h_2} +(1-\lambda) X_{s_2,s_2+r}^{x_2} \big)
     \Big)
     \,d\lambda D_{s_1,s_2+r}^{x_1,h_1}(v_1).
  \end{align}
  Note that $Y,a,b,\eta,\zeta^{\mu},\zeta^{\sigma}$
  are $(\mathbb{F}_{s_2+r})_{r\in[0,T-s_2]}$-adapted stochastic processes
  with continuous sample paths 
  and, therefore, measurable.
  Lemma~\ref{lem:Zv:eq} implies that for all $t\in[0,T-s_2]$ it holds a.s.~that
  \begin{equation}  \begin{split}\label{eq:D.Y}
     &Y_t
     =D_{s_1,s_2+t}^{x_1,h_1}(v_1)-D_{s_2,s_2+t}^{x_2,h_2}(v_2)
     =
     D_{s_1,s_2}^{x_1,h_1}(v_1)-v_2
     +
     \int_{s_2}^{s_2+t}a_{r-s_2}\,dr
     +
     \int_{s_2}^{s_2+t}b_{r-s_2}\,dW_r
     \\&
     =
     D_{s_1,s_2}^{x_1,h_1}(v_1)-v_2
     +
     \int_{0}^{t}a_{r}\,dr
     +
     \int_{0}^{t}b_{r}\,d(W_{s_2+r}-W_{s_2}).
  \end{split}     \end{equation}
  We consider the one-sided affine-linear growth condition for the It\^o process $Y$.
  Equations~\eqref{eq:ar}, \eqref{eq:Yr},
  and \eqref{eq:zeta.mue} imply for all $r\in[0,T-s_2]$ that
  \begin{equation}  \begin{split}\label{eq:for.a}
    a_r
    &=
     \int_0^1
     \mu' ( X_{s_1,s_2+r}^{x_1} + \lambda (X_{s_1,s_2+r}^{x_1 + v_1 h_1} - X_{s_1,s_2+r}^{x_1}) ) D_{s_1,s_2+r}^{x_1,h_1}(v_1)
     \\&\quad
     -
     \mu' ( X_{s_2,s_2+r}^{x_2} + \lambda (X_{s_2,s_2+r}^{x_2 + v_2 h_2} - X_{s_2,s_2+r}^{x_2}) ) D_{s_2,s_2+r}^{x_2,h_2}(v_2)
     \,d\lambda
   \\&
     =\smallint_0^1
     \mu' ( X_{s_2,s_2+r}^{x_2} + \lambda (X_{s_2,s_2+r}^{x_2 + v_2 h_2} - X_{s_2,s_2+r}^{x_2}) ) 
     \,d\lambda\, Y_r
   \\&\quad
   +
     \smallint_0^1\mu' (\lambda X_{s_1,s_2+r}^{x_1 + v_1 h_1} +(1-\lambda) X_{s_1,s_2+r}^{x_1} )
     -
     \mu' (  \lambda X_{s_2,s_2+r}^{x_2 + v_2 h_2} +(1-\lambda) X_{s_2,s_2+r}^{x_2} )
     \,d\lambda \,D_{s_1,s_2+r}^{x_1,h_1}(v_1)
   \\&
     =\smallint_0^1
     \mu' ( X_{s_2,s_2+r}^{x_2} + \lambda (X_{s_2,s_2+r}^{x_2 + v_2 h_2} - X_{s_2,s_2+r}^{x_2}) ) 
     \,d\lambda\, Y_r+\zeta_r^{\mu}.
  \end{split}     \end{equation}
  Analogously, equations~\eqref{eq:br}, \eqref{eq:Yr}, \eqref{eq:etar}, and~\eqref{eq:zeta.sigma}
  imply for all $r\in[0,T-s_2]$ that
  \begin{equation}  \begin{split}\label{eq:for.b}
    b_r&=
     \smallint_0^1
     \sigma' \big( X_{s_2,s_2+r}^{x_2} + \lambda (X_{s_2,s_2+r}^{x_2 + v_2 h_2} - X_{s_2,s_2+r}^{x_2}) \big) 
     \,d\lambda\, Y_r+\zeta_r^{\sigma}
   =
    \eta_rY_r+\zeta^{\sigma}_r.
  \end{split}     \end{equation}
  Equation~\eqref{eq:for.a},
  the Cauchy-Schwarz inequality, and Young's inequality
  yield for all $r\in[0,T-s_2]$ that
  \begin{equation}  \begin{split}\label{eq:estimate.a}
    \langle Y_r,a_r\rangle_H
    &\leq \left\langle Y_r,
     \smallint_0^1
     \mu' \big( X_{s_2,s_2+r}^{x_2} + \lambda (X_{s_2,s_2+r}^{x_2 + v_2 h_2} - X_{s_2,s_2+r}^{x_2}) \big) 
     \,d\lambda \,Y_r
     \right\rangle_H
    +\big\| Y_r\big\|_H
    \big\| \zeta_r^{\mu}\big\|_H
   \\&
    \leq\left\langle Y_r,
     \smallint_0^1
     \mu' \big( X_{s_2,s_2+r}^{x_2} + \lambda (X_{s_2,s_2+r}^{x_2 + v_2 h_2} - X_{s_2,s_2+r}^{x_2}) \big) 
     \,d\lambda \,Y_r+\delta Y_r
     \right\rangle _H
   +\tfrac{1}{4\delta}
   \big\|\zeta_r^{\mu}   \big\|_{H}^2.
  \end{split}     \end{equation}
  Similarly, equation~\eqref{eq:for.b}, the Cauchy-Schwarz inequality, and Young's inequality
  imply for all $r\in[0,T-s_2]$ that
  \begin{equation}  \begin{split}\label{eq:estimate.b}
    &\tfrac{1}{2}\|b_r\|_{\HS(U,H)}^2
    +\tfrac{p(1+\theta)-2}{2}\tfrac{\left\|
    \langle 
     Y_r, b_r
    \rangle_H
    \right\|_{\HS(U,\R)}^2
    }{
      \|Y_r\|_H^2
    }
    =
    \tfrac{1}{2}\|\eta_rY_r+\zeta^{\sigma}_r\|_{\HS(U,H)}^2
    +\tfrac{p(1+\theta)-2}{2}\tfrac{\left\|
    \langle 
     Y_r, \eta_rY_r+\zeta^{\sigma}_r
    \rangle_H
    \right\|_{\HS(U,\R)}^2
    }{
      \|Y_r\|_H^2
    }
    \\&
    =
    \tfrac{1}{2}\|\eta_rY_r\|_{\HS(U,H)}^2
    +\langle \eta_rY_r,\zeta^{\sigma}_r\rangle_{\HS(U,H)}
    +
    \tfrac{1}{2}\|\zeta^{\sigma}_r\|_{\HS(U,H)}^2
    \\&\qquad
    +\tfrac{p(1+\theta)-2}{2}\tfrac{\left\|
    \langle 
     Y_r, \eta_rY_r
    \rangle_H
    \right\|_{\HS(U,\R)}^2
    }{
      \|Y_r\|_H^2
    }
    +\tfrac{p(1+\theta)-2}{2}\tfrac{
    2\left\langle \langle Y_r,\eta_rY_r\rangle_H,\langle Y_r,\zeta^{\sigma}_r\rangle_H
    \right\rangle_{\HS(U,\R)}
    }{
      \|Y_r\|_H^2
    }
    +\tfrac{p(1+\theta)-2}{2}\tfrac{\left\|
    \langle 
     Y_r, \zeta^{\sigma}_r
    \rangle_H
    \right\|_{\HS(U,\R)}^2
    }{
      \|Y_r\|_H^2
    }
    \\&
    \leq
    \tfrac{1}{2}\|\eta_rY_r\|_{\HS(U,H)}^2
    +\| \eta_rY_r\|_{\HS(U,H)}\left\|\zeta^{\sigma}_r\right\|_{\HS(U,H)}
    +
    \tfrac{1}{2}\|\zeta^{\sigma}_r\|_{\HS(U,H)}^2
    \\&\qquad
    +\tfrac{p(1+\theta)-2}{2}\tfrac{\left\|
    \langle 
     Y_r, \eta_rY_r
    \rangle_H
    \right\|_{\HS(U,\R)}^2
    }{
      \|Y_r\|_H^2
    }
    +\tfrac{p(1+\theta)-2}{2}\tfrac{
    2\left\| \langle Y_r,\eta_rY_r\rangle_H\right\|_{\HS(U,\R)}
    \left\|\langle Y_r,\zeta^{\sigma}_r\rangle_H\right\|_{\HS(U,\R)}
    }{
      \|Y_r\|_H^2
    }
    +\tfrac{p(1+\theta)-2}{2}\tfrac{\left\|
    \langle 
     Y_r, \zeta^{\sigma}_r
    \rangle_H
    \right\|_{\HS(U,\R)}^2
    }{
      \|Y_r\|_H^2
    }
    \\&
    \leq
    \tfrac{1+\delta}{2}\|\eta_rY_r\|_{\HS(U,H)}^2
    +\tfrac{(p(1+\theta)-2)(1+\delta)}{2}\tfrac{\left\|
    \langle 
     Y_r, \eta_rY_r
    \rangle_H
    \right\|_{\HS(U,\R)}^2
    }{
      \|Y_r\|_H^2
    }
    +
    (\tfrac{1+\delta^{-1}}{2}+\tfrac{(p(1+\theta)-2)(1+\delta^{-1})}{2})\|\zeta^{\sigma}_r\|_{\HS(U,H)}^2
    \\&
    =\tfrac{1+\delta}{2}\|\eta_rY_r\|_{\HS(U,H)}^2
    +\tfrac{(p(1+\theta)-2)(1+\delta)}{2}\tfrac{\left\|
    \langle 
     Y_r, \eta_rY_r
    \rangle_H
    \right\|_{\HS(U,\R)}^2
    }{
      \|Y_r\|_H^2
    }
    +
    \tfrac{(p(1+\theta)-1)(1+\delta^{-1})}{2}\|\zeta^{\sigma}_r\|_{\HS(U,H)}^2.
  \end{split}     \end{equation}
  Next, \eqref{eq:estimate.a}, \eqref{eq:estimate.b}, \eqref{eq:etar},
  the fact that $(p(1+\theta)-2)(1+\delta)\leq 2p(1+\theta)^2(1+\delta)-2
  $,
  and the hypothesis~\eqref{eq:Lip.ass.strong}
  (applied for every $r\in[0,T-s_2]$ with $t\defeq s_2+r$,
  $x\defeq X_{s_2,s_2+r}^{x_2+v_2h_2}$,
  $y\defeq X_{s_2,s_2+r}^{x_2}$,
  $v\defeq Y_r$ in the notation of 
  hypothesis~\eqref{eq:Lip.ass.strong})
  imply for all $r\in[0,T-s_2]$ that
  \begin{equation}  \begin{split}
    &\langle 
     Y_r, a_r
    \rangle_H
    +\tfrac{1}{2}\|b_r\|_{\HS(U,H)}^2
    +\tfrac{p(1+\theta)-2}{2}\tfrac{\left\|
    \langle 
     Y_r, b_r
    \rangle_H
    \right\|_{\HS(U,\R)}^2
    }{
      \|Y_r\|_H^2
    }
    \\&
    \leq\left\langle Y_r,
     \smallint_0^1
     \mu' \big( X_{s_2,s_2+r}^{x_2} + \lambda (X_{s_2,s_2+r}^{x_2 + v_2 h_2} - X_{s_2,s_2+r}^{x_2}) \big) 
     \,d\lambda \,Y_r+\delta Y_r
     \right\rangle _H
    +\tfrac{1+\delta}{2}\|\eta_rY_r\|_{\HS(U,H)}^2
   \\&
   \qquad
    +\tfrac{(p(1+\theta)-2)(1+\delta)}{2}\tfrac{\left\|
    \langle 
     Y_r, \eta_rY_r
    \rangle_H
    \right\|_{\HS(U,\R)}^2
    }{
      \|Y_r\|_H^2
    }
    +
    \tfrac{(p(1+\theta)-1)(1+\delta^{-1})}{2}\|\zeta^{\sigma}_r\|_{\HS(U,H)}^2
  + \tfrac{1}{4\delta}
   \|\zeta_r^{\mu}   \|_{H}^2
    \\&
    \leq \|Y_r\|_H^2\left(\phi(s_2+r)
  +\tfrac{V_0(X_{s_2,s_2+r}^{x_2+v_2h_2})+V_0(X_{s_2,s_2+r}^{x_2})}{2q_0Te^{\alpha_0(s_2+r)}}
  +\tfrac{\bar{V}(s_2+r,X_{s_2,s_2+r}^{x_2+v_2h_2})+\bar{V}(s_2+r,X_{s_2,s_2+r}^{x_2})}{2q_1e^{\alpha_1(s_2+r)}}
  \right)
  \\&\qquad
    +
    \tfrac{(p(1+\theta)-1)(1+\delta^{-1})}{2}\|\zeta^{\sigma}_r\|_{\HS(U,H)}^2
  + \tfrac{1}{4\delta}
   \|\zeta_r^{\mu}   \|_{H}^2.
  \end{split}     \end{equation}
  This, \eqref{eq:D.Y}, nonnegativity of $\phi,V_0,\bar{V}$, Proposition~\ref{prop:moments:hilbert}
  (applied in the case $s_2<T$ for every $z\in\R^d$ with $p\defeq p(1+\theta)$,
  $T\defeq T-s_2$,
  $\P\defeq \P(\cdot|D_{s_1,s_2}^{x_1,h_1}(v_1)=z)$, 
  $(\mathbb{F}_t)_{t\in[0,T-s_2]}\defeq (\mathbb{F}_{s_2+t})_{t\in[0,T-s_2]}$,
  $W\defeq (W_{s_2+r}-W_{s_2})_{r\in[0,T-s_2]}$, $X\defeq Y$,
  \begin{align}
    \alpha&\defeq \Big(\phi(s_2+r)
    +\tfrac{V_0(X_{s_2,s_2+r}^{x_2+v_2h_2})+V_0(X_{s_2,s_2+r}^{x_2})}{2q_0Te^{\alpha_0(s_2+r)}}
    +\tfrac{\bar{V}(s_2+r,X_{s_2,s_2+r}^{x_2+v_2h_2})+\bar{V}(s_2+r,X_{s_2,s_2+r}^{x_2})}{2q_1e^{\alpha_1(s_2+r)}}\Big)_{r\in[0,T-s_2]},
  \\
  \beta&\defeq 
    \Big(\left((p(1+\theta)-1)(1+\delta^{-1})\|\zeta^{\sigma}_r\|_{\HS(U,H)}^2
  + \tfrac{1}{2\delta}
   \|\zeta_r^{\mu}\|_H^2\right)^{\frac{1}{2}}\Big)_{r\in[0,T-s_2]},
  \end{align}
  $q_1\defeq p$, $q_2\defeq p(1+\theta)/\theta$
  in the notation of
  Proposition~\ref{prop:moments:hilbert}),
  the triangle inequality,
  the fact that $\tfrac{\theta}{p(1+\theta)}\geq \tfrac{1}{q_0}+\tfrac{1}{q_1}$,
  and Lemma~\ref{lem:multiple_exp:guess}
  (applied in the case $t+s_2>0$ with
  $T\defeq t+s_2$,
  $s\defeq s_2$,
  $X^{1}\defeq X_{s_2,\cdot}^{x_2+v_2h_2}$,
  $X^{2}\defeq X_{s_2,\cdot}^{x_2+v_2h_2}$,
  $X^{3}\defeq X_{s_2,\cdot}^{x_2}$,
  $X^{4}\defeq X_{s_2,\cdot}^{x_2}$,
  $q\defeq 2p(1+\theta)^2(1+\delta)/\theta$
  in the notation of
  Lemma~\ref{lem:multiple_exp:guess})
  yield that for all $t\in[0,T-s_2]$
  it holds a.s.\ that
  \begin{equation}  \begin{split}
    &\Big\|
     D_{s_1,s_2+t}^{x_1,h_1}(v_1)-D_{s_2,s_2+t}^{x_2,h_2}(v_2)
     \Big\|_{L^{p}(\P(\cdot|\mathbb{F}_{s_2});H)}
     =
    \Big\|
     Y_t
     \Big\|_{L^{p}(\P(\cdot|D_{s_1,s_2}^{x_1,h_1}(v_1));H)}
     \\&
     \leq
     \left(\|Y_0\|_{L^{p(1+\theta)}(\P(\cdot|\mathbb{F}_{s_2});H)}^2
     +\int_0^t
     \Big\|
     \Big(
     \tfrac{(p(1+\theta)-1)(1+\delta)}{\delta}\|\zeta_s^{\sigma}\|_{\HS(U,H)}^2
     +\tfrac{1}{2\delta}\|\zeta_s^{\mu}\|_{H}^2\Big)^{\frac{1}{2}}\Big\|_{L^{p(1+\theta)}(\P(\cdot|\mathbb{F}_{s_2});\R)}^2
     \,ds
     \right)^{\!\frac{1}{2}}
  \\&
  \cdot
   \Big\|\exp\Big(\smallint_{s_2}^{t+s_2}
   \Big(
  \constFun(r)
  +\tfrac{V_0(X_{s_2,r}^{x_2+v_2h_2})+V_0(X_{s_2,r}^{x_2})}{2q_0Te^{\alpha_0(r)}}
  +\tfrac{\bar{V}(r,X_{s_2,r}^{x_2+v_2h_2})+\bar{V}(r,X_{s_2,r}^{x_2})}{2q_1e^{\alpha_1(r)}}
  \Big)
   \,dr\Big)\Big\|_{L^{p(1+\theta)/\theta}(\P;\R)}
     \\&
\leq
     \Big\|
     D_{s_1,s_2}^{x_1,h_1}(v_1)-v_2
     \Big\|_H
     +\left(\int_0^t
     \Big(\tfrac{(p(1+\theta)-1)(1+\delta)}{\delta}\|\zeta_s^{\sigma}\|_{L^{p(1+\theta)}(\P(\cdot|\mathbb{F}_{s_2});\HS(U,H))}^2
     +\tfrac{1}{2\delta}\|\zeta_s^{\mu}\|_{L^{p(1+\theta)}(\P(\cdot|\mathbb{F}_{s_2});H)}^2
     \Big)\,ds
     \right)^{\!\frac{1}{2}}
  \\&\qquad
  \cdot
   \exp\Big(\smallint_{s_2}^{t+s_2}
   \Big(
  \constFun(r)
        +
        \tfrac{
          \beta_{ 0 }
        }
        { q_0 e^{ \alpha_{ 0 } r } }
        + 
        \tfrac{ \beta_{ 1 } }
        { q_1 e^{ \alpha_{ 1 } r } }
    \Big)
        \,
        dr
        +\sum_{i=0}^1 \tfrac{V_i(x_2+v_2h_2)+V_i(x_2)}{2q_i e^{\alpha_is_2}}
   \Big).
  \end{split}     \end{equation}
  This, the fact that $p\geq 2$, and the triangle inequality imply that
  \begin{equation}  \begin{split}\label{eq:deltaD}
    &\Big\|
     D_{s_1,t_2}^{x_1,h_1}(v_1)-D_{s_2,t_2}^{x_2,h_2}(v_2)
     \Big\|_{L^{p}(\P;H)}
     \\&
    =\Big\|\Big\|
     D_{s_1,s_2+t_2-s_2}^{x_1,h_1}(v_1)-D_{s_2,s_2+t_2-s_2}^{x_2,h_2}(v_2)
     \Big\|_{L^{p}(\P(\cdot|\mathbb{F}_{s_2});H)}
     \Big\|_{L^{p}(\P;\R)}
     \\&
     \leq
     \bigg(
    \Big\|
     D_{s_1,s_2}^{x_1,h_1}(v_1)-v_2
     \Big\|_{L^{p}(\P;H)}
     \\&\qquad\quad
     +
     \sup_{r\in[0,T-s_2]}\max\big\{
    \big\|\zeta^{\mu}_r\big\|_{L^{p(1+\theta)}(\P;H)},
    \big\|\zeta^{\sigma}_r\big\|_{L^{p(1+\theta)}(\P;\HS(U,H))}
    \big\}
    \sqrt{\tfrac{(t_2-s_2) p(1+\theta)(1+\delta)}{\delta}}
    \bigg)
  \\&
  \cdot
   \exp\Big(\smallint_{s_2}^{t_2}
   \Big(
  \constFun(r)
        +
        \tfrac{
          \beta_{ 0 }
        }{
          q_{0} e^{ \alpha_{ 0 } r }
        }
        + 
        \tfrac{ \beta_{ 1 } }{ 	q_{ 1 } e^{ \alpha_{ 1 } r } }
     \Big)
        \,
        dr
        +\sum_{i=0}^1 \tfrac{V_i(x_2+v_2h_2)+V_i(x_2)}{2q_ie^{\alpha_is_2}}
   \Big).
  \end{split}     \end{equation}
  Moreover, \eqref{eq:zeta.mue}, the triangle inequality, and~\eqref{eq:Dmu.localLip}
  yield for all $r\in[0,T-s_2]$ that
  \begin{equation}  \begin{split}
   &\Big\|\zeta_r^{\mu}   \Big\|_{L^{p(1+\theta)}(\P;H)}
   \leq\Big\|
     \Big\|
     \smallint_0^1
     \Big(
     \mu' \Big(  \lambda X_{s_1,s_2+r}^{x_1 + v_1 h_1} +(1-\lambda) X_{s_1,s_2+r}^{x_1} \Big)
     \\&\qquad\qquad\qquad
     -
     \mu' \Big(  \lambda X_{s_2,s_2+r}^{x_2 + v_2 h_2} +(1-\lambda) X_{s_2,s_2+r}^{x_2} \Big)
     \Big)
     \,d\lambda
     \Big\|_{L(H,H)}
     \Big\|D_{s_1,s_2+r}^{x_1,h_1}(v_1)\Big\|_H
   \Big\|_{L^{p(1+\theta)}(\P;\R)}
   \\&
   \leq\Big\|
     c\smallint_0^1
     \Big(
     \lambda\big\|X_{s_1,s_2+r}^{x_1 + v_1 h_1}-X_{s_2,s_2+r}^{x_2 + v_2 h_2}\big\|_H
     +(1-\lambda)\big\|X_{s_1,s_2+r}^{x_1}- X_{s_2,s_2+r}^{x_2}\big\|_H
     \Big)
     \,d\lambda\,
     \big\|D_{s_1,s_2+r}^{x_1,h_1}(v_1)\big\|_H
     \\&\
     \qquad
     \Big(4+V_0(X_{s_1,s_2+r}^{x_1+v_1h_1})
     +V_0(X_{s_1,s_2+r}^{x_1})
     +V_0(X_{s_2,s_2+r}^{x_2+v_2h_2})
     +V_0(X_{s_2,s_2+r}^{x_2})
     \Big)^{\gamma}
   \Big\|_{L^{p(1+\theta)}(\P;H)}.
  \end{split}     \end{equation}
  This, H\"older's inequality
  (applied with
  $\tfrac{1}{p(1+\theta)}=2\tfrac{1}{2p(1+\theta)(1+\delta)}+\tfrac{\delta}{p(1+\theta)(1+\delta)}$),
  and
  the triangle inequality
  show for all $r\in[0,T-s_2]$ that
  \begin{equation}  \begin{split}\label{eq:estimate.zetamue.1}
   &\Big\|\zeta_r^{\mu}   \Big\|_{L^{p(1+\theta)}(\P;H)}
   \leq
     \Big\|\Big\|D_{s_1,s_2+r}^{x_1,h_1}(v_1)\Big\|_H
     \Big\|_{L^{2p(1+\theta)(1+\delta)}(\P;\R)}
     \\&\qquad\cdot
   \Big\|
     \smallint_0^1
     c\big\|\lambda\big(X_{s_1,s_2+r}^{x_1 + v_1 h_1}-X_{s_2,s_2+r}^{x_2 + v_2 h_2}\big)
     +(1-\lambda)\big(X_{s_1,s_2+r}^{x_1}- X_{s_2,s_2+r}^{x_2}\big)\big\|_H
     \,d\lambda
     \Big\|_{L^{2p(1+\theta)(1+\delta)}(\P;\R)}
     \\&
     \qquad\cdot
     \Big\|
     \Big(4
     +V_0(X_{s_1,s_2+r}^{x_1+v_1h_1})
     +V_0(X_{s_1,s_2+r}^{x_1})
     +V_0(X_{s_2,s_2+r}^{x_2+v_2h_2})
     +V_0(X_{s_2,s_2+r}^{x_2})
     \Big)^{\gamma}
   \Big\|_{L^{\frac{p(1+\theta)(1+\delta)}{\delta }}(\P;\R)}
   \\&
   \leq
     \Big\|D_{s_1,s_2+r}^{x_1,h_1}(v_1)
     \Big\|_{L^{2p(1+\theta)(1+\delta)}(\P;H)}
     \cdot
     c\max_{\iota\in\{0,1\}}
   \Big\|
     X_{s_1,s_2+r}^{x_1 + \iota v_1 h_1}-X_{s_2,s_2+r}^{x_2 + \iota v_2 h_2}
    \Big\|_{L^{2p(1+\theta)(1+\delta)}(\P;H)}
  \\&
     \qquad\cdot
     4^{\gamma}\max_{i\in\{1,2\}}\max_{z\in\{x_i,x_i+v_ih_i\}}
     \Big\|1+
     V_0(X_{s_i,s_2+r}^{z})
   \Big\|_{L^{\gamma p(1+\theta)(1+\delta)/\delta}(\P;\R)}^{\gamma}.
  \end{split}     \end{equation}
  Equation \eqref{eq:def:Zvy},
  Lemma~\ref{l:C1.implies.C0}, and
  Lemma~\ref{lem:Hoelder}
  (applied for all $r\in[s_1,T]$
  with 
  $T\defeq t_2$,
  $p\defeq 2p(1+\theta)(1+\delta)$,
  $s_1\defeq s_1$, $s_2\defeq s_1$, $t_1\defeq s_2+r$, $t_2\defeq s_2+r$,
  $x_1\defeq x_1+v_1h_1$, $x_2\defeq x_1$
  in the notation of
  Lemma~\ref{lem:Hoelder})
  yield for all $r\in[0,t_2-s_2]$ that
  \begin{equation}  \begin{split}\label{eq:estimate.D}
     &\Big\|D_{s_1,s_2+r}^{x_1,h_1}(v_1)
     \Big\|_{L^{2p(1+\theta)(1+\delta)}(\P;H)}
     =
     \tfrac{1}{|h_1|}
     \Big\|X_{s_1,s_2+r}^{x_1+v_1h_1}-X_{s_1,s_2+r}^{x_1}
     \Big\|_{L^{2p(1+\theta)(1+\delta)}(\P;H)}
     \\&
     \leq
\|v_1\|_H
  \exp\!\left(
        \int_{s_1}^{t_2}  
        \Big(
        \constFun(t)
        +
        \sum_{i=0}^1
        \tfrac{
          \beta_{ i } 
        }{
           q_i e^{ \alpha_{ i } t }
        }
        \Big)
        \,
        dt
        +
        \sum_{ i = 0 }^1 
        \tfrac{V_i(x_1+v_1h_1)+V_i(x_1)}{2q_i e^{\alpha_i s_1}}
      \right).
  \end{split}     \end{equation}
  Moreover,
  \eqref{eq:estimate.zetamue.1},
  \eqref{eq:estimate.D},
  \eqref{eq:ass.41},
  Lemma~\ref{lem:Hoelder}
  (applied for all $r\in[0,t_2-s_2]$, $\iota\in\{0,1\}$
  with 
  $p\defeq 2p(1+\theta)(1+\delta)$,
  $s_1\defeq s_1$, $s_2\defeq s_2$, $t_1\defeq s_2+r$, $t_2\defeq s_2+r$,
  $x_1\defeq x_1+\iota v_1h_1$,
  $x_2\defeq x_2+\iota v_2h_2$
  in the notation of
  Lemma~\ref{lem:Hoelder}),
  and
  Lemma~\ref{l:exp_mom.moments}
  yield for all $r\in[0,t_2-s_2]$ that
  \begin{equation}  \begin{split}
   &\Big\|\zeta_r^{\mu}   \Big\|_{L^{p(1+\theta)}(\P;H)}
   \\&
  \leq
\|v_1\|_H
  \exp\!\left(
        \int_{s_1}^{t_2}  
        \Big(
        \constFun(t)
        +
        \sum_{i=0}^1
        \tfrac{
          \beta_{ i } 
        }{
           q_i e^{ \alpha_{ i } t }
        }
        \Big)
        \,
        dt
        +
        \sum_{ i = 0 }^1 
        \tfrac{V_i(x_1+v_1h_1)+V_i(x_1)}{2q_i e^{\alpha_i s_1}}
      \right)
\\&
\cdot c\max_{\iota\in\{0,1\}}\Bigg[
  \exp\!\left(
        \int_{s_1}^{t_2}  
        \Big(
        \constFun(t)
        +
        \sum_{i=0}^1
        \tfrac{
          \beta_{ i } 
        }{
           q_i e^{ \alpha_{ i } t }
        }
        \Big)
        \,
        dt
        +
        \sum_{ i = 0 }^1 
        \tfrac{V_i(x_1+\iota v_1h_1)+V_i(x_2+\iota v_2h_2)}{2 q_i e^{\alpha_is_1}}
      \right)
   \\&
\Bigg(
\|x_1+\iota v_1h_1-x_2-\iota v_2h_2\|_H
  +\sqrt{|s_1-s_2|}
 c e^{\alpha_0 \gamma|s_2-s_1|}
\\&\qquad\cdot \Big| 2p(1+\theta)^2(1+\delta)\gamma+V_0(x_2+\iota v_2h_2)
 +\int_{s_1}^{t_2}\tfrac{\beta_0}{e^{\alpha_0 u}}\,du \Big|^{\gamma}
\Big(\sqrt{t_2}+2p(1+\theta)^2(1+\delta)\Big)
      \Bigg)
  \Bigg]
  \\&\cdot
  4^{\gamma}
    e^{\alpha_0 \gamma t_2}\left(\tfrac{\gamma p(1+\theta)(1+\delta)}{\delta}
    +\smallint_{s_1}^{t_2} \tfrac{\beta_0}{e^{\alpha_0 t}}\,dt
    +\max_{z\in\{x_1,x_2,x_1+v_1h_1,x_2+v_2h_2\}}
    V_0(z)
  \right)^{\gamma}
  \end{split}     \end{equation}
  and
  \begin{equation}  \begin{split}\label{eq:estimate.mue}
   &\Big\|\zeta_r^{\mu}   \Big\|_{L^{p(1+\theta)}(\P;H)}
  \leq
  \left(\|x_1-x_2\|_H+\|v_1h_1-v_2h_2\|_H+\sqrt{|s_1-s_2|}\right)
  \\&
  \cdot
\|v_1\|_H\,
  c\max_{\iota\in\{0,1\},j\in\{1,2\}}
  \exp\!\left(
        2\int_{s_1}^{t_2}  
        \Big(
        \constFun(t)
        +
        \sum_{i=0}^1
        \tfrac{
          \beta_{ i } 
        }{
          q_{i} e^{ \alpha_{ i } t }
        }
        \Big)
        \,
        dt
        +
        2\sum_{ i = 0 }^1 
        \tfrac{V_i(x_j+\iota v_jh_j)}{ q_{i} e^{\alpha_i s_1}} 
      \right)
  \\&\cdot
  4^{\gamma}
    e^{2\alpha_0 \gamma t_2}\left(\tfrac{2(\gamma+1) p(1+\theta)^2(1+\delta)}{\min\{\delta,1\}}+\sqrt{t_2}
    +\smallint_{s_1}^{t_2} \tfrac{\beta_0}{e^{\alpha_0 t}}\,dt
    +\max_{\iota\in\{0,1\},j\in\{1,2\}}
    V_0(x_j+\iota v_jh_j)
  \right)^{2\gamma+1}(1+c).
  \end{split}     \end{equation}
  An analogous argumentation shows for all $r\in[0,t_2-s_2]$ that
  \begin{equation}  \begin{split}\label{eq:estimate.sigma}
   &\Big\|\zeta_r^{\sigma}   \Big\|_{L^{p(1+\theta)}(\P;\HS(U,H))}
  \leq
  \left(\|x_1-x_2\|_H+\|v_1h_1-v_2h_2\|_H+\sqrt{|s_1-s_2|}\right)
  \\&
  \cdot
\|v_1\|_H\,
  c\max_{\iota\in\{0,1\},j\in\{1,2\}}
  \exp\!\left(
        2\int_{{s_1}}^{t_2}  
        \Big(
        \constFun(t)
        +
        \sum_{i=0}^1
        \tfrac{
          \beta_{ i } 
        }{
          q_{i} e^{ \alpha_{ i } t }
        }
        \Big)
        \,
        dt
        +
        2\sum_{ i = 0 }^1 
        \tfrac{V_i(x_j+\iota v_jh_j)}{ q_{i} } 
      \right)
  \\&\cdot
  4^{\gamma}
    e^{2\alpha_0 \gamma t_2}\left(\tfrac{2(\gamma+1) p(1+\theta)^2(1+\delta)}{\min\{\delta,1\}}+\sqrt{t_2}
    +\smallint_{s_1}^{t_2} \tfrac{\beta_0}{e^{\alpha_0 t}}\,dt
    +\max_{\iota\in\{0,1\},j\in\{1,2\}}
    V_0(x_j+\iota v_jh_j)
  \right)^{2\gamma+1}(1+c).
 \end{split}     \end{equation}
  Next, we derive a temporal regularity estimate.
  Lemma~\ref{lem:Zv:eq},
  the triangle inequality,
  the Burkholder-Davis-Gundy type inequality in \cite[Lemma 7.7]{dz92}, 
  H\"older's inequality (applied with $\tfrac{1}{p}=\tfrac{1}{2p}+\tfrac{1}{2p}$),
  and~\eqref{eq:growth.Dmue}
  prove for all $u_1\in[s_1,T]$, $u_2\in[u_1,T]$ that
  \begin{equation}  \begin{split}
    &\left\|D_{s_1,u_2}^{x_1,h_1}(v_1)-D_{s_1,u_1}^{x_1,h_1}(v_1)
    \right\|_{L^{p}(\P;H)}
    \\&
    \leq\int_{u_1}^{u_2}\left\|
    \left\|
      \int_0^1 \mu' \Big( \lambda X_{s_1,r}^{x_1 + v_1 h_1} 
      +(1-\lambda)X_{s_1,r}^{x_1} \Big)\,d\lambda\right\|_{L(H,H)}
      \|D_{s_1,r}^{x_1,h_1}(v_1) \|_H
      \right\|_{L^{p}(\P;\R)}
      \,dr
    \\&\quad
    +\left(\tfrac{p(p-1)}{2}\int_{u_1}^{u_2}\left\|
    \left\|
      \int_0^1 \sigma' \Big( \lambda X_{s_1,r}^{x_1 + v_1 h_1} 
      +(1-\lambda)X_{s_1,r}^{x_1} \Big)\,d\lambda\right\|_{L(H,\HS(U,H))}
      \|D_{s_1,r}^{x_1,h_1}(v_1) \|_H
      \right\|_{L^{p}(\P;\R)}^2\,dr
    \right)^{\frac{1}{2}}
    \\&
    \leq\int_{u_1}^{u_2}
    \left\|
    c\left(2+V_0(X_{s_1,r}^{x_1 + v_1 h_1})+V_0(X_{s_1,r}^{x_1})\right)^{\gamma}
    \right\|_{L^{2p}(\P;\R)}
    \left\|
      D_{s_1,r}^{x_1,h_1}(v_1)
    \right\|_{L^{2p}(\P;H)}
      \,dr
    \\&\quad
    +\left(\tfrac{p(p-1)}{2}\int_{u_1}^{u_2}
    \left\|
    c\left(2+V_0(X_{s_1,r}^{x_1 + v_1 h_1})+V_0(X_{s_1,r}^{x_1})\right)^{\gamma}
    \right\|_{L^{2p}(\P;\R)}^2
    \left\|
      D_{s_1,r}^{x_1,h_1}(v_1)
    \right\|_{L^{2p}(\P;H)}^2
      \,dr
    \right)^{\frac{1}{2}}.
  \end{split}     \end{equation}
  This, the triangle inequality,
  Lemma~\ref{l:exp_mom.moments}
  (applied for all $r\in[s_1,T]$, $\iota\in\{0,1\}$ with $T\defeq r$,
  $s\defeq s_1$,
  $\mathbb{F}\defeq (\mathbb{F}_{t})_{t\in[s_1,r]}$,
  $W\defeq (W_t)_{t\in[s_1,r]}$,
  $X\defeq (X_{s_1,t}^{x_1+\iota v_1y})_{t\in[s_1,r]}$,
  $\alpha\defeq \alpha_0$,
  $\beta\defeq \beta_0$,
  $V\defeq V_0$,
  $t\defeq r$,
  $p\defeq 2p\gamma$
  in the notation of 
  Lemma~\ref{l:exp_mom.moments}),
  and
  \eqref{eq:estimate.D}
  imply for all $u_1\in[s_1,T]$, $u_2\in[u_1,T]$ that
  \begin{equation}  \begin{split}\label{eq:temporal.estimate}
    &\left\|D_{s_1,u_2}^{x_1,h_1}(v_1)-D_{s_1,u_1}^{x_1,h_1}(v_1)
    \right\|_{L^{p}(\P;H)}
    \\&\leq
    \left(u_2-u_1+p\sqrt{u_2-u_1}\right)
    c
    \\&\quad
    \cdot\sup_{r\in[u_1,u_2]}
    \bigg[
    \bigg(
    \sum_{\iota\in\{0,1\}}
    \left\|
    1+V_0(X_{s_1,r}^{x_1 + \iota v_1 h_1})
    \right\|_{L^{2p\gamma}(\P;\R)}\bigg)^{\gamma}
    \left\|
      D_{s_1,r}^{x_1,h_1}(v_1)
    \right\|_{L^{2p}(\P;H)}
    \bigg]
    \\&\leq
    \sqrt{u_2-u_1}
    \left(\sqrt{T}+p\right)
    c
    e^{\alpha_0\gamma u_2}
    \bigg(
    4p\gamma+\int_{0}^{u_2}\tfrac{2\beta_0}{e^{\alpha_0 r}}\,dr
    +
    \sum_{\iota\in\{0,1\}}
    V_0(x_1+\iota v_1h_1)
    \bigg)^{\gamma}
   \\&\quad
   \cdot
\|v_1\|_H
  \exp\!\left(
        \int_{0}^{T}  
        \Big(
        \constFun(r)
        +
        \sum_{i=0}^1
        \tfrac{
          \beta_{ i } 
        }{
          q_{i} e^{ \alpha_{ i } r }
        }
        \Big)
        \,
        dr
        +
        \sum_{ i = 0 }^1 
        \tfrac{V_i(x_1+v_1h_1)+V_i(x_1)}{2 q_{i}e^{\alpha_i s_1}  } 
      \right)
  \end{split}     \end{equation}
  The triangle inequality and~\eqref{eq:deltaD} yield that
  \begin{equation}  \begin{split}
    &\Big\|
     D_{s_1,t_1}^{x_1,h_1}(v_1)-D_{s_2,t_2}^{x_2,h_2}(v_2)
     \Big\|_{L^{p}(\P;H)}
     \\&
     \leq
   \Big\|
     D_{s_1,t_1}^{x_1,h_1}(v_1)-D_{s_1,t_2}^{x_1,h_1}(v_1)
     \Big\|_{L^{p}(\P;H)}
    +
   \Big\|
     D_{s_1,t_2}^{x_1,h_1}(v_1)-D_{s_2,t_2}^{x_2,h_2}(v_2)
     \Big\|_{L^{p}(\P;H)}
     \\&
     \leq
   \Big\|
     D_{s_1,t_1}^{x_1,h_1}(v_1)-D_{s_1,t_2}^{x_1,h_1}(v_1)
     \Big\|_{L^{p}(\P;H)}
   \\&+
     \bigg(
    \Big\|
     D_{s_1,s_2}^{x_1,h_1}(v_1)-v_1+v_1-v_2
     \Big\|_{L^{p}(\P;H)}
     \\&\qquad\quad
     +
     \sup_{r\in[0,t_2-s_2]}\max\big\{
    \big\|\zeta^{\mu}_r\big\|_{L^{p(1+\theta)}(\P;H)},
    \big\|\zeta^{\sigma}_r\big\|_{L^{p(1+\theta)}(\P;\HS(U,H))}
    \big\}
    \sqrt{\tfrac{(t_2-s_2)p(1+\theta)(1+\delta)}{\delta}}
    \bigg)
  \\&\quad
  \cdot
   \exp\Big(\smallint_{s_2}^{t_2}
   \Big(
  \constFun(r)
        +
        \tfrac{
          \beta_{ 0 }
        }{
          q_{0} e^{ \alpha_{ 0 } r }
        }
        + 
        \tfrac{ \beta_{ 1 } }{ 	q_{ 1 } e^{ \alpha_{ 1 } r } }
   \Big)
        \,
        dr
        +\sum_{i=0}^1 \tfrac{V_i(x_2+v_2h_2)+V_i(x_2)}{2q_ie^{\alpha_is_2}}
   \Big).
  \end{split}     \end{equation}
  This,~\eqref{eq:temporal.estimate},~\eqref{eq:estimate.mue},
  and
  \eqref{eq:estimate.sigma}
  ensure that
  \begin{equation}  \begin{split}
    &\Big\|
     D_{s_1,t_1}^{x_1,h_1}(v_1)-D_{s_2,t_2}^{x_2,h_2}(v_2)
     \Big\|_{L^{p}(\P;H)}
  \\&
     \leq
    \sqrt{|t_2-t_1|}
    \left(\sqrt{T}+p\right)
    c
    e^{\alpha_0\gamma T}
    \bigg(
    4p\gamma+\int_{0}^{T}\tfrac{2\beta_0}{e^{\alpha_0 r}}\,dr
    +
    \sum_{\iota\in\{0,1\}}
    V_0(x_1+\iota v_1h_1)
    \bigg)^{\gamma}
   \\&\quad
   \cdot
\|v_1\|_H
  \exp\!\left(
        \int_{0}^{T}  
        \Big(
        \constFun(r)
        +
        \sum_{i=0}^1
        \tfrac{
          \beta_{ i } 
        }{
          q_{i} e^{ \alpha_{ i } r }
        }
        \Big)
        \,
        dr
        +
        \sum_{ i = 0 }^1 
        \tfrac{V_i(x_1+v_1h_1)+V_i(x_1)}{2q_{i} e^{\alpha_i s_1} } 
      \right)
\\&+
     \Bigg[
     \|v_1-v_2\|_H
     +
    \sqrt{|s_2-s_1|}
    \left(\sqrt{T}+p\right)
    c
    e^{\alpha_0\gamma s_2}
    \Big(
    4p\gamma+\int_{0}^{s_2}\tfrac{2\beta_0}{e^{\alpha_0 r}}\,dr
    +
    \sum_{\iota\in\{0,1\}}
    V_0(x_1+\iota v_1h_1)
    \Big)^{\gamma}
 \\&\qquad\quad
   \cdot
\|v_1\|_H
  \exp\!\left(
        \int_{0}^{s_2}  
        \Big(
        \constFun(r)
        +
        \sum_{i=0}^1
        \tfrac{
          \beta_{ i } 
        }{
          q_{i} e^{ \alpha_{ i } r }
        }
        \Big)
        \,
        dr
        +
        \sum_{ i = 0 }^1 
        \tfrac{V_i(x_1+v_1h_1)+V_i(x_1)}{2 q_{i} e^{\alpha_i s_1} } 
      \right)
   \\&\qquad
     +
  \left(\|x_1-x_2\|_H+\|v_1h_1-v_2h_2\|_H+\sqrt{|s_1-s_2|}\right)
  \\&\quad\qquad
  \cdot
\|v_1\|_H\,
  c\max_{\iota\in\{0,1\},j\in\{1,2\}}
  \exp\!\left(
        2\int_{0}^{T}  
        \Big(
        \constFun(r)
        +
        \sum_{i=0}^1
        \tfrac{
          \beta_{ i } 
        }{
          q_{i} e^{ \alpha_{ i } r }
        }
        \Big)
        \,
        dr
        +
        2\sum_{ i = 0 }^1 
        \tfrac{V_i(x_j+\iota v_jh_j)}{ q_{i} e^{\alpha_i s_1} } 
      \right)
  \\&\quad\qquad\cdot
  4^{\gamma}
    e^{2\alpha_0 \gamma T}\left(\tfrac{2(1+\gamma)p(1+\theta)^2(1+\delta)}
    {\min\{\delta,1\}}
    +\sqrt{T}
    +\smallint_0^T \tfrac{\beta_0}{e^{\alpha_0 r}}\,dr
    +\max_{\iota\in\{0,1\},j\in\{1,2\}}
    V_0(x_j+\iota v_jh_j)
  \right)^{2\gamma+1}
  \\&\quad\qquad\cdot
  (1+c)
  \sqrt{(t_2-s_2) p(1+\theta)(1+\delta^{-1})}
    \Bigg]
  \\&\quad
  \cdot
   \exp\Big(\smallint_{s_2}^{t_2}
   \Big(
  \constFun(r)
        +
        \tfrac{
          \beta_{ 0 }
        }{
          q_{0} e^{ \alpha_{ 0 } r }
        }
        + 
        \tfrac{ \beta_{ 1 } }{ 	q_{ 1 } e^{ \alpha_{ 1 } r } }
   \Big)
        \,
        dr
        +\sum_{i=0}^1 \tfrac{V_i(x_2+v_2h_2)+V_i(x_2)}{2q_ie^{\alpha_is_2}}
   \Big).
  \end{split}     \end{equation}
  Consequently we obtain that
  \begin{equation}  \begin{split}
    &\Big\|
     D_{s_1,t_1}^{x_1,h_1}(v_1)-D_{s_2,t_2}^{x_2,h_2}(v_2)
     \Big\|_{L^{p}(\P;H)}
   \\&
   \leq
 \bigg[
  \Big(\Big(\|x_1-x_2\|_H+\|v_1h_1-v_2h_2\|_H\Big)\sqrt{t_2-s_2}
  +\sqrt{|s_1-s_2|}+
  \sqrt{|t_1-t_2|}\Big)
  \|v_1\|_H
  +\|v_1-v_2\|_H
  \bigg]
  \\&\quad
  \cdot
    e^{2\alpha_0 \gamma T}
    \left(\tfrac{4p(1+\theta)^2(1+\delta)(1+\gamma)}{\min\{\delta,1\}}
    +2\sqrt{T}
    +\smallint_0^T \tfrac{2\beta_0}{e^{\alpha_0 r}}\,dr
    +2\max_{\iota\in\{0,1\},j\in\{1,2\}}
    V_0(x_j+\iota v_jh_j)
  \right)^{2\gamma+2}
  \\&\quad
  \cdot
  (1+c)^2\max_{\iota\in\{0,1\},j\in\{1,2\}}
  \exp\!\left(
        3\int_{0}^{T}  
        \Big(
        \constFun(r)
        +
        \sum_{i=0}^1
        \tfrac{
          \beta_{ i } 
        }{
          q_{i} e^{ \alpha_{ i } r }
        }
        \Big)
        \,
        dr
        +
        3\sum_{ i = 0 }^1 
        \tfrac{V_i(x_j+\iota v_jh_j)}{ q_{i}e^{\alpha_i s_1} } 
      \right).
  \end{split}     \end{equation}
 This completes the proof of Lemma~\ref{l:local.Lip.C1}.
 \end{proof}

\subsection{Moment estimates for the second-order derivative process}
The following lemma, Lemma~\ref{lem:moments:ZZv}, provides
a moment estimate for second-order spatial derivatives of solutions of SDEs.
\begin{lemma}[Moment estimates for the second-order derivative process] \label{lem:moments:ZZv}
Assume Setting~\ref{sett:exists:C1},
let $D\subseteq\mathcal{O}$ be an open set,
let $s\in[0,T]$, $t\in[s,T]$,
let $Y\colon\Omega\to D$, $Z_1,Z_2\colon\Omega\to H$ be $\mathbb{F}_s$/$\mathcal{B}(H)$-measurable,
assume that $\sigma(\{X_{s,t}^x\colon x\in D\})$ and $\mathbb{F}_s$ are independent,
and
assume for all $\omega\in\Omega$ that $(D\ni x\mapsto X_{s,t}^x(\omega)\in H)\in C^2(D,H)$.
Then
\begin{align} \label{lem:moments:ZZ:eq2}
\begin{split}
& \left\| \tfrac{\partial^2}{\partial x^2}X_{s,t}^Y (Z_1,Z_2)\right\|_{L^{p}(\P;H)} 
	 \leq \sqrt{t-s} 
\Bigg\|
  \exp\!\left(
        3\int_{0}^{t}  
        \Big(
        \constFun(r)
        +
        \sum_{i=0}^1
        \tfrac{
          \beta_{ i } 
        }{
          q_{i} e^{ \alpha_{ i } r }
        }
        \,
        \Big)
        dr
        +
        3\sum_{ i = 0 }^1 
        \tfrac{V_i(Y)}{ q_{i} e^{\alpha_i s}} 
      \right)
  \\&\quad
  \cdot
\|Z_1\|_H\|Z_2\|_H
    e^{2\alpha_0 \gamma t}
    \left(\tfrac{4p(1+\theta)^2(1+\delta)(1+\gamma)}{\min\{\delta,1\}}
    +2\sqrt{t}
    +\smallint_0^t \tfrac{2\beta_0}{e^{\alpha_0 r}}\,dr
    +
    2
    V_0(Y)
  \right)^{2\gamma+2}
  (1+c)^2
 \Bigg\|_{L^{p}(\P;\R)}.
\end{split}
\end{align} 
\end{lemma}
\begin{proof}[Proof of Lemma~\ref{lem:moments:ZZv}]
 Lemma~\ref{l:local.Lip.C1}
 (applied in the case $t>0$
  for every $y\in D$, $h,\eps\in(0,\infty)$, $z_1,z_2\in H$ satisfying that $y+hz_2\in D$,
  $y+hz_2+\eps z_1, y+\eps z_1\in \mathcal{O}$
 with $T\defeq t$,
 $s_1\defeq s$, $s_2\defeq s$, $t_1\defeq t$, $t_2\defeq t$, $x_1\defeq y+hz_2$, $x_2\defeq y$, $v_1\defeq z_1$, $v_2\defeq z_1$, $h_1\defeq \eps$, $h_2\defeq \eps$
 in the notation of
 Lemma~\ref{l:local.Lip.C1})
 yields for all $y\in D$, $z_1,z_2\in H$, $h,\eps\in(0,\infty)$ with $y+hz_2\in D$  that
\begin{align}
\begin{split}
   &\E\Big[\Big\|
  \tfrac{D_{s,t}^{y+hz_2,\eps}(z_1)-D_{s,t}^{y,\eps}(z_1)}{h}
   \Big\|_H^{p}\Big]
  \\&
\leq
 \Bigg(
  \|z_2\|_H
    e^{2\alpha_0 \gamma t}
    \left(\tfrac{4p(1+\theta)^2(1+\delta)(1+\gamma)}{\min\{\delta,1\}}
    +2\sqrt{t}
    +\smallint_0^t \tfrac{2\beta_0}{e^{\alpha_0 r}}\,dr
    +
    2\max_{i,j\in\{0,1\}}
    V_0(y+ihz_2+j\eps z_1)
  \right)^{2\gamma+2}
  \\&\quad
  \cdot\sqrt{t-s}
\|z_1\|_H\,
  (1+c)^2
  \exp\!\left(
        3\int_{0}^{t}  
        \Big(
        \constFun(r)
        +
        \sum_{i=0}^1
        \tfrac{
          \beta_{ i } 
        }{
          q_{i} e^{ \alpha_{ i } r }
        }
        \Big)
        \,
        dr
        +
        3
        \sum_{ i = 0 }^1 
        \tfrac{
    \max_{l,j\in\{0,1\}}
        V_i(y+lhz_2+j\eps z_1)}{ q_{i}e^{\alpha_i s} } 
      \right)
    \Bigg)^{p}.
\end{split}
\end{align}
 This, independence of $\sigma(\{X_{s,t}^x\colon x\in D\})$ and $\mathbb{F}_s$,
 a disintegration formula (e.g.~\cite[Lemma 2.3]{HJK+18}),
 the fact that
 for all $\omega\in\Omega$ it holds that $(D\ni x\mapsto X_{s,t}^x(\omega)\in H)\in C^2(D,H)$,
 and
 Fatou's lemma (e.g.\ Lemma 3.10 in~\cite{HutzenthalerJentzen2015Memoires})
 yield that
\begin{align}
\begin{split}
  & \left\| \tfrac{\partial^2}{\partial x^2}X_{s,t}^Y (Z_1,Z_2)\right\|_{L^{p}(\P;H)}^{p}
  =\int_{D\times H\times H} \E\Big[\Big\|\tfrac{\partial^2}{\partial x^2}X_{s,t}^y (z_1,z_2)\Big\|_H^{p}\Big]\P\Big((Y,Z_1,Z_2)\in d(y,z_1,z_2)\Big)
  \\&
  =\int_{D\times H\times H} \E\Big[\Big\|
  \liminf_{\substack{\{r\in (0,\infty)\colon y+rz_2\in D\}\ni h\to 0\\
   (0,\infty)\ni \eps \to 0}
  }
  \tfrac{D_{s,t}^{y+hz_2,\eps}(z_1)-D_{s,t}^{y,\eps}(z_1)}{h}
  \Big\|_H^{p}\Big]\P\Big((Y,Z_1,Z_2)\in d(y,z_1,z_2)\Big)
  \\&
  \leq\int_{D\times H\times H}
  \liminf_{\substack{\{r\in (0,\infty)\colon y+rz_2\in D\}\ni h\to0\\
   (0,\infty)\ni \eps \to 0}
  }
   \E\Big[\Big\|
  \tfrac{D_{s,t}^{y+hz_2,\eps}(z_1)-D_{s,t}^{y,\eps}(z_1)}{h}
   \Big\|_H^{p}\Big]\P\Big((Y,Z_1,Z_2)\in d(y,z_1,z_2)\Big)
  \\&
\leq\int_{D\times H\times H}
    \Bigg(
  \|z_2\|_H
    e^{2\alpha_0 \gamma t}
    \left(\tfrac{4p(1+\theta)^2(1+\delta)(1+\gamma)}{\min\{\delta,1\}}
    +2\sqrt{t}
    +\smallint_0^t \tfrac{2\beta_0}{e^{\alpha_0 r}}\,dr
    +
    2
    V_0(y)
  \right)^{2\gamma+2}
\|z_1\|_H\,
  (1+c)^2
  \\&\quad
  \cdot
  \sqrt{t-s}\exp\!\left(
        3\int_{0}^{t}  
        \Big(
        \constFun(r)
        +
        \sum_{i=0}^1
        \tfrac{
          \beta_{ i } 
        }{
          q_{i} e^{ \alpha_{ i } r }
        }
        \Big)
        \,
        dr
        +
        3\sum_{ i = 0 }^1 
        \tfrac{V_i(y)}{ q_{i} e^{\alpha_i s}} 
      \right)
    \Bigg)^{p}
\P\Big((Y,Z_1,Z_2)\in d(y,z_1,z_2)\Big)
\\&=
\Bigg\|\sqrt{t-s}\|Z_1\|_H\|Z_2\|_H
    e^{2\alpha_0 \gamma t}
    \left(\tfrac{4p(1+\theta)^2(1+\delta)(1+\gamma)}{\min\{\delta,1\}}
    +2\sqrt{t}
    +\smallint_0^t \tfrac{2\beta_0}{e^{\alpha_0 r}}\,dr
    +
    2
    V_0(Y)
  \right)^{2\gamma+2}
  (1+c)^2
  \\&\quad
  \cdot
  \exp\!\left(
        3\int_{0}^{t}  
        \Big(
        \constFun(r)
        +
        \sum_{i=0}^1
        \tfrac{
          \beta_{ i } 
        }{
          q_{i} e^{ \alpha_{ i } r }
        }
        \Big)
        \,
        dr
        +
        3\sum_{ i = 0 }^1 
        \tfrac{V_i(Y)}{ q_{i}e^{\alpha_i s} } 
      \right)
 \Bigg\|_{L^{p}(\P;\R)}^{p}.
\end{split}
\end{align}
This implies \eqref{lem:moments:ZZ:eq2}
and finishes
the proof of Lemma~\ref{lem:moments:ZZv}.
\end{proof}

\subsection{Existence of a $C^1$-solution}
The following theorem establishes existence of continuously
differentiable
solutions of SDEs.

\begin{theorem}[Existence of a $C^1$-solution] \label{thm:C1}
Assume Setting~\ref{sett:exists:C1},
assume that $\dim(H)<\infty$,
assume that $O\subseteq \overline{\mathcal{O}}$,
and
assume that $p\in (2\dim(H)+6,\infty)$.
Then
there exists a measurable
function
$
  \mathcal{X} \colon \Delta_T \times {O} \times \Omega \to \overline{O}
$
such that
\begin{enumerate}[(i)]
  \item  
for all 
$
  x \in \mathcal{O}, 
$ 
$ 
	s\in [0,T]
$ 
it holds a.s.~that 
$
  (\mathcal{X}^x_{s, t })_{t \in [s,T]} 
  = 
  (X^x_{s,t})_{t \in [s,T]}
$, 
and
\item
for every $ \omega \in \Omega $
it holds
that
$
  \mathcal{X}(\omega) \in C^{0,1}( \Delta_T \times {O}, \overline{O})
$.
\end{enumerate}
\end{theorem}
\begin{proof}[Proof of Theorem~\ref{thm:C1}]
Without loss of generality we additionally assume throughout this proof
that $\dim(H)\geq 1$ and that $O\neq\emptyset$.
Throughout this proof
let $o\in O$, $d\in\N$, $\mathbb{H}\subseteq H$ satisfy that $d=\dim(H)$
and that $\mathbb{H}$ is an orthonormal basis of $H$,
let $\mathcal{O}^{\R}$, $O^{\R}$
be the sets which satisfy that
$O^{\R}=\cap_{v\in\mathbb{H}}
\{(x,h)\in
O\times\R\colon x+vh\in O\}$
and
$\mathcal{O}^{\R}=
\cap_{v\in\mathbb{H}}\{(x,h)\in
\mathcal{O}\times(\R\setminus\{0\})\colon x+vh\in \mathcal{O}\}$
and
let $K_n\subseteq \Delta_T\times H\times \R$,
$n\in \N$,
be the sets which satisfy for all $n\in \N$ that
$K_n=\{(s,t,x,h)\in \Delta_T\times \mathcal{O}^{\R} \colon s^2+t^2+\|x\|_H^2+h^2 \leq n^2 \}$.
 Lemma~\ref{l:C1.implies.C0},
 the fact that $p>2d+6\geq 2d+4$,
 and
 Theorem~\ref{thm:exists:C0}
 (applied with $p\defeq 2p(1+\theta)(1+\delta)$ in the notation of 
 Theorem~\ref{thm:exists:C0})
 yield that there exists a
  measurable
  function $\tilde{\mathcal{X}}\colon\Delta_T\times\overline{\mathcal{O}}\times\Omega\to\overline{\mathcal{O}}$
  such that
for all 
$
  x \in \mathcal{O}, 
$ 
$ 
	s\in [0,T]
$ 
it holds a.s.~that 
$
  (\tilde{\mathcal{X}}^x_{s, t })_{t \in [s,T]} 
  = 
  (X^x_{s,t})_{t \in [s,T]}
$, 
and such that
for every $ \omega \in \Omega $
it holds
that
$
  \tilde{\mathcal{X}}(\omega) \in C^0( \Delta_T \times \overline{\mathcal{O}}, \overline{\mathcal{O}})
$.
Next,
Lemma~\ref{l:local.Lip.C1}, the fact that $\int_0^T \constFun(r) \, dr <\infty$,
and
boundedness of the functions $V_0,V_1$ on each of the bounded 
subsets $\{x\in \mathcal{O}\colon \exists s,t,h\in\R \text{ s.t.\ }(s,t,x,h)\in K_n\}\subseteq\mathcal{O}$,
$n\in\N$,
demonstrate for all $n\in\N$, $v\in\mathbb{H}$ that
\begin{equation}\label{eq:C0.ass.Hoelder2} 
  \sup\bigg(\bigg\{
  \tfrac{ 
  \big(\E\big[  \big\|D_{s_1,t_1}^{x_1,h_1}(v) - D_{s_2,t_2}^{x_2,h_2}(v)   \big\|^{p}_{H}\big]\big)^{\frac{1}{p}}
  }
  {\left( |s_1-s_2|^2+|t_1-t_2|^2+\|x_1-x_2\|_H^{2}+|h_1-h_2|^2
  \right)^{\frac{1}{4}}}
  \colon
  \substack{ (s_1,t_1,x_1,h_1), (s_2,t_2,x_2,h_2) \in K_n \colon
  \\(s_1,t_1,x_1,h_1)\neq (s_2,t_2,x_2,h_2)}
  \bigg\}\cup\{0\}\bigg)
  <\infty.
\end{equation}
In particular this implies for all $n\in\N$, $v\in\mathbb{H}$ that
\begin{equation}  \begin{split}
  &\sup
  \Big(\Big\{
  \Big(\E\Big[  \|D_{s,t}^{x,h}(v)   \|^{p}_{H}\Big]\Big)^{\frac{1}{p}}
  \colon
  (s,t,x,h) \in  K_n\Big\}\cup\{0\}\Big)
  \\&
  \leq
  \sup
  \bigg(\bigg\{
  \tfrac{ 
  \big(\E\big[  \|D_{s,t}^{x,h}(v) - D_{s,s}^{x,h}(v)   \|^{p}_{H}\big]
  \big)^{\frac{1}{p}}
  }
  {\left( |s-s|^2+|t-s|^2+\|x-x\|_H^{2}+|h-h|^2  \right)^{\frac{1}{4}}}
  \sqrt{T}+\|v\|_H
  \colon
  (s,t,x,h) \in  K_n,t\neq s\bigg\}\cup\{0\}\bigg)
  <\infty.
\end{split}     \end{equation}
This, \eqref{eq:C0.ass.Hoelder2},
Proposition~\ref{p:KolChen}
(applied for every $v\in\mathbb{H}$ with 
$H \defeq  \R\times\R\times H\times \R$,
$D\defeq \Delta_T\times \mathcal{O}^{\R}$,
$E\defeq H$,
$F\defeq H$,
$p\defeq p$,
$\alpha\defeq \nicefrac12$,
$X\defeq \left( \Delta_T\times \mathcal{O}^{\R} \ni (s,t,x,h)
\mapsto D_{s,t}^{x,h}(v) \in H \right)$
in the notation of Proposition~\ref{p:KolChen}),
and path continuity of $D_{s,\cdot}^{x,h}(v)$, 
$(s,x,h,v)\in[0,T]\times \mathcal{O}^{\R}\times\mathbb{H}$,
establish for every $v\in\mathbb{H}$ the existence of a
measurable function
$
  \mathcal{D}^v \colon \overline{\Delta_T\times \mathcal{O}^{\R}}
  \times \Omega \to H
$
which satisfies that for all $\omega \in \Omega$ it holds that 
$\mathcal{D}^v(\omega) \in C(\overline{\Delta_T\times \mathcal{O}^{\R}} , H)$
and which satisfies
that
for all $(s,t,x,h)\in\Delta_T\times \mathcal{O}^{\R}$
it holds a.s.~that
$(\mathcal{D}_{s,t}^v(x,h))_{t\in[s,T]}=(D_{s,t}^{x,h}(v))_{t\in[s,T]}$.
Note that $O\subseteq\overline{\mathcal{O}}$ and convexity of $\mathcal{O}$ imply that
$O^{\R}\subseteq \overline{\mathcal{O}^{\R}}$.
Let $\mathcal{D}\colon \Delta_T\times
O^{\R}\times H\times\Omega
\to H$ be the function which satisfies for all
$(s,t,x,h)\in\Delta_T\times O^{\R}$, $v\in H$
that
$\mathcal{D}_{s,t}(x,h,v)=\sum_{e\in\mathbb{H}}\langle v,e\rangle_H
\mathcal{D}_{s,t}^e(x,h)$.
Next, we observe that for all $(s,x,h,v)\in[0,T]\times \mathcal{O}^{\R}\times \mathbb{H}$
 it holds a.s.\ for all $t\in[s,T]$ that
\begin{equation}  \begin{split}
  \mathcal{D}_{s,t}(x,h,v)=
\mathcal{D}_{s,t}^v(x,h)=
D_{s,t}^{x,h}(v)=
\tfrac{X_{s,t}^{x+hv}-X_{s,t}^x}{h}
=
\tfrac{\tilde{\mathcal{X}}_{s,t}^{x+hv}-\tilde{\mathcal{X}}_{s,t}^x}{h}.
\end{split}     \end{equation}
This, continuity of the random fields $\tilde{\mathcal{X}}$, $\mathcal{D}$,
and Lemma~\ref{lem:gradient} 
(applied with $U\defeq H$,
$T\defeq \Delta_T$,
$\mathbb{T}\defeq \Delta_T\cap\Q^2$,
$\mathcal{X}\defeq (\Delta_T\times O\times\Omega\ni(s,t,x,\omega)\mapsto \tilde{\mathcal{X}}_{s,t}^x(\omega)\in H)$,
$\mathcal{Z}\defeq \mathcal{D}$
in the notation of Lemma~\ref{lem:gradient})
prove that there exists $\Omega_0\in\mathcal{F}$
such that $\P(\Omega_0)=1$ and such that for all $\omega\in\Omega_0$,
$(s,t)\in\Delta_T$ it holds that
the mapping $O\ni x\mapsto \tilde{\mathcal{X}}_{s,t}^{x}(\omega)\in H$ is continuously differentiable and it holds for all $x\in O$, $v\in H$
that
\begin{equation}  \begin{split}
\tfrac{\partial}{\partial x}\mathcal{\tilde{X}}_{s,t}^x(\omega)v
=\sum_{e\in\mathbb{H}}\langle v,e\rangle_H \mathcal{D}_{s,t}(x,0,e)
=\mathcal{D}_{s,t}(x,0,v).
\end{split}     \end{equation}
This and continuity of $\mathcal{D}$ prove that for all $\omega\in\Omega_0$
it holds that $\tilde{\mathcal{X}}(\omega)|_{\Delta_T\times O}\in C^{0,1}(\Delta_T\times O,\overline{O})$.
Let $\mathcal{X}\colon\Delta_T\times{O}\times\Omega
\to \overline{O}$ be the function which satisfies for all $(s,t,x,\omega)\in\Delta_T\times{O}\times\Omega$ that
$\mathcal{X}_{s,t}^x(\omega)=\mathbbm{1}_{\Omega_0}(\omega)\tilde{\mathcal{X}}_{s,t}^x(\omega)+o\1_{\Omega\setminus\Omega_0}(\omega)$.
Then it holds
that $\mathcal{X}$ is
measurable,
that
for all 
$
  x \in \mathcal{O} 
$,
$ 
	s\in [0,T]
$ 
it holds a.s.~that 
$
  (\mathcal{X}^x_{s, t })_{t \in [s,T]} 
  = 
  (\tilde{\mathcal{X}}^x_{s, t })_{t \in [s,T]} 
  =
  (X^x_{s,t})_{t \in [s,T]}
$, 
and
that
for every $ \omega \in \Omega $
it holds
that
$
  \mathcal{X}(\omega) \in C^{0,1}( \Delta_T \times {O}, {\overline{O}})
$.
 This proves item (i) and item (ii)
 and finishes the proof of Theorem~\ref{thm:C1}.
\end{proof}


\section{Existence of a $C^2$-solution}\label{sec:5}
In this section we prove 
a strong local H\"older estimate in Lemma \ref{2l:local.Lip.C2} below
and
establish existence of a twice continuously
differentiable solution under suitable assumptions
in Theorem \ref{thm:C2} below.
First, we introduce the setting for these results.
\begin{sett} \label{2sett:exists:C2}
Let $( H, \left< \cdot , \cdot \right>_H, \left\| \cdot \right\|_H )$ and $( U, \left< \cdot , \cdot \right>_U, \left\| \cdot \right\|_U )$
be separable $\R$-Hilbert spaces,
let
$ T \in (0,\infty) $,
let $(\Omega, \F, \P, (\mathbb{F}_{t})_{t\in [0,T]})$ be a filtered probability space
satisfying the usual conditions,
let 
$
  (W_t)_{t\in[0,T]}
$
  be an $\textup{Id}_U$-cylindrical $(\mathbb{F}_t)_{t\in[0,T]}$-Wiener process,
let $\Delta_T =\{(s,t)\in [0,T]^2 \colon s \leq t\}$,
let $O \subseteq H$ be an open set,
let $\mathcal{O}\subset O$ be a convex set,
let $ \mu \in C^2(O,H)$, 
$ \sigma \in C^2(O, \HS(U,H))$,
let $X\colon \Omega\to C^{0,1}(\Delta_T\times O,\mathcal{O})$ be a function
which satisfies for all $(s,t)\in\Delta_T$, $x\in \mathcal{O}$ that
$ X^x_{s,\cdot} \colon [s,T] \times \Omega \to \mathcal{O} $
is an $(\mathbb{F}_r)_{r\in[s,T]}$-adapted stochastic process
and
that a.s.~it holds that
\begin{align} \label{2eq:def:X}
  X^x_{s,t} = 
  x
  + \int_s^{ t } \mu(X^x_{s,r} ) \, dr
  +
  \int_s^{ t } \sigma(X^x_{s,r} ) \, dW_r,
\end{align}
let $ \alpha_0,\alpha_1,\beta_0,\beta_1 ,c\in [0,\infty)$,
$ 
  V_0 
,
  V_1  \in C^{ 2 }( O , [0,\infty) ) 
$,
let
$ 
  \bar{V} \colon [0,T] \times \mathcal{O} \to [0,\infty)
$
be a measurable function which satisfies
for all
$ i \in \{ 0, 1 \} $,
$t\in[0,T]$,
$x\in O$
that 
\begin{equation}
\begin{split}
  &\Big\langle
  \mu( x )
  ,
  (\nabla V_i)(x)
  \Big\rangle_H
  +
  \tfrac{ 1 }{ 2 }
  \operatorname{trace}\!\Big(
    \sigma(x) [\sigma(x)]^* 
    ( \operatorname{Hess} V_i )( x )
  \Big)
  \\&
  +
  \tfrac{ 
    1
  }{ 
    2 
    e^{ 
      \alpha_{ i } {t} 
    }
  }
    \|
      \sigma( x )^* ( \nabla V_{ i } )( x )
    \|_U^2
  +
  \mathbbm{1}_{
    \{ 1 \}
  }(i)
  \cdot
  \bar{V}(t,x)
\leq
  \alpha_{ i } V_{ i }(x)
  +
  \beta_{ i },
\end{split}
\end{equation}
let 
$
  \constFun \colon [0,T] \to [0,\infty)
$
be a measurable and integrable function,
let $p\in[2,\infty)$, $\theta\in[0,\infty)$, $\delta\in(0,\infty)$, $q_0,q_1\in(0,\infty]$
satisfy that $\tfrac{\theta}{6p(1+\theta)^3(1+\delta)^2}=\tfrac{1}{q_0}+\tfrac{1}{q_1}$,
assume for all $t\in[0,T]$, $x,y\in \mathcal{O}$, $v\in H\setminus\{0\}$
that
\begin{equation}  \begin{split}\label{2eq:Lip.ass.strong}
  &\Big\langle v,\smallint_0^1\mu'(\lambda x+(1-\lambda)y)+\delta\,d\lambda \,\,v
  \Big\rangle_H
  +\tfrac{1+\delta}{2}
  \Big\|\smallint_0^1\sigma'(\lambda x+(1-\lambda)y)\,d\lambda \,\,v
  \Big\|_{\HS(U,H)}^2
  \\&
  +
  \tfrac{(3p(1+\theta)^3(1+\delta)^2-1)\big\|
  \big\langle v,\smallint_0^1\sigma'(\lambda x+(1-\lambda)y)\,d\lambda\,\, v
  \big\rangle_H\big\|_{\HS(U,\R)}^2}{\|v\|_H^2}
  \leq \|v\|^2_H\cdot\Big(\phi(t)
  +\tfrac{V_0(x)+V_0(y)}{2q_0T e^{\alpha_0t}}
  +\tfrac{\bar{V}(t,x)+\bar{V}(t,y)}{2q_1e^{\alpha_1t}}
  \Big),
\end{split}     \end{equation}
let $\gamma\in[\tfrac{1}{p},\infty)$ satisfy for all $x\in \mathcal{O}$ that
\begin{align}
\begin{split}
 \max\left\{ \|\mu(x)\|_H , \|\sigma(x)\|_{\HS(U,H)} \right\}
 \leq  c(1+V_0(x))^{\gamma},
\end{split}
\end{align}
assume for all $x,y\in \mathcal{O}$, $i\in\{1,2\}$ that
\begin{align} \label{2eq:growth.Dmue}
\begin{split}
 &\max\left\{
    \left\|\smallint_0^1D^i\mu\big(\lambda x+(1-\lambda)y\big)\,d\lambda 
    \right\|_{L^{(i)}(H,H)} ,
    \left\|\smallint_0^1D^i\sigma\big(\lambda x+(1-\lambda)y\big)\,d\lambda 
    \right\|_{L^{(i)}(H,\HS(U,H))}
 \right\}
 \\&
 \leq  c\left(2+V_0(x)+V_0(y)\right)^{\gamma},
\end{split}
\end{align}
assume for all $x_1,x_2,x_3,x_4\in \mathcal{O}$,
$i\in\{1,2\}$
that
\begin{equation}  \begin{split}\label{2eq:Dmu.localLip2}
  &\max\Big\{\big\|\smallint_0^1 
  D^i\mu(\lambda x_1+(1-\lambda)x_2)-D^i\mu(\lambda x_3+(1-\lambda)x_4)
  \,d\lambda
  \big\|_{L^{(i)}(H,H)},
  \\&\qquad\quad
  \big\|\smallint_0^1
     D^i\sigma(\lambda x_1+(1-\lambda)x_2)
        -D^i\sigma(\lambda x_3+(1-\lambda)x_4)
   \,d\lambda
  \big\|_{L^{(i)}(H,\HS(U,H))}
  \Big\}
  \\&
  \leq c\smallint_0^1\lambda\|x_1-x_3\|_H+(1-\lambda)\|x_2-x_4)\|_H
  \,d\lambda\,
  \big(4+\smallsum_{j=1}^4 V_0(x_i)\big)^{\gamma},
\end{split}     \end{equation}
and for all $(s,t)\in\Delta_T$,
$x\in \mathcal{O}$, $v,w\in H$, $h\in\R\setminus\{0\}$ with $x+hw\in \mathcal{O}$
let
$
	D_{s,t}^{x,h}(w),D_s^{x,h}(v,w) \colon \Omega \to H
$
be the functions
which satisfy that
\begin{align} \label{2eq:def:Zvy}
D_{s,t}^{x,h}(w)&= 
\frac{X_{s,t}^{x+h w} -X_{s,t}^{x}}{h}, \text{ and}
\\
\label{2eq:def:Dvwy}
D_{s,t}^{x,h}(v,w)&= 
\frac{\frac{\partial}{\partial x}X_{s,t}^{x+h w}(v) -\frac{\partial}{\partial x}X_{s,t}^{x}(v)}{h}.
\end{align}
\end{sett}

\begin{lemma}\label{2l:C2.implies.C0}
  Assume Setting~\ref{2sett:exists:C2}
  and let $t\in[0,T]$, $x,y\in \mathcal{O}$ satisfy $x\neq y$.
  Then
\begin{equation}  \begin{split}\label{2eq:ass.41}
  &\big\langle x-y,\mu(x)-\mu(y)\big\rangle_H+\tfrac{1}{2}\big\|\sigma(x)-\sigma(y)\big\|^2_{\HS(U,H)}
  +
  \tfrac{(3p(1+\theta)^3(1+\delta)^2-1)
    \big\|\big\langle x-y,\sigma(x)-\sigma(y)\big\rangle_H\big\|_{\HS(U,\R)}^2
    }{\|x-y\|_H^2}
  \\&
  \leq \|x-y\|^2_H\cdot\Big(\phi(t)
  +\tfrac{V_0(x)+V_0(y)}{2q_0Te^{\alpha_0t}}
  +\tfrac{\bar{V}(t,x)+\bar{V}(t,y)}{2q_1e^{\alpha_1t}}
  \Big).
\end{split}     \end{equation}
\end{lemma}
\begin{proof}[Proof of Lemma~\ref{2l:C2.implies.C0}]
  The fundamental theorem of calculus, convexity of $\mathcal{O}$, and 
  assumption~\eqref{2eq:Lip.ass.strong}
  (applied with $v\defeq x-y$
  in the notation of 
  assumption~\eqref{2eq:Lip.ass.strong})
  yield that
\begin{equation}  \begin{split}
  &\big\langle x-y,\mu(x)-\mu(y)\big\rangle_H+\tfrac{1}{2}\big\|\sigma(x)-\sigma(y)\big\|^2_{\HS(U,H)}
  +
  \tfrac{(3p(1+\theta)^3(1+\delta)^2-1)
    \big\|\big\langle x-y,\sigma(x)-\sigma(y)\big\rangle_H\big\|_{\HS(U,\R)}^2
    }{\|x-y\|_H^2}
  \\&
  =\big\langle x-y,\smallint_0^1\mu'(\lambda x+(1-\lambda)y)\,d\lambda (x-y)\big\rangle_H
  +\tfrac{1}{2}\big\|\smallint_0^1\sigma'(\lambda x+(1-\lambda)y)\,d\lambda (x-y)\big\|^2_{\HS(U,H)}
  \\&\qquad
  +
  \tfrac{(3p(1+\theta)^3(1+\delta)^2-1)
    \big\|\big\langle x-y,\smallint_0^1\sigma'(\lambda x+(1-\lambda)y)\,d\lambda (x-y)\big\rangle_H\big\|_{\HS(U,\R)}^2}{\|x-y\|_H^2}
  \\&
  \leq \|x-y\|^2_H\cdot\Big(\phi(t)
  +\tfrac{V_0(x)+V_0(y)}{2q_0Te^{\alpha_0t}}
  +\tfrac{\bar{V}(t,x)+\bar{V}(t,y)}{2q_1e^{\alpha_1t}}
  \Big).
\end{split}     \end{equation}
 This completes the proof of Lemma~\ref{2l:C2.implies.C0}.
\end{proof}
\begin{lemma}[Second order difference processes satisfy affine-linear SDEs] \label{2lem:Dvw:eq}
Assume Setting~\ref{2sett:exists:C2} and let 
$(s,t)\in \Delta_T$, $x\in \mathcal{O}$, $v,w\in H$, $h \in \R\setminus\{0\}$ satisfy  $x + w h \in \mathcal{O}$. 
Then it holds a.s.~that
\begin{align} \label{2eq:eq:Dvwy}
\begin{split}
D_{s,t}^{x,h}(v,w) 
	& = 
\int_s^t \mu' ( X_{s,r}^{x} )D_{s,r}^{x,h}(v,w)\,dr
+
\int_s^t \sigma' ( X_{s,r}^{x} )D_{s,r}^{x,h}(v,w)\,dW_r
\\&
\quad
+\int_s^t \int_0^1\mu''(\lambda X_{s,r}^{x+wh}+(1-\lambda)X_{s,r}^{x})\,d\lambda
\Big(D_{s,r}^{x,h}(w) ,\tfrac{\partial}{\partial x}X_{s,r}^{x+wh}(v)\Big)\,dr
	\\
	& \quad
+\int_s^t \int_0^1\sigma''(\lambda X_{s,r}^{x+wh}+(1-\lambda)X_{s,r}^{x})\,d\lambda
\Big(D_{s,r}^{x,h}(w) ,\tfrac{\partial}{\partial x}X_{s,r}^{x+wh}(v)\Big)\,dW_r.
\end{split}
\end{align}
\end{lemma}
\begin{proof}[Proof of Lemma~\ref{2lem:Dvw:eq}]
The chain rule, continuous differentiability of $\mu$, $\sigma$,
and of $X_{s,r}^{\cdot}(\omega)$, $r\in[s,t]$, $\omega\in\Omega$,
and equation~\eqref{2eq:def:X} imply that for all $z\in O$ it holds a.s.\ that
\begin{equation}  \begin{split}
  \tfrac{\partial}{\partial z}X_{s,t}^z(v)=v
  +\int_s^t\mu'(X_{s,r}^{z})
  \tfrac{\partial}{\partial z}X_{s,r}^z(v)\,dr
  +\int_s^t\sigma'(X_{s,r}^{z})
  \tfrac{\partial}{\partial z}X_{s,r}^z(v)\,dW_r.
\end{split}     \end{equation}
This
and
equation~\eqref{2eq:def:Dvwy}
yield that it holds a.s.~that
\begin{equation}  \begin{split}
D_{s,t}^{x,h}(v,w) 
	 = 
  \tfrac{v-v}{h}&+
\int_s^t \frac{\mu' ( X_{s,r}^{x+ w h}) \tfrac{\partial}{\partial x}X_{s,r}^{x+wh}(v) 
- \mu' ( X_{s,r}^{x} )  \tfrac{\partial}{\partial x}X_{s,r}^{x+wh}(v)
}{h}\, d r
\\&+\int_s^t\frac{
 \mu' ( X_{s,r}^{x} )  \tfrac{\partial}{\partial x}X_{s,r}^{x+wh}(v)
- \mu' ( X_{s,r}^{x} )  \tfrac{\partial}{\partial x}X_{s,r}^{x}(v)
}{h}\, d r
	\\
	& 
+\int_s^t \frac{\sigma' ( X_{s,r}^{x+ w h}) \tfrac{\partial}{\partial x}X_{s,r}^{x+wh}(v)
- \sigma' ( X_{s,r}^{x} )  \tfrac{\partial}{\partial x}X_{s,r}^{x+wh}(v)
}{h}\, d W_r
\\
&+\int_s^t\frac{
\sigma' ( X_{s,r}^{x} )  \tfrac{\partial}{\partial x}X_{s,r}^{x+wh}(v)
- \sigma' ( X_{s,r}^{x} )  \tfrac{\partial}{\partial x}X_{s,r}^{x}(v)
}{h}\, d W_r.
\end{split}     \end{equation}
This,
$\mu\in C^2(O,H)$, $\sigma\in C^2(O,\HS(U,H))$,
\eqref{2eq:def:Dvwy},
\eqref{2eq:def:Zvy},
and
the fundamental theorem of calculus
yield that it holds a.s.~that
\begin{equation}  \begin{split}
D_{s,t}^{x,h}(v,w) 
	& = 
\int_s^t \mu' ( X_{s,r}^{x} )D_{s,r}^{x,h}(v,w)\,dr
+
\int_s^t \sigma' ( X_{s,r}^{x} )D_{s,r}^{x,h}(v,w)\,dW_r
\\&
\quad
+\int_s^t \int_0^1\mu''(\lambda X_{s,r}^{x+wh}+(1-\lambda)X_{s,r}^{x})\,d\lambda
D_{s,r}^{x,h}(w) \tfrac{\partial}{\partial x}X_{s,r}^{x+wh}(v)\,dr
	\\
	& \quad
+\int_s^t \int_0^1\sigma''(\lambda X_{s,r}^{x+wh}+(1-\lambda)X_{s,r}^{x})\,d\lambda
D_{s,r}^{x,h}(w) \tfrac{\partial}{\partial x}X_{s,r}^{x+wh}(v)\,dW_r.
\end{split}     \end{equation}
The proof of Lemma~\ref{2lem:Dvw:eq} is thus completed.
\end{proof}
\subsection{Strong local H\"older estimate}\label{sec:5.1}
The following lemma proves strong local H\"older continuity
of the second-order difference quotients.
\begin{lemma}[Strong local H\"older estimate for second order difference quotients]\label{2l:local.Lip.C2}
  Assume Setting~\ref{2sett:exists:C2} and let 
  $s_1,s_2,t_1,t_2\in[0,T]$, $x_1,x_2\in O$, $v_1,v_2,w_1,w_2\in H$, $h_1,h_2\in\R\setminus\{0\}$
  satisfy that
  $s_1\leq t_1$, $s_2\leq t_2$, $x_1+w_1h_1,x_2+w_2h_2\in O$,
  and $s_1\leq s_2$.
  Then it holds that
  \begin{equation}  \begin{split}
    &\Big\|
     D_{s_1,t_1}^{x_1,h_1}(v_1,w_1)-D_{s_2,t_2}^{x_2,h_2}(v_2,w_2)
     \Big\|_{L^{p}(\P;H)}
    \\&\leq
  (1+\max_{i\in\{1,2\}}\|w_i\|_H)^2(1+\|v_1\|_H)^2(1+4c)(p^2+T)
  \\&\cdot \left(\|x_1-x_2\|_H+\| w_1 h_1- w_2 h_2\|_H
  +\sqrt{|s_1-s_2|}
  +\sqrt{|t_1-t_2|}
  +\|w_1-w_2\|_H
  +\|v_1-v_2\|_H
  \right)
  \\&\cdot
    \Bigg[e^{2\alpha_0 \gamma T}
    \left(\tfrac{12p(1+\theta)^3(1+\delta)^2(1+\gamma)}{\min\{\delta,1\}}
    +2\sqrt{T}
    +\smallint_0^T \tfrac{2\beta_0}{e^{\alpha_0 u}}\,du
    +2\max_{\iota\in\{0,1\},j\in\{1,2\}}
    V_0(x_j+\iota {w}_j{h}_j)
  \right)^{2\gamma+2}
  \\&\quad
  \cdot
  (1+c)^2\max_{\iota\in\{0,1\},j\in\{1,2\}}
  \exp\!\left(
        3\int_{0}^{T}  
        \big[
        \constFun(u)
        +
        \sum_{i=0}^1
        \tfrac{
          \beta_{ i } 
        }{
          q_{i} e^{ \alpha_{ i } u }
        }
        \big]
        \,
        du
        +
        3\sum_{ i = 0 }^1 
        \tfrac{V_i(x_j+\iota {w}_j{h}_j)}{ q_{i} } 
      \right)\Bigg]^{5}
   \\&\quad\cdot
  \max_{\iota\in\{0,1\},i\in\{1,2\}}
   4^{\gamma}e^{\alpha_i T\gamma}\Big(\tfrac{p(1+\theta)(1+\delta)\gamma}{\delta}+\int_{s_i}^{T}\tfrac{\beta_i}{e^{\alpha_iu}}\,du
   +V_0({x_i+\iota w_ih_i})\Big)^{\gamma}
  .
  \end{split}     \end{equation}
\end{lemma}
\begin{proof}[Proof of Lemma~\ref{2l:local.Lip.C2}]
  Without loss of generality we additionally assume throughout this proof
that $q_0+q_1<\infty$ (otherwise apply the result for each 
sufficiently large $n\in\N$
with $\theta_n, \delta_n, q_{0,n}, q_{1,n}\in(0,\infty)$ such that
$q_{0,n}=\min\{q_0,n\}$, $q_{1,n}=\min\{q_1,n\}$,
$\tfrac{\theta_n}{6p(1+\theta)^3(1+\delta)^2}=\tfrac{1}{\min\{q_0,n\}}+\tfrac{1}{\min\{q_1,n\}}$,
$(1+\theta_n)^3(1+\delta_n)^2=(1+\theta)^3(1+\delta)^2$
and let $n\to\infty$).
  Throughout this proof let $Y,a,\zeta^{\mu}\colon[0,T-s_2]\times\Omega\to H$, $b,\zeta^{\sigma}\colon[0,T-s_2]\times\Omega\to\HS(U,H)$,
  and $\eta\colon[0,T-s_2]\times\Omega\to L(H,\HS(U,H))$
  be the functions which satisfy for all $r\in[0,T-s_2]$ that
  \begin{align}
    \label{2eq:ar2}
    a_r&=
      \mu'(X_{s_1,s_2+r}^{x_1})D_{s_1,s_2+r}^{x_1,h_1}(v_1,w_1)
      -
      \mu'(X_{s_2,s_2+r}^{x_2})D_{s_2,s_2+r}^{x_2,h_2}(v_2,w_2)
      \\&\qquad\qquad\nonumber
      +
     \int_0^1
     \mu'' \Big( \lambda X_{s_1,s_2+r}^{x_1+w_1h_1} + (1-\lambda)
     X_{s_1,s_2+r}^{x_1}  \Big)\,d\lambda 
     \Big(D_{s_1,s_2+r}^{x_1,h_1}(w_1),\tfrac{\partial}{\partial x} X_{s_1,s_2+r}^{x_1+w_1h_1}(v_1)\Big)
     \\&\qquad\qquad\nonumber
     -
     \int_0^1\mu'' \Big( \lambda X_{s_2,s_2+r}^{x_2+w_2h_2} + (1-\lambda)
     X_{s_2,s_2+r}^{x_2}  \Big)\,d\lambda 
     \Big(D_{s_2,s_2+r}^{x_2,h_2}(w_2),\tfrac{\partial}{\partial x} X_{s_2,s_2+r}^{x_2+w_2h_2}(v_2)\Big),
  \\
    \label{2eq:br2}
   b_r&=   \sigma'(X_{s_1,s_2+r}^{x_1})D_{s_1,s_2+r}^{x_1,h_1}(v_1,w_1)
      -
      \sigma'(X_{s_2,s_2+r}^{x_2})D_{s_2,s_2+r}^{x_2,h_2}(v_2,w_2)
      \\&\qquad\qquad\nonumber
      +
     \int_0^1
     \sigma'' \Big( \lambda X_{s_1,s_2+r}^{x_1+w_1h_1} + (1-\lambda)
     X_{s_1,s_2+r}^{x_1}  \Big)\,d\lambda 
     \Big(D_{s_1,s_2+r}^{x_1,h_1}(w_1),\tfrac{\partial}{\partial x} X_{s_1,s_2+r}^{x_1+w_1h_1}(v_1)\Big)
     \\&\qquad\qquad\nonumber
     -
     \int_0^1\sigma'' \Big( \lambda X_{s_2,s_2+r}^{x_2+w_2h_2} + (1-\lambda)
     X_{s_2,s_2+r}^{x_2}  \Big)\,d\lambda 
     \Big(D_{s_2,s_2+r}^{x_2,h_2}(w_2),\tfrac{\partial}{\partial x} X_{s_2,s_2+r}^{x_2+w_2h_2}(v_2)\Big),
  \end{align}
and
  \begin{align}
    \label{2eq:Yr2}
     Y_r&=
     D_{s_1,s_2+r}^{x_1,h_1}(v_1,w_1)-D_{s_2,s_2+r}^{x_2,h_2}(v_2,w_2),
   \\\label{eq:eta}
     \eta_r&=\sigma'(X_{s_2,s_2+r}^{x_2}),
   \\
     \zeta_r^{\mu}&=\label{2eq:zeta.mue2}
      \Big(\mu'(X_{s_1,s_2+r}^{x_1})-\mu'(X_{s_2,s_2+r}^{x_2})
      \Big)D_{s_1,s_2+r}^{x_1,h_1}(v_1,w_1)
   \\&\qquad\nonumber
      +
     \int_0^1
     \mu'' \Big( \lambda X_{s_1,s_2+r}^{x_1+w_1h_1} + (1-\lambda)
     X_{s_1,s_2+r}^{x_1}  \Big)\,d\lambda 
     \Big(D_{s_1,s_2+r}^{x_1,h_1}(w_1),\tfrac{\partial}{\partial x} X_{s_1,s_2+r}^{x_1+w_1h_1}(v_1)\Big)
     \\&\qquad\nonumber
     -
     \int_0^1\mu'' \Big( \lambda X_{s_2,s_2+r}^{x_2+w_2h_2} + (1-\lambda)
     X_{s_2,s_2+r}^{x_2}  \Big)\,d\lambda 
     \Big(D_{s_2,s_2+r}^{x_2,h_2}(w_2),\tfrac{\partial}{\partial x} X_{s_2,s_2+r}^{x_2+w_2h_2}(v_2)\Big),
   \\
     \zeta_r^{\sigma}&=\label{2eq:zeta.sigma2}
      \Big(\sigma'(X_{s_1,s_2+r}^{x_1})-\sigma'(X_{s_2,s_2+r}^{x_2})
      \Big)D_{s_1,s_2+r}^{x_1,h_1}(v_1,w_1)
   \\&\qquad\nonumber
      +
     \int_0^1
     \sigma'' \Big( \lambda X_{s_1,s_2+r}^{x_1+w_1h_1} + (1-\lambda)
     X_{s_1,s_2+r}^{x_1}  \Big)\,d\lambda 
     \Big(D_{s_1,s_2+r}^{x_1,h_1}(w_1),\tfrac{\partial}{\partial x} X_{s_1,s_2+r}^{x_1+w_1h_1}(v_1)\Big)
     \\&\qquad\nonumber
     -
     \int_0^1\sigma'' \Big( \lambda X_{s_2,s_2+r}^{x_2+w_2h_2} + (1-\lambda)
     X_{s_2,s_2+r}^{x_2}  \Big)\,d\lambda 
     \Big(D_{s_2,s_2+r}^{x_2,h_2}(w_2),\tfrac{\partial}{\partial x} X_{s_2,s_2+r}^{x_2+w_2h_2}(v_2)\Big).
  \end{align}
  Note that $Y,a,b,\eta,\zeta^{\mu},\zeta^{\sigma}$
  are $(\mathbb{F}_{s_2+r})_{r\in[0,T-s_2]}$-adapted stochastic processes
  with continuous sample paths 
  and, therefore,
  measurable.
  Lemma~\ref{2lem:Dvw:eq} implies that for all $t\in[0,T-s_2]$ it holds
  a.s.\ that
  \begin{equation}  \begin{split}\label{2eq:D.Y2}
     Y_t=&D_{s_1,s_2+t}^{x_1,h_1}(v_1,w_1)-D_{s_2,s_2+t}^{x_2,h_2}(v_2,w_2)
     =
     D_{s_1,s_2}^{x_1,h_1}(v_1,w_1)
     +
     \int_{s_2}^{s_2+t}a_{r-s_2}\,dr
     +
     \int_{s_2}^{s_2+t}b_{r-s_2}\,dW_r
     \\&
     =
     D_{s_1,s_2}^{x_1,h_1}(v_1,w_1)
     +
     \int_{0}^{t}a_{r}\,dr
     +
     \int_{0}^{t}b_{r}\,d(W_{s_2+r}-W_{s_2}).
  \end{split}     \end{equation}
  We consider the one-sided affine-linear growth condition for the It\^o process $Y$.
  Equations~\eqref{2eq:ar2}, \eqref{2eq:Yr2}, and \eqref{2eq:zeta.mue2}
  imply for all $r\in[0,T-s_2]$ that
  \begin{equation}  \begin{split}\label{2eq:for.a2}
    a_r
    &=
      \mu'(X_{s_2,s_2+r}^{x_2})\Big(D_{s_1,s_2+r}^{x_1,h_1}(v_1,w_1)
      -D_{s_2,s_2+r}^{x_2,h_2}(v_2,w_2)\Big)
   \\&\qquad\qquad
      +
      \Big(\mu'(X_{s_1,s_2+r}^{x_1})-\mu'(X_{s_2,s_2+r}^{x_2})
      \Big)D_{s_1,s_2+r}^{x_1,h_1}(v_1,w_1)
   \\&\qquad\qquad
      +
     \int_0^1
     \mu'' \Big( \lambda X_{s_1,s_2+r}^{x_1+w_1h_1} + (1-\lambda)
     X_{s_1,s_2+r}^{x_1}  \Big)\,d\lambda 
     \Big(D_{s_1,s_2+r}^{x_1,h_1}(w_1),\tfrac{\partial}{\partial x} X_{s_1,s_2+r}^{x_1+w_1h_1}(v_1)\Big)
     \\&\qquad\qquad
     -
     \int_0^1\mu'' \Big( \lambda X_{s_2,s_2+r}^{x_2+w_2h_2} + (1-\lambda)
     X_{s_2,s_2+r}^{x_2}  \Big)\,d\lambda 
     \Big(D_{s_2,s_2+r}^{x_2,h_2}(w_2),\tfrac{\partial}{\partial x} X_{s_2,s_2+r}^{x_2+w_2h_2}(v_2)\Big),
   \\&
     =\mu'(X_{s_2,s_2+r}^{x_2}) Y_r+\zeta_r^{\mu}.
  \end{split}     \end{equation}
  Analogously, equations~\eqref{2eq:br2}, \eqref{2eq:Yr2}, \eqref{2eq:zeta.sigma2},
  and~\eqref{eq:eta} imply for all $r\in[0,T-s_2]$ that
  \begin{equation}  \begin{split}\label{2eq:for.b2}
    b_r
     =\sigma'(X_{s_2,s_2+r}^{x_2}) Y_r+\zeta_r^{\sigma}
     =\eta_rY_r+\zeta_r^{\sigma}.
  \end{split}     \end{equation}
  Equation~\eqref{2eq:for.a2},
  the Cauchy-Schwarz inequality, and Young's inequality
  yield for all $r\in[0,T-s_2]$ that
  \begin{equation}  \begin{split}\label{2eq:estimate.a2}
    \langle Y_r,a_r\rangle_H
    &\leq \left\langle Y_r,
     \mu' ( X_{s_2,s_2+r}^{x_2}) \,Y_r
     \right\rangle_H
    +\big\| Y_r\big\|_H
    \big\| \zeta_r^{\mu}\big\|_H
   \\&
    \leq\left\langle Y_r,
     \Big(\mu' ( X_{s_2,s_2+r}^{x_2})+\delta\Big) \,Y_r
     \right\rangle _H
   +\tfrac{1}{4\delta}
   \Big\|\zeta_r^{\mu}   \Big\|_{H}^2.
  \end{split}     \end{equation}
  Similarly, equation~\eqref{2eq:for.b2}, the Cauchy-Schwarz inequality, and Young's inequality
  imply for all $r\in[0,T-s_2]$ that
  \begin{equation}  \begin{split}\label{2eq:estimate.b2}
    &\tfrac{1}{2}\|b_r\|_{\HS(U,H)}^2
    +\tfrac{p(1+\theta)-2}{2}\tfrac{\left\|
    \langle 
     Y_r, b_r
    \rangle_H
    \right\|_{\HS(U,\R)}^2
    }{
      \|Y_r\|_H^2
    }
    =
    \tfrac{1}{2}\|\eta_rY_r+\zeta^{\sigma}_r\|_{\HS(U,H)}^2
    +\tfrac{p(1+\theta)-2}{2}\tfrac{\left\|
    \langle 
     Y_r, \eta_rY_r+\zeta^{\sigma}_r
    \rangle_H
    \right\|_{\HS(U,\R)}^2
    }{
      \|Y_r\|_H^2
    }
    \\&
    =
    \tfrac{1}{2}\|\eta_rY_r\|_{\HS(U,H)}^2
    +\langle \eta_rY_r,\zeta^{\sigma}_r\rangle_{\HS(U,H)}
    +
    \tfrac{1}{2}\|\zeta^{\sigma}_r\|_{\HS(U,H)}^2
    \\&\qquad
    +\tfrac{p(1+\theta)-2}{2}\tfrac{\left\|
    \langle 
     Y_r, \eta_rY_r
    \rangle_H
    \right\|_{\HS(U,\R)}^2
    }{
      \|Y_r\|_H^2
    }
    +\tfrac{p(1+\theta)-2}{2}\tfrac{
    2\left\langle \langle Y_r,\eta_rY_r\rangle_H,\langle Y_r,\zeta^{\sigma}_r\rangle_H
    \right\rangle_{\HS(U,\R)}
    }{
      \|Y_r\|_H^2
    }
    +\tfrac{p(1+\theta)-2}{2}\tfrac{\left\|
    \langle 
     Y_r, \zeta^{\sigma}_r
    \rangle_H
    \right\|_{\HS(U,\R)}^2
    }{
      \|Y_r\|_H^2
    }
    \\&
    \leq
    \tfrac{1}{2}\|\eta_rY_r\|_{\HS(U,H)}^2
    +\| \eta_rY_r\|_{\HS(U,H)}\left\|\zeta^{\sigma}_r\right\|_{\HS(U,H)}
    +
    \tfrac{1}{2}\|\zeta^{\sigma}_r\|_{\HS(U,H)}^2
    \\&\qquad
    +\tfrac{p(1+\theta)-2}{2}\tfrac{\left\|
    \langle 
     Y_r, \eta_rY_r
    \rangle_H
    \right\|_{\HS(U,\R)}^2
    }{
      \|Y_r\|_H^2
    }
    +\tfrac{p(1+\theta)-2}{2}\tfrac{
    2\left\| \langle Y_r,\eta_rY_r\rangle_H\right\|_{\HS(U,\R)}
    \left\|\langle Y_r,\zeta^{\sigma}_r\rangle_H\right\|_{\HS(U,\R)}
    }{
      \|Y_r\|_H^2
    }
    +\tfrac{p(1+\theta)-2}{2}\tfrac{\left\|
    \langle 
     Y_r, \zeta^{\sigma}_r
    \rangle_H
    \right\|_{\HS(U,\R)}^2
    }{
      \|Y_r\|_H^2
    }
    \\&
    \leq
    \tfrac{1+\delta}{2}\|\eta_rY_r\|_{\HS(U,H)}^2
    +\tfrac{(p(1+\theta)-2)(1+\delta)}{2}\tfrac{\left\|
    \langle 
     Y_r, \eta_rY_r
    \rangle_H
    \right\|_{\HS(U,\R)}^2
    }{
      \|Y_r\|_H^2
    }
    +
    (\tfrac{1+\delta^{-1}}{2}+\tfrac{(p(1+\theta)-2)(1+\delta^{-1})}{2})\|\zeta^{\sigma}_r\|_{\HS(U,H)}^2
    \\&
    =\tfrac{1+\delta}{2}\|\eta_rY_r\|_{\HS(U,H)}^2
    +\tfrac{(p(1+\theta)-2)(1+\delta)}{2}\tfrac{\left\|
    \langle 
     Y_r, \eta_rY_r
    \rangle_H
    \right\|_{\HS(U,\R)}^2
    }{
      \|Y_r\|_H^2
    }
    +
    \tfrac{(p(1+\theta)-1)(1+\delta^{-1})}{2}\|\zeta^{\sigma}_r\|_{\HS(U,H)}^2.
  \end{split}     \end{equation}
  Next, \eqref{2eq:estimate.a2}, \eqref{2eq:estimate.b2}, \eqref{2eq:Yr2},
  the fact that $(p(1+\theta)-2)(1+\delta)/2\leq 3p(1+\theta)^3(1+\delta)^2-1$,
  and the hypothesis~\eqref{2eq:Lip.ass.strong}
  imply for all $r\in[0,T-s_2]$ that
  \begin{equation}  \begin{split}
    &\langle 
     Y_r, a_r
    \rangle_H
    +\tfrac{1}{2}\|b_r\|_{\HS(U,H)}^2
    +\tfrac{p(1+\theta)-2}{2}\tfrac{\left\|
    \langle 
     Y_r, b_r
    \rangle_H
    \right\|_{\HS(U,\R)}^2
    }{
      \|Y_r\|_H^2
    }
    \\&
    \leq\left\langle Y_r,
     \Big(\mu' ( X_{s_2,s_2+r}^{x_2}  ) +\delta\Big)
     \,Y_r
     \right\rangle _H
    +\tfrac{1+\delta}{2}\|\sigma' ( X_{s_2,s_2+r}^{x_2}  )Y_r\|_{\HS(U,H)}^2
   \\&
   \qquad
    +\tfrac{(p(1+\theta)-2)(1+\delta)}{2}\tfrac{\left\|
    \langle 
     Y_r, \sigma' ( X_{s_2,s_2+r}^{x_2}  )Y_r
    \rangle_H
    \right\|_{\HS(U,\R)}^2
    }{
      \|Y_r\|_H^2
    }
    +
    \tfrac{(p(1+\theta)-1)(1+\delta^{-1})}{2}\|\zeta^{\sigma}_r\|_{\HS(U,H)}^2
  + \tfrac{1}{4\delta}
   \|\zeta_r^{\mu}   \|_{H}^2
    \\&
    \leq \|Y_r\|_H^2\left(\phi(s_2+r)
  +\tfrac{V_0(X_{s_2,s_2+r}^{x_2})+V_0(X_{s_2,s_2+r}^{x_2})}{2q_0Te^{\alpha_0(s_2+r)}}
  +\tfrac{\bar{V}(s_2+r,X_{s_2,s_2+r}^{x_2})+\bar{V}(s_2+r,X_{s_2,s_2+r}^{x_2})}{2q_1e^{\alpha_1(s_2+r)}}
  \right)
  \\&\qquad
    +
    \tfrac{(p(1+\theta)-1)(1+\delta^{-1})}{2}\|\zeta^{\sigma}_r\|_{\HS(U,H)}^2
  + \tfrac{1}{4\delta}
   \|\zeta_r^{\mu}   \|_{H}^2.
  \end{split}     \end{equation}
  This, \eqref{2eq:D.Y2},
  nonnegativity of $\constFun,V_0,\bar{V}$,
  Proposition~\ref{prop:moments:hilbert}
  (applied for all $t\in(0,T-s_2]$ with $p\defeq p(1+\theta)$, $T\defeq t$, $\P\defeq \P(\cdot|\mathbb{F}_{s_2})$, 
  $\mathbb{F}=(\mathbb{F}_{s_2+r})_{r\in[0,t]}$,
  $W\defeq W_{s_2+\cdot}-W_{s_2}$, $X\defeq Y$,
  \begin{align}
    \alpha&=\Big(\phi(s_2+r)
    +\tfrac{V_0(X_{s_2,s_2+r}^{x_2})}{q_0Te^{\alpha_0(s_2+r)}}
    +\tfrac{\bar{V}(s_2+r,X_{s_2,s_2+r}^{x_2})}{q_1e^{\alpha_1(s_2+r)}}\Big)_{r\in[0,t]},
  \\
  \beta&=
    \Big(\left((p(1+\theta)-1)(1+\delta^{-1})\|\zeta^{\sigma}_r\|_{\HS(U,H)}^2
  + \tfrac{1}{2\delta}
   \|\zeta_r^{\mu}\|_H^2\right)^{\frac{1}{2}}\Big)_{r\in[0,t]},
  \end{align}
  $q_1=p$, $q_2=p(1+\theta)/\theta$
  in the notation of
  Proposition~\ref{prop:moments:hilbert}),
  the fact that
  $\|\cdot\|_{L^{\frac{p(1+\theta)}{2}}(\P(\cdot|\mathbb{F}_{s_2});\R)}$ is a norm,
  the triangle inequality,
  the fact that $\tfrac{\theta}{p(1+\theta)}\geq \tfrac{1}{q_0}+\tfrac{1}{q_1}$,
  and Lemma~\ref{lem:multiple_exp:guess}
  (applied for all $t\in(0,T-s_2]$ with $T\defeq t$, $s\defeq s_2$, $\P\defeq \P(\cdot|\mathbb{F}_{s_2})$,
  $X^{1}\defeq X_{s_2,\cdot}^{x_2}$,
  $X^{2}\defeq X_{s_2,\cdot}^{x_2}$,
  $X^{3}\defeq X_{s_2,\cdot}^{x_2}$,
  $X^{4}\defeq X_{s_2,\cdot}^{x_2}$,
  $q\defeq 6p(1+\theta)^3(1+\delta)^2/\theta$
  in the notation of
  Lemma~\ref{lem:multiple_exp:guess})
  yield for all $t\in[0,T-s_2]$ that
  it holds a.s.\ that
  \begin{equation}  \begin{split}
    &\Big\|
     D_{s_1,s_2+t}^{x_1,h_1}(v_1,w_1)-D_{s_2,s_2+t}^{x_2,h_2}(v_2,w_2)
     \Big\|_{L^{p}(\P(\cdot|\mathbb{F}_{s_2});H)}
     =
    \Big\|
     Y_t
     \Big\|_{L^{p}(\P(\cdot|\mathbb{F}_{s_2});H)}
     \\&
     \leq
     \left(\|Y_0\|_{L^{p(1+\theta)}(\P(\cdot|\mathbb{F}_{s_2});H)}^2
     +\int_0^t\Big\|\Big(\tfrac{(p(1+\theta)-1)(1+\delta)}{\delta}\|\zeta_r^{\sigma}\|_{\HS(U,H)}^2
     +\tfrac{1}{2\delta}\|\zeta_r^{\mu}\|_{H)}^2\Big)^{\frac{1}{2}}\Big\|_{L^{p(1+\theta)}(\P(\cdot|\mathbb{F}_{s_2});\R)}^2
     \,ds\right)^{\!\frac{1}{2}}
  \\&
  \cdot
   \Big\|\exp\Big(\smallint_0^{t}
  \big[\phi(s_2+r)
  +\tfrac{V_0(X_{s_2,s_2+r}^{x_2})}{q_0Te^{\alpha_0(s_2+r)}}
  +\tfrac{\bar{V}(s_2+r,X_{s_2,s_2+r}^{x_2})}{q_1e^{\alpha_1(s_2+r)}}
  \big]
   \,dr\Big)\Big\|_{L^{\frac{p(1+\theta)}{\theta}}(\P(\cdot|\mathbb{F}_{s_2});\R)}
     \\&
\leq
     \Big\|
     D_{s_1,s_2}^{x_1,h_1}(v_1,w_2)
     \Big\|_H
     +\left(\int_0^t\tfrac{(p(1+\theta)-1)(1+\delta)}{\delta}\|\zeta_r^{\sigma}\|_{L^{p(1+\theta)}(\P(\cdot|\mathbb{F}_{s_2});\HS(U,H))}^2
     +\tfrac{1}{2\delta}\|\zeta_r^{\mu}\|_{L^{p(1+\theta)}(\P(\cdot|\mathbb{F}_{s_2});H)}^2\,ds
     \right)^{\!\frac{1}{2}}
  \\&\qquad
  \cdot
   \exp\Big(\smallint_{s_2}^{T}
   \big[
  \phi(r)
        +
        \tfrac{
          \beta_{ 0 }
        }{
          q_{0} e^{ \alpha_{ 0 } r }
        }
        + 
        \tfrac{ \beta_{ 1 } }{ 	q_{ 1 } e^{ \alpha_{ 1 } r } }
        \big]
        \,
        dr
        +\sum_{i=0}^1 \tfrac{V_i(x_2)}{q_ie^{\alpha_is_2}}
   \Big).
  \end{split}     \end{equation}
  This implies that
  \begin{equation}  \begin{split}\label{2eq:deltaD}
    &\Big\|
     D_{s_1,t_2}^{x_1,h_1}(v_1,w_1)-D_{s_2,t_2}^{x_2,h_2}(v_2,w_2)
     \Big\|_{L^{p}(\P;H)}
     \\&
    =\Big\|\Big\|
     D_{s_1,s_2+t_2-s_2}^{x_1,h_1}(v_1,w_1)-D_{s_2,s_2+t_2-s_2}^{x_2,h_2}(v_2,w_2)
     \Big\|_{L^{p}(\P(\cdot|\mathbb{F}_{s_2});H)}
     \Big\|_{L^{p}(\P;\R)}
 \\&
 \leq
     \left(
    \Big\|
     D_{s_1,s_2}^{x_1,h_1}(v_1,w_1)
     \Big\|_{L^{p}(\P;H)}
     +
     \sup_{r\in[0,T-s_2]}\max\big\{
    \big\|\zeta^{\mu}_r\big\|_{L^{p(1+\theta)}(\P;H)},
    \big\|\zeta^{\sigma}_r\big\|_{L^{p(1+\theta)}(\P;\HS(U,H))}
    \big\}
    \sqrt{\tfrac{t_2 p(1+\theta)(1+\delta)}{\delta}}
    \right)
  \\&
  \cdot
   \exp\Big(\smallint_{s_2}^{T}
   \big[
  \phi(r)
        +
        \tfrac{
          \beta_{ 0 }
        }{
          q_{0} e^{ \alpha_{ 0 } r }
        }
        + 
        \tfrac{ \beta_{ 1 } }{ 	q_{ 1 } e^{ \alpha_{ 1 } r } }
        \big]
        \,
        dr
        +\sum_{i=0}^1 \tfrac{V_i(x_2)}{q_ie^{\alpha_is_2}}
   \Big).
  \end{split}     \end{equation}
  Moreover, \eqref{2eq:zeta.mue2},
  the fact that for all $a_1,a_2,a_3\in\R$, $b_1,b_2,b_3\in\R$ it holds that
    $a_1a_2a_3-b_1b_2b_3=(a_1-b_1)a_2a_3+b_1(a_2-b_2)a_3+b_1b_2(a_3-b_3)$,
  and
  the triangle inequality
  yield for all $r\in[0,T-s_2]$ that
  \begin{equation}  \begin{split}
   &\Big\|\zeta_r^{\mu}   \Big\|_{L^{p(1+\theta)}(\P;H)}
   \leq \left\|\Big\|
      \mu'(X_{s_1,s_2+r}^{x_1})-\mu'(X_{s_2,s_2+r}^{x_2})
      \Big\|_{L(H,H)}
      \Big\|D_{s_1,s_2+r}^{x_1,h_1}(v_1,w_1)\Big\|_H
      \right\|_{L^{p(1+\theta)}(\P;\R)}
   \\&\nonumber
      +
      \bigg\|
      \Big\|
     \int_0^1
     \mu'' \Big( \lambda X_{s_1,s_2+r}^{x_1+w_1h_1} + (1-\lambda)
     X_{s_1,s_2+r}^{x_1}  \Big)
     -\mu'' \Big( \lambda X_{s_2,s_2+r}^{x_2+w_2h_2} + (1-\lambda)
     X_{s_2,s_2+r}^{x_2}  \Big)\,d\lambda 
     \Big\|_{L^{(2)}(H,H)}
     \\&\qquad
     \cdot
     \Big\|
     D_{s_1,s_2+r}^{x_1,h_1}(w_1)
     \Big\|_H
     \Big\|\tfrac{\partial}{\partial x} X_{s_1,s_2+r}^{x_1+w_1h_1}(v_1)
     \Big\|_H
     \bigg\|_{L^{p(1+\theta)}(\P;\R)}
     \\&\nonumber
      +
      \bigg\|
      \Big\|
     \int_0^1
          \mu'' \Big( \lambda X_{s_2,s_2+r}^{x_2+w_2h_2} + (1-\lambda)
     X_{s_2,s_2+r}^{x_2}  \Big)\,d\lambda 
     \Big\|_{L^{(2)}(H,H)}
     \\&\qquad
     \cdot
     \Big\|
     D_{s_1,s_2+r}^{x_1,h_1}(w_1)
     -D_{s_2,s_2+r}^{x_2,h_2}(w_2)
     \Big\|_H
     \Big\|\tfrac{\partial}{\partial x} X_{s_1,s_2+r}^{x_1+w_1h_1}(v_1)
     \Big\|_H
     \bigg\|_{L^{p(1+\theta)}(\P;\R)}
     \\&\nonumber
      +
      \bigg\|
      \Big\|
     \int_0^1
          \mu'' \Big( \lambda X_{s_2,s_2+r}^{x_2+w_2h_2} + (1-\lambda)
     X_{s_2,s_2+r}^{x_2}  \Big)\,d\lambda 
     \Big\|_{L^{(2)}(H,H)}
     \\&\qquad
     \cdot
     \Big\|
     D_{s_2,s_2+r}^{x_2,h_2}(w_2)
     \Big\|_H
     \Big\|
       \tfrac{\partial}{\partial x} X_{s_1,s_2+r}^{x_1+w_1h_1}(v_1)
       -
       \tfrac{\partial}{\partial x} X_{s_2,s_2+r}^{x_2+w_2h_2}(v_2)
     \Big\|_H
     \bigg\|_{L^{p(1+\theta)}(\P;\R)}.
  \end{split}     \end{equation}
  Then~\eqref{2eq:growth.Dmue},
  \eqref{2eq:Dmu.localLip2},
  and
  H\"older's inequality
  (applied with
  $\tfrac{1}{p(1+\theta)}=3\tfrac{1}{3p(1+\theta)(1+\delta)}+\tfrac{\delta}{p(1+\theta)(1+\delta)}$)
  show for all $r\in[0,T-s_2]$ that
  \begin{equation}  \begin{split}\label{2eq:estimate.zetamue.1}
   &\Big\|\zeta_r^{\mu}   \Big\|_{L^{{p(1+\theta)}}(\P;H)}
   \leq
     \Big(1+\Big\|D_{s_1,s_2+r}^{x_1,h_1}(v_1,w_1)
     \Big\|_{L^{3{p(1+\theta)}(1+\delta)}(\P;H)}
     \Big)
   \\&\cdot
     \Big(1+
     \max_{i\in\{1,2\}}
     \Big\|D_{s_i,s_2+r}^{x_i,h_i}(w_i)\Big\|_{L^{3p(1+\theta)(1+\delta)}(\P;H)}
     \Big)
     \Big(1+
     \Big\|
       \tfrac{\partial}{\partial x} X_{s_1,s_2+r}^{x_1+w_1h_1}(v_1)
     \Big\|_{L^{3{p(1+\theta)}(1+\delta)}(\P;H)}
     \Big)
   \\&\cdot
   \bigg(
     2c\max_{\iota\in\{0,1\}}\Big\|X_{s_1,s_2+r}^{x_1+\iota w_1h_1}-X_{s_2,s_2+r}^{x_2+\iota w_2h_2}
     \Big\|_{L^{3{p(1+\theta)}(1+\delta)}(\P;H)}
     \\&\qquad
     +c
     \Big\|
     D_{s_1,s_2+r}^{x_1,h_1}(w_1)
     -D_{s_2,s_2+r}^{x_2,h_2}(w_2)
     \Big\|_{L^{3{p(1+\theta)}(1+\delta)}(\P;H)}
     \\&\qquad
     +
     c\Big\|
       \tfrac{\partial}{\partial x} X_{s_1,s_2+r}^{x_1+w_1h_1}(v_1)
       -
       \tfrac{\partial}{\partial x} X_{s_2,s_2+r}^{x_2+w_2h_2}(v_2)
     \Big\|_{L^{3{p(1+\theta)}(1+\delta)}(\P;H)}
     \bigg)
   \\&\cdot
   \max_{\iota\in\{0,1\}}
     \Big\|
     \Big(4
     +V_0(X_{s_1,s_2+r}^{x_1+\iota w_1h_1})
     +V_0(X_{s_1,s_2+r}^{x_1})
     +V_0(X_{s_2,s_2+r}^{x_2+\iota w_2h_2})
     +V_0(X_{s_2,s_2+r}^{x_2})
     \Big)^{\gamma}
   \Big\|_{L^{\frac{{p(1+\theta)}(1+\delta)}{\delta }}(\P;\R)}.
  \end{split}     \end{equation}
  This, Fatou's lemma, and the triangle inequality yield for all $r\in[0,T-s_2]$ that
  \begin{equation}  \begin{split}\label{2eq:estimate.zetamue.2}
   &\Big\|\zeta_r^{\mu}   \Big\|_{L^{{p(1+\theta)}}(\P;H)}
   \leq
     \Big(1+\liminf_{(0,\infty)\ni\eps\to 0}\Big\|
     \tfrac{D_{s_1,s_2+r}^{x_1+h_1w_1,\eps}(v_1)-D_{s_1,s_2+r}^{x_1,\eps}(v_1)}{h_1}
     \Big\|_{L^{3{p(1+\theta)}(1+\delta)}(\P;H)}
     \Big)
   \\&\cdot
     \Big(1+
     \max_{i\in\{1,2\}}
     \tfrac{\big\|X_{s_i,s_2+r}^{x_i+w_ih_i}-X_{s_i,s_2+r}^{x_i}\big\|_{L^{3p(1+\theta)(1+\delta)}(\P;H)}}{|h_i|}
     \Big)
   \\&\cdot
     \Big(1+
    \liminf_{(0,\infty)\ni\eps\to 0}
    \tfrac{\big\|X_{s_1,s_2+r}^{x_1+w_1h_1+\eps v_1}-X_{s_1,s_2+r}^{x_1+w_1h_1}
     \big\|_{L^{3{p(1+\theta)}(1+\delta)}(\P;H)}
     }{\eps}
     \Big)
   \\&\cdot
   \bigg(
     2c\max_{\iota\in\{0,1\}}\Big\|X_{s_1,s_2+r}^{x_1+\iota w_1h_1}-X_{s_2,s_2+r}^{x_2+\iota w_2h_2}
     \Big\|_{L^{3{p(1+\theta)}(1+\delta)}(\P;H)}
     \\&\qquad
     +c
     \Big\|
     D_{s_1,s_2+r}^{x_1,h_1}(w_1)
     -D_{s_2,s_2+r}^{x_2,h_2}(w_2)
     \Big\|_{L^{3{p(1+\theta)}(1+\delta)}(\P;H)}
     \\&\qquad
     +
     c\liminf_{(0,\infty)\ni\eps\to 0}\Big\|
       D_{s_1,s_2+r}^{x_1+w_1h_1,\eps}(v_1)
       -
       D_{s_2,s_2+r}^{x_2+w_2h_2,\eps}(v_2)
     \Big\|_{L^{3{p(1+\theta)}(1+\delta)}(\P;H)}
     \bigg)
   \\&\cdot
   4^\gamma
   \max_{\iota\in\{0,1\},i\in\{1,2\}}
     \Big\|
     1 +V_0(X_{s_i,s_2+r}^{x_i+\iota w_ih_i})
   \Big\|_{L^{\frac{{p(1+\theta)}(1+\delta)\gamma}{\delta }}(\P;\R)}^{\gamma}.
  \end{split}     \end{equation}
  Lemma \ref{2l:C2.implies.C0}
  and Lemma \ref{lem:Hoelder}
  (applied for all $t\in[0,T]$, $u_1\in[0,t]$, $u_2\in[u_1,t]$, $y_1,y_2\in O$
  with
  $\mathcal{O}\defeq O$,
  $p\defeq 6p(1+\theta)^2(1+\delta)^2$,
  $s_1\defeq u_1$,
  $s_2\defeq u_2$,
  $t_1\defeq t$,
  $t_2\defeq t$,
  $x_1\defeq y_1$,
  $x_2\defeq y_2$
  in the notation of Lemma \ref{lem:Hoelder})
  yield for all $t\in[0,T]$, $u_1,u_2\in[0,t]$, $y_1,y_2\in O$, $\tilde{v}_1,\tilde{v}_2\in H$ with $u_1\leq u_2$ that
\begin{equation}  \begin{split}\label{eq:fromLemma32}
  &\|X_{u_1,t}^{y_1}-X_{u_2,t}^{y_2}\|_{L^{3p(1+\theta)(1+\delta)}(\P;H)}
  \leq\|X_{u_1,t}^{y_1}-X_{u_2,t}^{y_2}\|_{L^{6p(1+\theta)^2(1+\delta)^2}(\P;H)}
  \\&
  \leq
\|y_1-y_2\|_H
  \exp\!\left(
        \int_{u_1}^{T}  
        \big[
        \constFun(u)
        +
        \sum_{i=0}^1
        \tfrac{
          \beta_{ i } 
        }
        { q_i e^{ \alpha_{ i } u } }
        \big]
        \,
        du
        +
        \sum_{ i = 0 }^1 
        \tfrac{V_i(y_1)+V_i(y_2)}{2q_ie^{\alpha_i u_1} } 
      \right)
  \\&\quad+
c e^{\alpha_0 \gamma|u_2-u_1|}\Big|6p(1+\theta)^3(1+\delta)^2\gamma+e^{-\alpha_0 u_1}V_0(y_2)
+\textstyle\int_{u_1}^T\tfrac{\beta_0}{e^{\alpha_0u}}\,du\Big|^{\gamma}
\sqrt{|u_2-u_1|}
\\&
\qquad\quad\cdot
\big(\sqrt{T}+6p(1+\theta)^3(1+\delta)^2\big)
  \exp\!\left(
        \int_{u_1}^{T}  
        \big[
        \constFun(u)
        +
        \tfrac{
          \beta_{ 0 } 
        }
        { q_0 e^{ \alpha_{ 0 } u } }
        + 
        \tfrac{ \beta_{ 1 } }
        { q_1 e^{ \alpha_{ 1 } u } }
        \big]
        \,
        du
        +
        \sum_{ i = 0 }^1 
        \tfrac{V_i(y_2)}{q_i e^{\alpha_i u_1} } 
      \right)
\\&\leq \left(\|y_1-y_2\|_H+\sqrt{|u_1-u_2|}\right)
  \\&\quad\cdot
    e^{2\alpha_0 \gamma T}
    \left(\tfrac{12p(1+\theta)^3(1+\delta)^2(1+\gamma)}{\min\{\delta,1\}}
    +2\sqrt{T}
    +\smallint_0^T \tfrac{2\beta_0}{e^{\alpha_0 u}}\,du
    +2\max_{\iota\in\{0,1\},j\in\{1,2\}}
    V_0(y_j+\iota \tilde{v}_j\tilde{h}_j)
  \right)^{2\gamma+2}
  \\&\quad
  \cdot
  (1+c)^2\max_{\iota\in\{0,1\},j\in\{1,2\}}
  \exp\!\left(
        3\int_{0}^{T}  
        \big[
        \constFun(u)
        +
        \sum_{i=0}^1
        \tfrac{
          \beta_{ i } 
        }{
          q_{i} e^{ \alpha_{ i } u }
        }
        \big]
        \,
        du
        +
        3\sum_{ i = 0 }^1 
        \tfrac{V_i(y_j+\iota \tilde{v}_j\tilde{h}_j)}{ q_{i} } 
      \right).
\end{split}     \end{equation}
In addition,
  Lemma \ref{l:local.Lip.C1}
  (applied for all $t\in[0,T]$, $u_1\in[0,t]$, $u_2\in[u_1,t]$, $y_1,y_2\in O$,
  $\tilde{h}_1,\tilde{h}_2\in\R\setminus\{0\}$,
  $\tilde{v}_1,\tilde{v}_2\in H$ satisfying that
  $y_1+\tilde{h}_1\tilde{v}_1,y_2+\tilde{h}_2\tilde{v}_2\in O$
  with
  $\mathcal{O}\defeq O$,
  $p\defeq 3p(1+\theta)(1+\delta)$,
  $s_1\defeq u_1$,
  $s_2\defeq u_2$,
  $t_1\defeq t$,
  $t_2\defeq t$,
  $x_1\defeq y_1$,
  $x_2\defeq y_2$,
  $v_1\defeq \tilde{v}_1$,
  $v_2\defeq \tilde{v}_2$,
  $h_1\defeq \tilde{h}_1$,
  $h_2\defeq \tilde{h}_2$
  in the notation of Lemma \ref{l:local.Lip.C1})
  yields 
  for all $t\in[0,T]$, $u_1,u_2\in[0,t]$, $y_1,y_2\in O$,
  $\tilde{h}_1,\tilde{h}_2\in\R\setminus\{0\}$,
  $\tilde{v}_1,\tilde{v}_2\in H$ with
  $y_1+\tilde{h}_1\tilde{v}_1,y_2+\tilde{h}_2\tilde{v}_2\in O$ and $u_1\leq u_2$ that
  \begin{equation}  \begin{split}\label{eq:fromLemma44}
    &\Big\|
     D_{u_1,t}^{y_1,\tilde{h}_1}(\tilde{v}_1)-D_{u_2,t}^{y_2,\tilde{h}_2}(\tilde{v}_2)
     \Big\|_{L^{3p(1+\theta)(1+\delta)}(\P;H)}
   \\&
   \leq
  \left(\|y_1-y_2\|_H\sqrt{t-u_2}+\|\tilde{v}_1\tilde{h}_1-\tilde{v}_2\tilde{h}_2\|_H+\|\tilde{v}_1-\tilde{v}_2\|_H+\sqrt{|u_1-u_2|}
  \right)
  \\&\quad
  \cdot
    e^{2\alpha_0 \gamma T}
    \left(\tfrac{12p(1+\theta)^3(1+\delta)^2(1+\gamma)}{\min\{\delta,1\}}
    +2\sqrt{T}
    +\smallint_0^T \tfrac{2\beta_0}{e^{\alpha_0 u}}\,du
    +2\max_{\iota\in\{0,1\},j\in\{1,2\}}
    V_0(y_j+\iota \tilde{v}_j\tilde{h}_j)
  \right)^{2\gamma+2}
  \\&\quad
  \cdot
\|\tilde{v}_1\|_H
  (1+c)^2\max_{\iota\in\{0,1\},j\in\{1,2\}}
  \exp\!\left(
        3\int_{0}^{T}  
        \big[
        \constFun(u)
        +
        \sum_{i=0}^1
        \tfrac{
          \beta_{ i } 
        }{
          q_{i} e^{ \alpha_{ i } u }
        }
        \big]
        \,
        du
        +
        3\sum_{ i = 0 }^1 
        \tfrac{V_i(y_j+\iota \tilde{v}_j\tilde{h}_j)}{ q_{i} } 
      \right).
  \end{split}     \end{equation}
  Next, \eqref{2eq:estimate.zetamue.2},
  \eqref{eq:fromLemma44} (applied for all 
  $r\in[0,T-s_2]$, $\eps\in(0,1)$ with
  $t\defeq s_2+r$,
  $u_1\defeq s_1$,
  $u_2\defeq s_1$,
  $y_1\defeq x_1+h_1w_1$, 
  $y_2\defeq x_1$, 
  $\tilde{h}_1\defeq \eps$,
  $\tilde{h}_2=\eps$,
  $\tilde{v}_1=v_1$,
  $\tilde{v}_2=v_1$
  in the notation of \eqref{eq:fromLemma44}),
  \eqref{eq:fromLemma32} (applied for all $i\in\{1,2\}$ with
  $u_1\defeq s_i$,
  $u_2\defeq s_i$,
  $y_1\defeq x_i+w_ih_i$, 
  $y_2\defeq x_i$ 
  in the notation of \eqref{eq:fromLemma32}),
  \eqref{eq:fromLemma32} (applied for all $\eps\in(0,1)$ with
  $u_1\defeq s_1$,
  $u_2\defeq s_1$,
  $y_1\defeq x_1+w_1h_1+\eps v_1$, 
  $y_2\defeq x_1+w_1h_1$ 
  in the notation of \eqref{eq:fromLemma32}),
  \eqref{eq:fromLemma32} (applied for all $\iota\in\{0,1\}$ with
  $u_1\defeq s_1$,
  $u_2\defeq s_2$,
  $y_1\defeq x_1+\iota w_1h_1$, 
  $y_2\defeq x_2+\iota w_2h_2$ 
  in the notation of \eqref{eq:fromLemma32}),
  \eqref{eq:fromLemma44} (applied with
  $u_1\defeq s_1$,
  $u_2\defeq s_2$,
  $y_1\defeq x_1$, 
  $y_2\defeq x_2$, 
  $\tilde{h}_1\defeq h_1$,
  $\tilde{h}_2\defeq h_2$,
  $\tilde{v}_1\defeq w_1$,
  $\tilde{v}_2\defeq w_2$
  in the notation of \eqref{eq:fromLemma44}),
  \eqref{eq:fromLemma44} (applied for all $\eps\in(0,1)$ with
  $u_1\defeq s_1$,
  $u_2\defeq s_2$,
  $y_1\defeq x_1+w_1h_1$, 
  $y_2\defeq x_2+w_2h_2$, 
  $\tilde{h}_1\defeq \eps$,
  $\tilde{h}_2\defeq \eps$,
  $\tilde{v}_1\defeq v_1$,
  $\tilde{v}_2\defeq v_2$
  in the notation of \eqref{eq:fromLemma44}),
  Lemma~\ref{2l:C2.implies.C0},
  and Lemma \ref{l:exp_mom.moments}
  (applied for all $\iota\in\{0,1\}$, $i\in\{1,2\}$, $r\in[0,T-s_2]$
  with $s\defeq s_i$, $X\defeq X_{s_i,\cdot}^{x_i+\iota w_ih_i}$,
  $\alpha\defeq \alpha_i$, $\beta\defeq \beta_i$, $V\defeq V_i$, $t\defeq s_2+r$,
  $p\defeq \tfrac{p(1+\theta)(1+\delta)\gamma}{\delta}$
  in the notation of Lemma \ref{l:exp_mom.moments})
  yield for all $r\in[0,T-s_2]$ that
  \begin{equation}  \begin{split}\label{2eq:estimate.mue}
   &\Big\|\zeta_r^{\mu}   \Big\|_{L^{p}(\P;H)}
  \leq
  (1+\|w_1\|_H\|v_1\|_H)(1+\max_{i\in\{1,2\}}\|w_i\|_H)(1+\|v_1\|_H)
  \\&\cdot c\left(4\|x_1-x_2\|_H+4\| w_1 h_1- w_2 h_2\|_H+4\sqrt{|s_1-s_2|}
  +\|w_1-w_2\|_H
  +\|v_1-v_2\|_H
  \right)(1+T)
  \\&\cdot
    \Bigg[e^{2\alpha_0 \gamma T}
    \left(\tfrac{12p(1+\theta)^3(1+\delta)^2(1+\gamma)}{\min\{\delta,1\}}
    +2\sqrt{T}
    +\smallint_0^T \tfrac{2\beta_0}{e^{\alpha_0 u}}\,du
    +2\max_{\iota\in\{0,1\},j\in\{1,2\}}
    V_0(x_j+\iota {w}_j{h}_j)
  \right)^{2\gamma+2}
  \\&\quad
  \cdot
  (1+c)^2\max_{\iota\in\{0,1\},j\in\{1,2\}}
  \exp\!\left(
        3\int_{0}^{T}  
        \big[
        \constFun(u)
        +
        \sum_{i=0}^1
        \tfrac{
          \beta_{ i } 
        }{
          q_{i} e^{ \alpha_{ i } u }
        }
        \big]
        \,
        du
        +
        3\sum_{ i = 0 }^1 
        \tfrac{V_i(x_j+\iota  w_jh_j)}{ q_{i} } 
      \right)\Bigg]^{1+1+1+1+1+1}
   \\&\quad\cdot
  \max_{\iota\in\{0,1\},i\in\{1,2\}}
   4^{\gamma}e^{\alpha_i (s_2+r)\gamma}\Big(\tfrac{p(1+\theta)(1+\delta)\gamma}{\delta}+\int_{s_i}^{s_2+r}\tfrac{\beta_i}{e^{\alpha_iu}}\,du
   +V_0({x_i+\iota w_ih_i})\Big)^{\gamma}
  .
  \end{split}     \end{equation}
  An analogous argumentation shows for all $r\in[0,T-s_2]$ that
  \begin{equation}  \begin{split}\label{2eq:estimate.sigma}
   &\Big\|\zeta_r^{\sigma}   \Big\|_{L^{p}(\P;\HS(U,H))}
  \leq
  (1+\max_{i\in\{1,2\}}\|w_i\|_H)^2(1+\|v_1\|_H)^2
  \\&\cdot 4c\left(\|x_1-x_2\|_H+\| w_1 h_1- w_2 h_2\|_H+\sqrt{|s_1-s_2|}
  +\|w_1-w_2\|_H
  +\|v_1-v_2\|_H
  \right)
  \\&\cdot
    \Bigg[e^{2\alpha_0 \gamma T}
    \left(\tfrac{12p(1+\theta)^3(1+\delta)^2(1+\gamma)}{\min\{\delta,1\}}
    +2\sqrt{T}
    +\smallint_0^T \tfrac{2\beta_0}{e^{\alpha_0 u}}\,du
    +2\max_{\iota\in\{0,1\},j\in\{1,2\}}
    V_0(x_j+\iota {w}_j{h}_j)
  \right)^{2\gamma+3}
  \\&\quad
  \cdot
  (1+c)^2\max_{\iota\in\{0,1\},j\in\{1,2\}}
  \exp\!\left(
        3\int_{0}^{T}  
        \constFun(u)
        +
        \sum_{i=0}^1
        \tfrac{
          \beta_{ i } 
        }{
          q_{i} e^{ \alpha_{ i } u }
        }
        \,
        du
        +
        3\sum_{ i = 0 }^1 
        \tfrac{V_i(x_j+\iota {w}_j{h}_j)}{ q_{i} } 
      \right)\Bigg]^{4}
   \\&\quad\cdot
  \max_{\iota\in\{0,1\},i\in\{1,2\}}
   4^{\gamma}e^{\alpha_i (s_2+r)\gamma}\Big(\tfrac{p(1+\theta)(1+\delta)\gamma}{\delta}+\int_{s_i}^{s_2+r}\tfrac{\beta_i}{e^{\alpha_iu}}\,du
   +V_0({x_i+\iota w_ih_i})\Big)^{\gamma}
  .
 \end{split}     \end{equation}
  Next,
  Fatou's lemma and
  \eqref{eq:fromLemma44}
  (applied for all $t\in[s_1,T]$, $\eps\in(0,\infty)$ which satisfy that $x_1+h_1w_1+\eps v_1,x_1+\eps v_1\in O$
  with
  $u_1\defeq s_1$,
  $u_2\defeq s_1$,
  $y_1\defeq x_1+h_1w_1$,
  $y_2\defeq x_1$,
  $\tilde{h}_1\defeq \eps$,
  $\tilde{h}_2\defeq \eps$,
  $\tilde{v}_1\defeq v_1$,
  $\tilde{v}_2\defeq v_1$
  in the notation of
  \eqref{eq:fromLemma44})
  prove for all $t\in[s_1,T]$ that
  \begin{equation}  \begin{split}\label{eq:afterFatou1}
    &\Big\|
     D_{s_1,t}^{x_1,h_1}(v_1,w_1)
     \Big\|_{L^{p}(\P;H)}
    \leq\Big\|
     D_{s_1,t}^{x_1,h_1}(v_1,w_1)
     \Big\|_{L^{3p}(\P;H)}
   \\&
     \leq \liminf_{\{\tilde{\eps}\in(0,\infty)\colon x_1+h_1w_1+\tilde{\eps} v_1,x_1+\tilde{\eps} v_1\in D\}\ni
     \eps\to0}\tfrac{\|D_{s_1,t}^{x_1+h_1w_1,\eps}(v_1)-D_{s_1,t}^{x_1,\eps}(v_1)\|_{L^{3p(1+\theta)(1+\delta)}(\P;H)}}
     {|h_1|}
     \\&
     \leq
  \|w_1\|_H\sqrt{t-s_1}
  \\&\quad
  \cdot
    e^{2\alpha_0 \gamma T}
    \left(\tfrac{12p(1+\theta)^3(1+\delta)^2(1+\gamma)}{\min\{\delta,1\}}
    +2\sqrt{T}
    +\smallint_0^T \tfrac{2\beta_0}{e^{\alpha_0 u}}\,du
    +2\max_{\iota\in\{0,1\}}
    V_0(x_1+\iota h_1 w_1)
  \right)^{2\gamma+2}
  \\&\quad
  \cdot
\|{v}_1\|_H
  (1+c)^2\max_{\iota\in\{0,1\}}
  \exp\!\left(
        3\int_{0}^{T}  
        \big[
        \constFun(u)
        +
        \sum_{i=0}^1
        \tfrac{
          \beta_{ i } 
        }{
          q_{i} e^{ \alpha_{ i } u }
        }
        \big]
        \,
        du
        +
        3\sum_{ i = 0 }^1 
        \tfrac{V_i(x_1+\iota h_1 w_1)}{ q_{i} } 
      \right).
  \end{split}     \end{equation}
  This,
  \eqref{2eq:deltaD},
  \eqref{2eq:estimate.mue},
  and
  \eqref{2eq:estimate.sigma} show that
  \begin{equation}  \begin{split}\label{eq:afterFatou2}
    &\Big\|
     D_{s_1,t_2}^{x_1,h_1}(v_1,w_1)-D_{s_2,t_2}^{x_2,h_2}(v_2,w_2)
     \Big\|_{L^{p}(\P;H)}
    \\&\leq
  (1+\max_{i\in\{1,2\}}\|w_i\|_H)^2(1+\|v_1\|_H)^2(1+4c)
  \\&\cdot \left(\|x_1-x_2\|_H+\| w_1 h_1- w_2 h_2\|_H+\sqrt{|s_1-s_2|}
  +\|w_1-w_2\|_H
  +\|v_1-v_2\|_H
  \right)(1+T)
  \\&\cdot
    \Bigg[e^{2\alpha_0 \gamma T}
    \left(\tfrac{12p(1+\theta)^3(1+\delta)^2(1+\gamma)}{\min\{\delta,1\}}
    +2\sqrt{T}
    +\smallint_0^T \tfrac{2\beta_0}{e^{\alpha_0 u}}\,du
    +2\max_{\iota\in\{0,1\},j\in\{1,2\}}
    V_0(x_j+\iota {w}_j{h}_j)
  \right)^{2\gamma+2}
  \\&\quad
  \cdot
  (1+c)^2\max_{\iota\in\{0,1\},j\in\{1,2\}}
  \exp\!\left(
        3\int_{0}^{T}  
        \big[
        \constFun(u)
        +
        \sum_{i=0}^1
        \tfrac{
          \beta_{ i } 
        }{
          q_{i} e^{ \alpha_{ i } u }
        }
        \big]
        \,
        du
        +
        3\sum_{ i = 0 }^1 
        \tfrac{V_i(x_j+\iota {w}_j{h}_j)}{ q_{i} } 
      \right)\Bigg]^{5}
   \\&\quad\cdot
  \max_{\iota\in\{0,1\},i\in\{1,2\}}
   4^{\gamma}e^{\alpha_i T\gamma}\Big(\tfrac{p(1+\theta)(1+\delta)\gamma}{\delta}+\int_{s_i}^{T}\tfrac{\beta_i}{e^{\alpha_iu}}\,du
   +V_0({x_i+\iota w_ih_i})\Big)^{\gamma}
  .
  \end{split}     \end{equation}
  Next, we derive a temporal regularity estimate.
  Lemma~\ref{2lem:Dvw:eq},
  the triangle inequality,
  the Burkholder-Davis-Gundy type inequality in \cite[Lemma 7.7]{dz92}, 
  H\"older's inequality (applied with $\tfrac{1}{p}=\tfrac{1}{3p}+\tfrac{1}{3p}+\tfrac{1}{3p}$),
  and
  \eqref{2eq:growth.Dmue}
  prove for all $u_1\in[s_1,T]$, $u_2\in[u_1,T]$ that
  \begin{equation}  \begin{split}
    &\left\|D_{s_1,u_2}^{x_1,h_1}(v_1,w_1)-D_{s_1,u_1}^{x_1,h_1}(v_1,w_1)
    \right\|_{L^{p}(\P;H)}
    \\&
    \leq\int_{u_1}^{u_2}\left\|
    \left\|
      \mu' \Big( X_{s_1,r}^{x_1} \Big)
      \right\|_{L(H,H)}
      \|D_{s_1,r}^{x_1,h_1}(v_1,w_1) \|_H
      \right\|_{L^{p}(\P;\R)}
      \,dr
    \\&\quad
    +\left(\tfrac{p(p-1)}{2}\int_{u_1}^{u_2}\left\|
    \left\|
      \sigma' \Big( X_{s_1,r}^{x_1 } \Big)
      \right\|_{L(H,\HS(U,H))}
      \|D_{s_1,r}^{x_1,h_1}(v_1,w_1) \|_H
      \right\|_{L^{p}(\P;\R)}^2\,dr
    \right)^{\frac{1}{2}}
    \\&\quad+
    \int_{u_1}^{u_2}
    \left\|
    \int_0^1
      \mu'' \Big( \lambda X_{s_1,r}^{x_1+w_1h_1}
        +(1-\lambda)X_{s_1,r}^{x_1}\Big)
        \,d\lambda
      \Big(D_{s_1,r}^{x_1,h_1}(w_1),
      \tfrac{\partial}{\partial x}X_{s_1,r}^{x_1+w_1h_1}(v_1)
      \Big)
      \right\|_{L^{p}(\P;H)}
      \,dr
    \\&\quad
    +\bigg(\tfrac{p(p-1)}{2}\int_{u_1}^{u_2}\Big\|
    \int_0^1
      \sigma'' \Big( \lambda X_{s_1,r}^{x_1+w_1h_1}
        +(1-\lambda)X_{s_1,r}^{x_1}\Big)
        \,d\lambda
     \\&\qquad\qquad\qquad\qquad
     \cdot
      \Big(D_{s_1,r}^{x_1,h_1}(w_1),
      \tfrac{\partial}{\partial x}X_{s_1,r}^{x_1+w_1h_1}(v_1)
      \Big)
      \Big\|_{L^{p}(\P;\HS(U,H))}^2\,dr
    \bigg)^{\frac{1}{2}}
    \\&
\leq
    \sup_{r\in[s_1,T]}
    \left\|
    c\left(2+2V_0(X_{s_1,r}^{x_1+w_1h_1})+2V_0(X_{s_1,r}^{x_1})\right)^{\gamma}
    \right\|_{L^{3p}(\P;\R)}
    \sqrt{u_2-u_1}
    \cdot\Big(\sqrt{T}+\sqrt{\tfrac{p(p-1)}{2}}\Big)
\\&\quad
    \cdot\Big(
    \sup_{r\in[s_1,T]}
   \left\|
      D_{s_1,r}^{x_1,h_1}(v_1,w_1)
    \right\|_{L^{3p}(\P;H)}
    +
    \sup_{r\in[s_1,T]}
    \big(
   \left\|
      D_{s_1,r}^{x_1,h_1}(w_1)
    \right\|_{L^{3p}(\P;H)}
   \left\|
      \tfrac{\partial}{\partial x}X_{s_1,r}^{x_1+w_1h_1}(v_1)
    \right\|_{L^{3p}(\P;H)}
    \big)
    \Big).
  \end{split}     \end{equation}
  This, the triangle inequality,
  Lemma~\ref{l:exp_mom.moments},
  \eqref{eq:afterFatou1},
  and
  \eqref{eq:fromLemma32}
  yield that
  \begin{equation}  \begin{split}
    &\left\|D_{s_1,t_1}^{x_1,h_1}(v_1,w_1)-D_{s_1,t_2}^{x_1,h_1}(v_1,w_1)
    \right\|_{L^{p}(\P;H)}
    \\&
    \leq \sqrt{|t_1-t_2|}\big(\sqrt{T}+p\big)
    c2^\gamma e^{\alpha_0 T\gamma}
    \Big(6p\gamma+\int_{s_1}^T\tfrac{2\beta_0}{e^{\alpha_0r}}\,dr
    +\sum_{\iota\in\{0,1\}}V_0(x_1+\iota w_1h_1)\Big)^{\gamma}
  \\&\quad
  \cdot
  \Big(
\|{v}_1\|_H
  \|w_1\|_H\sqrt{T-s_1}
  +
\|v_1\|_H \|w_1\|_H
 \Big)
  \\&\quad
  \cdot
  \Biggl[
    e^{2\alpha_0 \gamma T}
    \left(\tfrac{12p(1+\theta)^3(1+\delta)^2(1+\gamma)}{\min\{\delta,1\}}
    +2\sqrt{T}
    +\smallint_0^T \tfrac{2\beta_0}{e^{\alpha_0 u}}\,du
    +2\max_{\iota\in\{0,1\}}
    V_0(x_1+\iota h_1 w_1)
  \right)^{2\gamma+2}
  \\&\quad
  \cdot
  (1+c)^2\max_{\iota\in\{0,1\}}
  \exp\!\left(
        3\int_{0}^{T}  
        \big[
        \constFun(u)
        +
        \sum_{i=0}^1
        \tfrac{
          \beta_{ i } 
        }{
          q_{i} e^{ \alpha_{ i } u }
        }
        \big]
        \,
        du
        +
        3\sum_{ i = 0 }^1 
        \tfrac{V_i(x_1+\iota h_1 w_1)}{ q_{i} } 
      \right)
    \Biggr]^2.
  \end{split}     \end{equation}
  This, the triangle inequality,
  and \eqref{eq:afterFatou2}
  yield that
  \begin{equation}  \begin{split}
    &\Big\|
     D_{s_1,t_1}^{x_1,h_1}(v_1,w_1)
     -D_{s_2,t_2}^{x_2,h_2}(v_2,w_2)
     \Big\|_{L^{p}(\P;H)}
    \\&\leq
    \Big\|
     D_{s_1,t_1}^{x_1,h_1}(v_1,w_1)
     -D_{s_1,t_2}^{x_1,h_1}(v_1,w_1)
     \Big\|_{L^{p}(\P;H)}
     +
    \Big\|
     D_{s_1,t_2}^{x_1,h_1}(v_1,w_1)
     -D_{s_2,t_2}^{x_2,h_2}(v_2,w_2)
     \Big\|_{L^{p}(\P;H)}
    \\&\leq
  (1+\max_{i\in\{1,2\}}\|w_i\|_H)^2(1+\|v_1\|_H)^2(1+4c)(p^2+T)
  \\&\cdot \left(\|x_1-x_2\|_H+\| w_1 h_1- w_2 h_2\|_H
  +\sqrt{|s_1-s_2|}
  +\sqrt{|t_1-t_2|}
  +\|w_1-w_2\|_H
  +\|v_1-v_2\|_H
  \right)
  \\&\cdot
    \Bigg[e^{2\alpha_0 \gamma T}
    \left(\tfrac{12p(1+\theta)^3(1+\delta)^2(1+\gamma)}{\min\{\delta,1\}}
    +2\sqrt{T}
    +\smallint_0^T \tfrac{2\beta_0}{e^{\alpha_0 u}}\,du
    +2\max_{\iota\in\{0,1\},j\in\{1,2\}}
    V_0(x_j+\iota {w}_j{h}_j)
  \right)^{2\gamma+2}
  \\&\quad
  \cdot
  (1+c)^2\max_{\iota\in\{0,1\},j\in\{1,2\}}
  \exp\!\left(
        3\int_{0}^{T}  
        \big[
        \constFun(u)
        +
        \sum_{i=0}^1
        \tfrac{
          \beta_{ i } 
        }{
          q_{i} e^{ \alpha_{ i } u }
        }
        \big]
        \,
        du
        +
        3\sum_{ i = 0 }^1 
        \tfrac{V_i(x_j+\iota {w}_j{h}_j)}{ q_{i} } 
      \right)\Bigg]^{5}
   \\&\quad\cdot
  \max_{\iota\in\{0,1\},i\in\{1,2\}}
   4^{\gamma}e^{\alpha_i T\gamma}\Big(\tfrac{p(1+\theta)(1+\delta)\gamma}{\delta}+\int_{s_i}^{T}\tfrac{\beta_i}{e^{\alpha_iu}}\,du
   +V_0({x_i+\iota w_ih_i})\Big)^{\gamma}
  .
  \end{split}     \end{equation}
  This completes the proof of Lemma~\ref{2l:local.Lip.C2}.
\end{proof}
\subsection{Existence of a $C^2$-solution}
The following theorem establishes existence of twice
continuously
differentiable
solutions of SDEs.

\begin{theorem}[Existence of a $C^2$-solution] \label{thm:C2}
Let $( H, \left< \cdot , \cdot \right>_H, \left\| \cdot \right\|_H )$ and $( U, \left< \cdot , \cdot \right>_U, \left\| \cdot \right\|_U )$
be separable $\R$-Hilbert spaces,
assume that $\dim(H)<\infty$,
let
$ T \in (0,\infty) $,
let $(\Omega, \F, \P, (\mathbb{F}_{t})_{t\in [0,T]})$ be a filtered probability space
satisfying the usual conditions,
let 
$
  (W_t)_{t\in[0,T]}
$
  be an $\textup{Id}_U$-cylindrical $(\mathbb{F}_t)_{t\in[0,T]}$-Wiener process,
let $\Delta_T =\{(s,t)\in [0,T]^2 \colon s \leq t\}$,
let $O \subseteq H$ be an open and convex set,
let $ \mu \in C^2(O,H)$, 
$ \sigma \in C^2(O, \HS(U,H))$,
for all $s\in [0,T]$, $x\in O$
let $ X^x_{s,\cdot} \colon [s,T] \times \Omega \to O $
be an $(\mathbb{F}_t)_{t\in[s,T]}$-adapted stochastic process
with continuous sample paths
which satisfies that for all $t\in [s,T]$ it holds a.s.~that
\begin{align} \label{thm2eq:def:X}
  X^x_{s,t} = 
  x
  + \int_s^{ t } \mu(X^x_{s,r} ) \, dr
  +
  \int_s^{ t } \sigma(X^x_{s,r} ) \, dW_r,
\end{align}
let $ \alpha_0,\alpha_1,\beta_0,\beta_1 ,c\in [0,\infty)$,
$ 
  V_0 
,
  V_1  \in C^{ 2 }( O , [0,\infty) ) 
$,
let
$ 
  \bar{V} \colon [0,T] \times O \to [0,\infty)
$
be a measurable function which satisfies
for all
$ i \in \{ 0, 1 \} $,
$t\in[0,T]$,
$x\in O$
that 
$\P\big(	\int_t^T | \bar{V}(r, X_{t,r}^x) | \,dr <\infty\big)=1$ and
\begin{equation}
\begin{split}
  &\Big\langle
  \mu( x )
  ,
  (\nabla V_i)(x)
  \Big\rangle_H
  +
  \tfrac{ 1 }{ 2 }
  \operatorname{trace}\!\Big(
    \sigma(x) [\sigma(x)]^* 
    ( \operatorname{Hess} V_i )( x )
  \Big)
  \\&
  +
  \tfrac{ 
    1
  }{ 
    2 
    e^{ 
      \alpha_{ i } {t} 
    }
  }
    \|
      \sigma( x )^* ( \nabla V_{ i } )( x )
    \|_U^2
  +
  \mathbbm{1}_{
    \{ 1 \}
  }(i)
  \cdot
  \bar{V}(t,x)
\leq
  \alpha_{ i } V_{ i }(x)
  +
  \beta_{ i },
\end{split}
\end{equation}
let 
$
  \constFun \colon [0,T] \to [0,\infty)
$
be a measurable function which satisfies that
$\int_0^T \constFun(r) \, dr <\infty$,
let $p\in(2\dim(H)+6,\infty)$, $\theta\in[0,\infty)$, $\delta\in(0,\infty)$, $q_0,q_1\in(0,\infty]$
satisfy that $\tfrac{\theta}{6p(1+\theta)^3(1+\delta)^2}=\tfrac{1}{q_0}+\tfrac{1}{q_1}$,
assume that for all $t\in[0,T]$, $x,y\in O$, $v\in H\setminus\{0\}$
it holds that
\begin{equation}  \begin{split}\label{thm2eq:Lip.ass.strong}
  &\Big\langle v,\smallint_0^1\mu'(\lambda x+(1-\lambda)y)+\delta\,d\lambda \,\,v
  \Big\rangle_H
  +\tfrac{1+\delta}{2}
  \Big\|\smallint_0^1\sigma'(\lambda x+(1-\lambda)y)\,d\lambda \,\,v
  \Big\|_{\HS(U,H)}^2
  \\&
  +
  \tfrac{(3p(1+\theta)^3(1+\delta)^2-1)\big\|
  \big\langle v,\smallint_0^1\sigma'(\lambda x+(1-\lambda)y)\,d\lambda\,\, v
  \big\rangle_H\big\|_{\HS(U,\R)}^2}{\|v\|_H^2}
  \leq \|v\|^2_H\cdot\Big(\phi(t)
  +\tfrac{V_0(x)+V_0(y)}{2q_0T e^{\alpha_0t}}
  +\tfrac{\bar{V}(t,x)+\bar{V}(t,y)}{2q_1e^{\alpha_1t}}
  \Big),
\end{split}     \end{equation}
let $\gamma\in[\tfrac{1}{p},\infty)$ satisfy that for all $x\in O$ it holds that
\begin{align}
\begin{split}
 \max\left\{ \|\mu(x)\|_H , \|\sigma(x)\|_{\HS(U,H)} \right\}
 \leq  c(1+V_0(x))^{\gamma},
\end{split}
\end{align}
satisfy that for all $x,y\in O$, $i\in\{1,2\}$ it holds that
\begin{align} \label{thm2eq:growth.Dmue}
\begin{split}
 &\max\left\{
    \left\|\smallint_0^1D^i\mu\big(\lambda x+(1-\lambda)y\big)\,d\lambda 
    \right\|_{L^{(i)}(H,H)} ,
    \left\|\smallint_0^1D^i\sigma\big(\lambda x+(1-\lambda)y\big)\,d\lambda 
    \right\|_{L^{(i)}(H,\HS(U,H))}
 \right\}
 \\&
 \leq  c\left(2+V_0(x)+V_0(y)\right)^{\gamma},
\end{split}
\end{align}
and satisfy that for all $x_1,x_2,x_3,x_4\in O$,
$i\in\{1,2\}$
it holds that
\begin{equation}  \begin{split}\label{thm2eq:Dmu.localLip2}
  &\max\Big\{\big\|\smallint_0^1 
  D^i\mu(\lambda x_1+(1-\lambda)x_2)-D^i\mu(\lambda x_3+(1-\lambda)x_4)
  \,d\lambda
  \big\|_{L^{(i)}(H,H)},
  \\&\qquad\quad
  \big\|\smallint_0^1
     D^i\sigma(\lambda x_1+(1-\lambda)x_2)
        -D^i\sigma(\lambda x_3+(1-\lambda)x_4)
   \,d\lambda
  \big\|_{L^{(i)}(H,\HS(U,H))}
  \Big\}
  \\&
  \leq c\smallint_0^1\lambda\|x_1-x_3\|_H+(1-\lambda)\|x_2-x_4)\|_H
  \,d\lambda\,
  \big(4+\smallsum_{j=1}^4 V_0(x_i)\big)^{\gamma}.
\end{split}     \end{equation}
Then there exists a measurable
function
$
  \mathcal{X} \colon \Delta_T \times {O} \times \Omega \to \overline{O}
$
such that
\begin{enumerate}[(i)]
  \item  
for all 
$
  x \in O, 
$ 
$ 
	s\in [0,T]
$ 
it holds a.s.~that 
$
  (\mathcal{X}^x_{s, t })_{t \in [s,T]} 
  = 
  (X^x_{s,t})_{t \in [s,T]}
$,
and
\item
for every $ \omega \in \Omega $
it holds
that
$
  \mathcal{X}(\omega) \in C^{0,2}( \Delta_T \times {O}, \overline{O})
$.
\end{enumerate}
\end{theorem}
\begin{proof}[Proof of Theorem~\ref{thm:C2}]
Without loss of generality we additionally assume throughout this proof
that $\dim(H)\geq 1$ and that $O\neq\emptyset$.
Throughout this proof
let $d\in\N$, $\mathbb{H}\subseteq H$ satisfy that $d=\dim(H)$
and that $\mathbb{H}$ is an orthonormal basis of $H$,
let $O^{\R}$, $O_0^{\R}$
be the sets which satisfy that
$O^{\R}=\cap_{v\in\mathbb{H}}
\{(x,h)\in
O\times\R\colon x+vh\in O\}$
and $O_0^{\R}=\{(x,h)\in O^{\R}\colon h\neq 0\}$,
and
let $K_n\subseteq \Delta_T\times H\times \R$,
$n\in \N$,
be the sets which satisfy for all $n\in \N$ that
$K_n=\{(s,t,x,h)\in \Delta_T\times O_0^{\R} \colon s^2+t^2+\|x\|_H^2+h^2 \leq n^2, \inf(\{\|x-y\|\colon y\in O^c\}\cup\{2\})\leq\frac{1}{n} \}$.
 The fact that $p>2d+6$
 and Theorem~\ref{thm:C1}
 yield that there exists a
  measurable
  function $\tilde{\mathcal{X}}\colon\Delta_T\times O\times\Omega\to\overline{O}$
  such that
for all 
$
  x \in O, 
$ 
$ 
	s\in [0,T]
$ 
it holds a.s.~that 
$
  (\tilde{\mathcal{X}}^x_{s, t })_{t \in [s,T]} 
  = 
  (X^x_{s,t})_{t \in [s,T]}
$, 
and such that
for every $ \omega \in \Omega $
it holds
that
$
  \tilde{\mathcal{X}}(\omega) \in C^{0,1}( \Delta_T \times O, \overline{O})
$.
For the rest of this proof, for all $(s,t)\in\Delta_T$, $x\in O$, $v,w\in H$, $h\in\R\setminus\{0\}$ with $x+hw \in O$
let $D_{s,t}^{x,h}(v,w)\colon\Omega\to H$ be the function which satisfies that
\begin{equation}  \begin{split}
  D_{s,t}^{x,h}(v,w)&= 
  \frac{\frac{\partial}{\partial x}\tilde{\mathcal{X}}_{s,t}^{x+h w}(v) -\frac{\partial}{\partial x}\tilde{\mathcal{X}}_{s,t}^{x}(v)}{h}.
\end{split}     \end{equation}
Then
Lemma~\ref{2l:local.Lip.C2}, the fact that $\int_0^T \constFun(r) \, dr <\infty$,
and
boundedness of the functions $V_0,V_1$ on each of the 
relatively compact
subsets $\{x\in O\colon \exists s,t,h\in\R \text{ s.t.\ }(s,t,x,h)\in K_n\}\subseteq O$,
$n\in\N$,
demonstrate for all $n\in\N$, $v,w\in\mathbb{H}$ that
\begin{equation}\label{thmeq:C0.ass.Hoelder2} 
  \sup\bigg(\bigg\{
  \tfrac{ 
  \big(\E\big[  \big\|D_{s_1,t_1}^{x_1,h_1}(v,w) - D_{s_2,t_2}^{x_2,h_2}(v,w)   \big\|^{p}_{H}\big]\big)^{\frac{1}{p}}
  }
  {\left( |s_1-s_2|^2+|t_1-t_2|^2+\|x_1-x_2\|_H^{2}+|h_1-h_2|^2
  \right)^{\frac{1}{4}}}
  \colon
  \substack{ (s_1,t_1,x_1,h_1), (s_2,t_2,x_2,h_2) \in K_n \colon
  \\(s_1,t_1,x_1,h_1)\neq (s_2,t_2,x_2,h_2)}
  \bigg\}\cup\{0\}\bigg)
  <\infty.
\end{equation}
In particular this implies for all $n\in\N$, $v,w\in\mathbb{H}$ that
\begin{equation}  \begin{split}
  &\sup
  \Big(\Big\{
  \Big(\E\Big[  \|D_{s,t}^{x,h}(v,w)   \|^{p}_{H}\Big]\Big)^{\frac{1}{p}}
  \colon
  (s,t,x,h) \in  K_n\Big\}\cup\{0\}\Big)
  \\&
  \leq
  \sup
  \bigg(\bigg\{
  \tfrac{ 
  \big(\E\big[  \|D_{s,t}^{x,h}(v,w) - D_{s,s}^{x,h}(v,w)   \|^{p}_{H}\big]
  \big)^{\frac{1}{p}}
  }
  {\left( |s-s|^2+|t-s|^2+\|x-x\|_H^{2}+|h-h|^2  \right)^{\frac{1}{4}}}
  \sqrt{T}
  \colon
  (s,t,x,h) \in  K_n,t\neq s\bigg\}\cup\{0\}\bigg)
  <\infty.
\end{split}     \end{equation}
This, \eqref{thmeq:C0.ass.Hoelder2},
Proposition~\ref{p:KolChen}
(applied for every $v,v\in\mathbb{H}$ with 
$H \defeq  \R\times\R\times H\times \R$,
$D\defeq \Delta_T\times O_0^{\R}$,
$E\defeq H$,
$F\defeq H$,
$p\defeq p$,
$\alpha\defeq \nicefrac12$,
$X\defeq \left( \Delta_T\times O_0^{\R} \ni (s,t,x,h)
\mapsto D_{s,t}^{x,h}(v,w) \in H \right)$
in the notation of Proposition~\ref{p:KolChen}),
and path continuity of $D_{s,\cdot}^{x,h}(v,w)$, 
$(s,x,h,v,w)\in[0,T]\times O_0^{\R}\times\mathbb{H}\times\mathbb{H}$,
establish for all $v,w\in\mathbb{H}$ the existence of a
measurable function
$
  \mathcal{D}^{v,w} \colon \overline{\Delta_T\times O_0^{\R}}
  \times \Omega \to H
$
which satisfies that for all $\omega \in \Omega$ it holds that 
$\mathcal{D}^{v,w}(\omega) \in C(\overline{\Delta_T\times O_0^{\R}} , H)$
and which satisfies
that
for all $(s,t,x,h)\in\Delta_T\times O_0^{\R}$
it holds a.s.~that
$(\mathcal{D}_{s,t}^{v,w}(x,h))_{t\in[s,T]}=(D_{s,t}^{x,h}(v,w))_{t\in[s,T]}$.
Note that $O^{\R}\subseteq\overline{O_0^{\R}}$.
Let $\mathcal{D}\colon \Delta_T\times
O^{\R}\times H\times H\times\Omega
\to H$ be the function which satisfies that for all
$(s,t,x,h)\in\Delta_T\times O^{\R}$, $v,w\in H$
it holds that
$\mathcal{D}_{s,t}(x,h,v,w)=\sum_{e,\tilde{e}\in\mathbb{H}}\langle v,e\rangle_H\langle w,\tilde{e}\rangle_H
\mathcal{D}_{s,t}^{e,\tilde{e}}(x,h)$.
Next, we observe that for all $(s,x,h,v,w)\in[0,T]\times O_0^{\R}\times \mathbb{H}\times \mathbb{H}$
 it holds a.s.\ for all $t\in[s,T]$ that
\begin{equation}  \begin{split}
  \mathcal{D}_{s,t}(x,h,v,w)=
\mathcal{D}_{s,t}^{v,w}(x,h)=
D_{s,t}^{x,h}(v,w)=
  \frac{\frac{\partial}{\partial x}\tilde{\mathcal{X}}_{s,t}^{x+h w}(v) -\frac{\partial}{\partial x}\tilde{\mathcal{X}}_{s,t}^{x}(v)}{h}
.
\end{split}     \end{equation}
This, continuity of the random fields $\tfrac{\partial}{\partial x}\tilde{\mathcal{X}}$, $\mathcal{D}$,
and Lemma~\ref{lem:gradient} 
(applied for all $v\in\mathbb{H}$ with $U\defeq H$,
$T\defeq \Delta_T$,
$\mathbb{T}\defeq \Delta_T\cap\Q^2$,
$\mathcal{X}\defeq (\Delta_T\times O\times\Omega\ni(s,t,x,\omega)\mapsto \tfrac{\partial}{\partial x}\tilde{\mathcal{X}}_{s,t}^x(v,\omega)\in H)$,
$\mathcal{Z}\defeq (\Delta_T\times O^{\R}\times H\times\Omega\ni(s,t,x,h,w)\mapsto \mathcal{D}_{s,t}(x,h,v,w,\omega)\in H)$
in the notation of Lemma~\ref{lem:gradient})
prove that there exists $\Omega_0\in\mathcal{F}$
such that $\P(\Omega_0)=1$ and such that for all $\omega\in\Omega_0$,
$(s,t)\in\Delta_T$, $v\in\mathbb{H}$ it holds that
the mapping $O\ni x\mapsto \tilde{\mathcal{X}}_{s,t}^{x}(v,\omega)\in H$
is continuously differentiable and it holds for all $x\in O$, $v\in\mathbb{H}$, $w\in H$
that
\begin{equation}  \begin{split}
\tfrac{\partial}{\partial x}\big(\tfrac{\partial}{\partial x}\mathcal{\tilde{X}}_{s,t}^x(v,\omega)\big)(w)
=\sum_{h\in\mathbb{H}}\langle w,h\rangle_H \mathcal{D}_{s,t}(x,0,v,h,\omega)
=\mathcal{D}_{s,t}(x,0,v,w,\omega).
\end{split}     \end{equation}
This and continuity of $\mathcal{D}$ prove that for all $\omega\in\Omega_0$
it holds that $\tilde{\mathcal{X}}(\omega)\in C^{0,2}(\Delta_T\times O,\overline{O})$.
Let $\mathcal{X}\colon\Delta_T\times{O}\times\Omega
\to \overline{O}$ be the function which satisfies for all $(s,t,x,\omega)\in\Delta_T\times{O}\times\Omega$ that
$\mathcal{X}_{s,t}^x(\omega)=\mathbbm{1}_{\Omega_0}(\omega)\tilde{\mathcal{X}}_{s,t}^x(\omega)$.
Then it holds
that $\mathcal{X}$ is
measurable,
that
for all 
$
  x \in O 
$,
$ 
	s\in [0,T]
$ 
it holds a.s.~that 
$
  (\mathcal{X}^x_{s, t })_{t \in [s,T]} 
  = 
  (\tilde{\mathcal{X}}^x_{s, t })_{t \in [s,T]} 
  =
  (X^x_{s,t})_{t \in [s,T]}
$, 
and
that
for every $ \omega \in \Omega $
it holds
that
$
  \mathcal{X}(\omega) \in C^{0,2}( \Delta_T \times {O}, {\overline{O}})
$.
 This proves items (i) -- (iii)
 and finishes the proof of Theorem~\ref{thm:C2}.
\end{proof}
The following corollary simplifies the assumptions
of Theorems \ref{thm:C2}.
\begin{cor}[Existence of a $C^2$-solution] \label{c:C2}
  Let $d,m\in\N$,
  $ \alpha,\beta \in [0,\infty) $, $T,c\in(0,\infty)$, $p\in(6(d+3)(1+1/c)^3,\infty)$,
  let $\|\cdot\|$, $\langle,\rangle$ denote the standard norm and the standard scalar product on $\R^d$,
  let $\|\cdot\|_{\R^m}$ denote the standard norm on $\R^m$,
  let $\|\cdot\|_{\textup{F}}$ denote the Frobenius norm on $\R^{d\times m}$,
  let $(\Omega, \F, \P, (\mathbb{F}_{t})_{t\in [0,T]})$ be a filtered probability space
  satisfying the usual conditions,
  let 
  $
    W\colon[0,T]\times\Omega\to\R^m
  $
  be a standard $(\mathbb{F}_t)_{t\in[0,T]}$-Wiener process,
  let $ \mu \in C^3(\R^d,\R^d)$, 
  $ \sigma \in C^3(\R^d, \R^{d\times m})$,
  $ 
    V  \in C^{ 2 }( \R^d , [0,\infty) ) 
  $,
  $ \bar{V}\in C(\R^d,[0,\infty) )$,
  assume that $\mu'''$ and $\sigma'''$ grow at most polynomially at infinity,
  assume that $\exists \gamma\in(0,\infty)\colon \sup_{x\in\R^d}\|x\|^{\gamma}/(1+V(x))<\infty$,
  and assume for all $x,y,v\in \R^d$ that
  {\small
  \begin{equation}\begin{split}\label{eq:C2.conditions}
    &\Big\langle v,\smallint_0^1\mu'\big(\lambda (x-y)+y\big)\,d\lambda \,\,v
    \Big\rangle
    +\tfrac{p-1}{2}
    \Big\|\smallint_0^1\sigma'\big(\lambda (x-y)+y\big)\,d\lambda \,\,v
    \Big\|_{\textup{F}}^2
  \leq \|v\|^2\cdot\Big(c
    +\tfrac{V(x)+V(y)}{4c p T e^{\alpha T}}
    +\tfrac{\bar{V}(x)+\bar{V}(y)}{4c p e^{\alpha T}}
    \Big),
\\
    &\Big\langle
  \mu( x )
  ,
  (\nabla V)(x)
  \Big\rangle
  +
  \tfrac{ 1 }{ 2 }
  \operatorname{trace}\!\Big(
    \sigma(x) [\sigma(x)]^* 
    ( \operatorname{Hess} V )( x )
  \Big)
  +
  \tfrac{ 
    1
  }{ 
    2 
  }
    \|
      \sigma( x )^* ( \nabla V )( x )
    \|_{\R^m}^2
  +
  \bar{V}(x)
\leq
  \alpha V(x)
  +
  \beta.
\end{split}
\end{equation}}
Then there exists a measurable
function
$
  X \colon \{(s,t)\in[0,T]\colon s\leq t\} \times \R^d \times \Omega \to \R^d
$
such that
\begin{enumerate}[(i)]
\item
for every $ \omega \in \Omega $
it holds
that
$
  X(\omega) \in C^{0,2}( \{(s,t)\in[0,T]\colon s\leq t\} \times \R^d, \R^d)
$ and
  \item  
for all 
$
  x \in \R^d 
$,
$ 
	s\in [0,T]
$,
$t\in[s,T]$
it holds a.s.~that 
\begin{equation}  \begin{split}
  X^x_{s,t} = 
  x
  + \int_s^{ t } \mu(X^x_{s,r} ) \, dr
  +
  \int_s^{ t } \sigma(X^x_{s,r} ) \, dW_r.
\end{split}     \end{equation}
\end{enumerate}
\end{cor}
\begin{proof}
  The assumption that
  $\mu'''$ and $\sigma'''$ grow at most polynomially at infinity
  and the assumption
  $\exists \gamma\in(0,\infty)\colon \sup_{x\in\R^d}\|x\|^{\gamma}/(1+V(x))<\infty$
  ensure that there exist $\gamma,\tilde{c}\in[1,\infty)$ such that for all $x\in\R^d$ it holds that
  \begin{align}
  \begin{split}
   \max\left\{ \|\mu(x)\| , \|\sigma(x)\|_{\textup{F}} \right\}
   \leq  \tilde{c}(1+V(x))^{\gamma},
  \end{split}
  \end{align}
  such that for all $x,y\in \R^d$, $i\in\{1,2\}$ it holds that
  \begin{align} \label{c2eq:growth.Dmue}
  \begin{split}
   &\max\left\{
      \left\|\smallint_0^1D^i\mu\big(\lambda x+(1-\lambda)y\big)\,d\lambda 
      \right\|_{L^{(i)}(\R^d,\R^d)} ,
      \left\|\Big\|\smallint_0^1D^i\sigma\big(\lambda x+(1-\lambda)y\big)\,d\lambda 
      \Big\|_{\textup{F}}\right\|_{L^{(i)}(\R^d,\R)}
   \right\}
   \\&
   \leq  \tilde{c}\left(2+V(x)+V(y)\right)^{\gamma},
  \end{split}
  \end{align}
  and such that for all $x_1,x_2,x_3,x_4\in \R^d$,
  $i\in\{1,2\}$
  it holds that
  \begin{equation}  \begin{split}\label{c2eq:Dmu.localLip2}
    &\max\Big\{\big\|\smallint_0^1 
    D^i\mu(\lambda x_1+(1-\lambda)x_2)-D^i\mu(\lambda x_3+(1-\lambda)x_4)
    \,d\lambda
    \big\|_{L^{(i)}(\R^d,\R^d)},
    \\&\qquad\quad
    \Big\|\big\|\smallint_0^1
       D^i\sigma(\lambda x_1+(1-\lambda)x_2)
          -D^i\sigma(\lambda x_3+(1-\lambda)x_4)
     \,d\lambda
    \big\|_{\textup{F}}\Big\|_{L^{(i)}(\R^d,\R)}
    \Big\}
    \\&
    \leq \tilde{c}\smallint_0^1\lambda\|x_1-x_3\|+(1-\lambda)\|x_2-x_4)\|
    \,d\lambda\,
    \big(4+\smallsum_{j=1}^4 V(x_i)\big)^{\gamma}.
  \end{split}     \end{equation}
  Moreover, let $\delta\in(0,p)$, $\tilde{p}\in(2d+6,\infty)$
  satisfy that $1+\delta+6\tilde{p}(1+\frac{1}{c})^3(1+\delta)^2-2=p-1$.
  In addition, local Lipschitz continuity of $\mu$, $\sigma$, and 
  the Lyapunov condition in \eqref{eq:C2.conditions}
  yield existence of a global solution of the SDE with drift coefficient $\mu$
  and diffusion coefficient $\sigma$; cf., e.g., \cite{GyoengyKrylov1996}.
  The assertion follows then from Theorem \ref{thm:C2}
  (applied with
   $H\defeq \R^d$,
   $U\defeq \R^m$,
   $O\defeq \R^d$,
   $\alpha_0\defeq\alpha$,
   $\alpha_1\defeq\alpha$,
   $\beta_0\defeq \beta$,
   $\beta_1\defeq \beta$,
   $c\defeq\tilde{c}$,
   $V_0\defeq V$,
   $V_1\defeq V$,
   $\phi\defeq ([0,T]\ni t\mapsto c\in[0,\infty))$,
   $p\defeq\tilde{p}$,
   $\theta\defeq 1/c$,
   $q_0\defeq 2c(p-\delta)$,
   $q_1\defeq 2c(p-\delta)$
   in the notation of Theorem \ref{thm:C2}).
   This completes the proof of Corollary~\ref{c:C2}.
\end{proof}

\subsection*{Acknowledgement}
This project has been partially supported by the Deutsche Forschungsgesellschaft (DFG) via RTG 2131 \textit{High-dimensional Phenomena in Probability -- Fluctuations and Discontinuity}.

\addcontentsline{toc}{section}{Bibliography}

\end{document}